\documentclass[10pt]{amsart}

\usepackage[english]{babel}
\usepackage{amsthm}
\usepackage{amssymb}
\usepackage{graphicx}
\usepackage{subfigure}
\usepackage{geometry} 
\usepackage[all]{xy}
\geometry{a4paper} % or letter or a5article or ... etc
\usepackage{mathdots} %for diagonal dots!

%for figures
\usepackage{tikz}
\usetikzlibrary{matrix,arrows,calc}
\usetikzlibrary{positioning}
 \usepackage[section]{placeins}%keeping figures within sections

\usepackage{csquotes}
\usepackage{mathabx}
\usepackage{paralist}
\usepackage{makecell}% legend for table

\newtheorem{prop}{Proposition}[section]
\newtheorem{thm}[prop]{Theorem}
\newtheorem{cor}[prop]{Corollary}
\newtheorem{lem}[prop]{Lemma}
\newtheorem{ex}[prop]{Example}
  
\theoremstyle{definition}
\newtheorem{rem}[prop]{Remark}

\newtheorem{defin}[prop]{Definition}

%modification if mod

%vertical equality
\newcommand{\veq}{\mathrel{\rotatebox{90}{$=$}}}

\newcommand{\abs}[1]{\ensuremath{\left\vert#1\right\vert}}
%restriction \restr{f)(A)
\newcommand\restr[2]{{% we make the whole thing an ordinary symbol
  \left.\kern-\nulldelimiterspace % automatically resize the bar with \right
  #1 % the function
  \vphantom{|} % pretend it's a little taller at normal size
  \right|_{#2} % this is the delimiter
  }}

\usepackage{chngcntr}
\counterwithin{figure}{section}

\usepackage{float}

\setlength{\textheight}{220mm}
\setlength{\textwidth}{150mm}

\definecolor{myblue}{cmyk}{1.00,0.56,0.00,0.34}
\definecolor{mygreen}{cmyk}{0.5,0,0.5,0.5}
\definecolor{myred}{cmyk}{0.00,1.00,0.63,0.00}
\definecolor{myyellow}{cmyk}{0.00,0.15,1.00,0.00}

\flushbottom

\begin{document}

\title{Arithmetic infinite friezes from punctured discs}
\author{Manuela Tschabold}

%%%%%%%%%
\begin{abstract}
%%%%%%%%%

We define the notion of infinite friezes of positive integers as a variation of Conway-Coxeter frieze patterns and study their properties. We introduce useful gluing and cutting operations on infinite friezes. It turns out that triangulations of once-punctured discs give rise to periodic infinite friezes having special properties, a notable example being that each diagonal consists of a collection of arithmetic progressions. Furthermore, we work out a combinatorial interpretation of the entries of infinite friezes associated to triangulations of once-punctured discs via matching numbers for certain combinatorial objects, namely periodic triangulations of strips. Alternatively, we consider a known algorithm that as we show computes as well these entries.
\end{abstract}

\maketitle

%%%%%%%%%
%
\section*{Introduction} \let\thefootnote\relax\footnotetext{\emph{Key words and phrases:} Infinite frieze, frieze pattern, arithmetic progression,  triangulation, matching number, cluster algebra.}
%
%%%%%%%%%

Frieze patterns in mathematics were introduced and studied in \cite{{CCI}} by Conway and Coxeter. These are patterns of $n$ bi-infinite rows of positive integers, bounded from above and below by a row of $0$'s followed by a row of $1$'s, whose entries satisfy a local rule. More precisely, the rows are shifted such that the entries form a diamond pattern, where every such diamond
\begin{center}
\begin{tikzpicture}[font=\normalsize] 
  \matrix(m) [matrix of math nodes,row sep={1em,between origins},column sep={1em,between origins},nodes in empty cells]{
&b&\\
a&&d\\
&c&\\
     };
\end{tikzpicture}
\end{center}
satisfies $ad-bc=1$. Such patterns are periodic in the horizontal direction and also invariant under a glide reflection. In this article we refer to them as \emph{finite friezes}. There are several connections between finite friezes and classical objects in mathematics. A well known correspondence between finite friezes and triangulated polygons, first conjectured in \cite{{C}} and proved in \cite{{CCI}}, is that every finite frieze arises from a triangulated polygon and vice versa, providing a geometric interpretation of the first non-trivial row of a finite frieze via matching numbers between vertices and triangles. Later this was extended for all entries in a finite frieze by Broline, Crowe and Issacs in \cite{{BCI}}.

By the work of Caldero and Chapoton in \cite{{CC}}, finite friezes are closely related to Fomin-Zelevinsky cluster algebras of type $A$. This fact serves as motivation for the work in this article. Various generalizations of finite friezes have been introduced and studied recently, providing new information about cluster algebras, for example frieze patterns of type $D$ in \cite{{BM}}, frieze patterns from higher angulations \cite{{BHJ}}, friezes in \cite{{ARS}}, $\mathrm{SL}_2$-tilings in \cite{{ARS},{BR},{HJ}}, $2$-frieze patterns in \cite{{MOT},{P}}. 

In this article, we generalize and extend the notion of finite friezes and introduce similar patterns of positive integers without the condition of bounding rows at the bottom. We call them \emph{infinite friezes}.  Various properties known for finite friezes can be adapted to infinite friezes. We shall present some of them. Unlike finite friezes, infinite friezes are not necessarily periodic. Our main result is that triangulations of once-punctured discs give rise to periodic infinite friezes, providing a connection between triangulations and infinite friezes. Moreover, for these particular periodic infinite friezes arising from triangulations of once-punctured discs, we are able to give a combinatorial interpretation of the numbers occurring in them via matching numbers. In \cite{BPT} we complete this work and obtain a characterization of infinite friezes via triangulations.

This article is organized as follows. In Section~\ref{secfriezes}, we introduce infinite friezes and give some useful relations between the entries in them (Lemma~\ref{lemrelationqsdiag}). We will focus on a special class of infinite friezes which are invariant under horizontal translation, called \emph{periodic infinite friezes}. In Section~\ref{seccutglue}, we define two algebraic operations on infinite friezes, namely \emph{gluing} and \emph{cutting} (Theorems~\ref{thmgluefrieze}, \ref{thmcutfrieze}). Moreover, we extend these operations to the periodic case by introducing $n$-gluing and $n$-cutting (Propositions~\ref{propnglueperiodicfrieze}, \ref{propncutperiodicfrieze}). The latter of which provides a powerful tool for inductive proofs. We begin Section~\ref{secarithmeticfriezes}, by recalling the definition of triangulations of once-punctured discs, after which we explain how particular periodic infinite friezes arise from such triangulations. More precisely, a triangulations of a once-punctured disc yields a sequence of non-negative integers in a similar manner as for triangulations of polygons \cite{{CCI}}. We call such sequences \emph{quiddity sequences}. We prove that these sequences arising from triangulations of once-punctured discs give rise to periodic infinite friezes (Theorem~\ref{thmtriangulationfrieze}). Furthermore, we establish a remarkable property of such periodic infinite friezes, namely that they exhibit (multiple) arithmetic progressions within each diagonal (where the number depends on the period), and are thus examples of so-called \emph{arithmetic friezes} (Proposition~\ref{proptriangulationfriezearithmetic}). In Section~\ref{secmachingnumbers}, we explain how triangulations of once-punctured discs correspond to periodic triangulations of a certain combinatorial structure we call the \emph{strip} (Theorem~\ref{thmbijection}). A similar model was introduced by Holm and J{\o}rgensen in \cite{{HJ}} in order to describe a certain family of $\mathrm{SL}_2$-\emph{tilings}, which are bi-infinite arrays of positive integers satisfying the same local rule as finite and infinite friezes. There is a subtle but significant difference between the two approaches, which we expand upon in the final section. In our approach, we generalize one of the main results of Broline, Crowe and Issacs from \cite{{BCI}}, giving a way of obtaining an arbitrary entry of an arithmetic frieze from the associated periodic triangulation of the strip via matching numbers (Theorem~\ref{thmmatchings}). The terminology of matching numbers was also used by Baur and Marsh (in \cite{{BM}}) to construct frieze pattern of type $D$. Finally, in Section~\ref{seclabelingalgorithm}, we give an alternative description of an arbitrary entry in an arithmetic frieze using a similar method of assigning labels to the vertices of a periodic triangulation of a strip to that used by Conway and Coxeter (in \cite{{CCI}}) for triangulation of polygons (Theorem~\ref{thmaltderscription}). This labeling algorithm also provides the common differences for the arithmetic progressions of an arithmetic frieze (Proposition~\ref{propcommondifferences}).

%%%%%%%%%
%
\section{Periodic infinite friezes}\label{secfriezes}
%
%%%%%%%%%

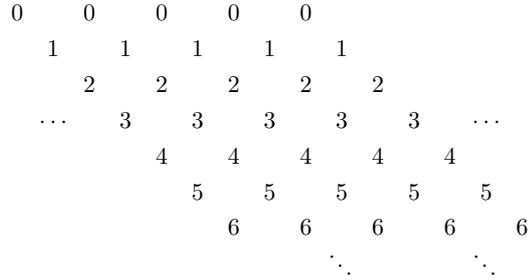
\begin{figure}[t]
\scalebox{.9}{\begin{tikzpicture}[font=\normalsize] 
  \matrix(m) [matrix of math nodes,row sep={1.5em,between origins},column sep={1.5em,between origins},nodes in empty cells]{
0&&0&&0&&0&&0&&&&&&\\
&1&&1&&1&&1&&1&&&&&\\
&&2&&2&&2&&2&&2&&&&\\
&\node{\cdots};&&3&&3&&3&&3&&3&&\node{\cdots};&\\
&&&&4&&4&&4&&4&&4&&\\
&&&&&5&&5&&5&&5&&5&\\
&&&&&&6&&6&&6&&6&&6\\
&&&&&&&&&\node[rotate=-6.5,shift={(-0.034cm,-0.08cm)}]  {\ddots};&&&&\node[rotate=-6.5,shift={(-0.034cm,-0.08cm)}]  {\ddots};&\\
};
\end{tikzpicture}}
\caption{The basic infinite frieze $\mathcal{F}_\ast$.}\label{figbasicfrieze}
\end{figure}

\begin{defin}\label{definffrieze}
An \emph{infinite frieze} $\mathcal{F}$ is an array $(m_{ij})_{i,j\in\mathbb{Z},j-i\ge -2}$ of shifted infinite rows of positive integers bounded at the top by a row filled with $0$'s, followed by a row of $1$'s, i.e.\ $m_{i,i-2}=0$, $m_{i,i-1}=1$ for all $i\in\mathbb{Z}$, and $m_{ij}>0$ otherwise,
\begin{center}
\begin{tikzpicture}[font=\normalsize] 
  \matrix(m) [matrix of math nodes,row sep={1.75em,between origins},column sep={1.75em,between origins},nodes in empty cells]{
&0&&0&&0&&0&&0&&&&&\\[-0.25em]
\node{\cdots};&&1&&1&&1&&1&&1&&\node{\cdots};&&\\[-0.25em]
&&&m_{-2,-2}&&m_{-1,-1}&&m_{00}&&m_{11}&&m_{22}&&&\\
&&\node{\cdots};&&m_{-2,-1}&&m_{-1,0}&&m_{01}&&m_{12}&&m_{23}&&\node{\cdots};\\
&&&&&m_{-2,0}&&m_{-1,1}&&m_{02}&&m_{13}&&m_{24}&\\
&&&&&&&&\node[rotate=-6.5,shift={(-0.034cm,-0.08cm)}]  {\ddots};&&&&\node[rotate=-6.5,shift={(-0.034cm,-0.08cm)}]  {\ddots};&&\\
     };
\end{tikzpicture}
\end{center}
\noindent such that the \emph{unimodular rule} is satisfied, i.e.\ for every diamond in $\mathcal{F}$ of the form
\begin{center}
\begin{tikzpicture}[font=\normalsize] 
  \matrix(m) [matrix of math nodes,row sep={1.75em,between origins},column sep={1.75em,between origins},nodes in empty cells]{
&m_{i+1,j}&\\
m_{ij}&&m_{i+1,j+1}\\
&m_{i,j+1}&\\
     };
\end{tikzpicture}
\end{center}
\noindent the relation $m_{ij}m_{i+1,j+1}-m_{i+1,j}m_{i,j+1}=1$ holds, where $j-i\geq -1$.
\end{defin}

The easiest example of an infinite frieze we may think of is the \emph{basic infinite frieze}\linebreak \mbox{$\mathcal{F}_\ast=(m_{ij})_{j-i\ge -2}$} with constant rows given by $m_{ij}=j-i+2$ as shown in Figure~\ref{figbasicfrieze}.

One can easily convince oneself that if an entry $1$ appears in a non-trivial row of an infinite frieze, it is not possible that it has a $1$ as a neighbor entry to the left, or right, respectively. Moreover, the product of the two neighboring entries of the entry $1$ is strictly bigger than $4$.

\begin{defin}\label{defquidseq}
For an infinite frieze $\mathcal{F}=(m_{ij})_{j-i\ge -2}$ its \emph{quiddity row} is the infinite sequence $(a_i)_{i\in\mathbb{Z}}$ of positive integers given by the first non-trivial row of $\mathcal{F}$, where $a_i=m_{ii}$.
\end{defin}

\begin{rem}
Clearly, two consecutive rows of an infinite frieze, except the first two rows, determine the rest of the infinite frieze, since we can fill the next row below and above these two by using the unimodular rule. It follows that an infinite frieze is determined by its quiddity row.
\end{rem}

\begin{figure}[b]
\resizebox{.9\linewidth}{!}{\begin{tikzpicture}[font=\normalsize] 
  \matrix(m) [matrix of math nodes,row sep={1.5em,between origins},column sep={1.5em,between origins},nodes in empty cells]{
&&&&&&&&&&&&&&&&&&&&&&&&&&&&&\\
0&&0&&0&&0&&0&&0&&0&&0&&&&&&&&&&&&&&&\\
&1&&1&&1&&1&&1&&1&&1&&1&&&&&&&&&&&&&&\\
&&1&&5&&4&&1&&3&&1&&5&&4&&&&&&&&&&&&&&\\
&&&4&&19&&3&&2&&2&&4&&19&&3&&&&&&&&&&&&&\\
&&&&15&&14&&5&&1&&7&&15&&14&&5&&&&&&&&&&&\\
&&\node{\cdots};&&&11&&23&&2&&3&&26&&11&&23&&2&&&\node{\cdots};&&&&&&&\\
&&&&&&18&&9&&5&&11&&19&&18&&9&&5&&&&&&&&&\\
&&&&&&&7&&22&&18&&8&&31&&7&&22&&18&&&&&&&&\\
&&&&&&&&17&&79&&13&&13&&12&&17&&79&&13&&&&&&&\\
&&&&&&&&&61&&57&&21&&5&&29&&61&&57&&21&&&&&&\\
&&&&&&&&&&44&&92&&8&&12&&104&&44&&92&&8&&&&&\\
&&&&&&&&&&&71&&35&&19&&43&&75&&71&&35&&19&&&&\\
&&&&&&&&&\node{\cdots};&&&27&&83&&68&&31&&121&&27&&83&&68&&&\node{\cdots};\\
&&&&&&&&&&&&&64&&297&&49&&50&&46&&64&&297&&49&&\\
&&&&&&&&&&&&&&229&&214&&79&&19&&109&&229&&214&&79&\\
&&&&&&&&&&&&&&&165&&345&&30&&45&&390&&165&&345&&30\\
&&&&&&&&&&&&&&&&&&&&&&&&&&&&&\\
&&&&&&&&&&&&&&&&&&&&&\node[rotate=-6.5,shift={(-0.034cm,-0.08cm)}]  {\ddots};&&&&&&\node[rotate=-6.5,shift={(-0.034cm,-0.08cm)}]  {\ddots};&&\\
};
     
\draw (m-18-23) node[myblue,shift={(0cm,0.1cm)}]{$\mathcal{D}$};
\draw[opacity=0,rounded corners,fill=myblue,fill opacity=0.15] (m-1-2.south west) -- (m-17-18.south west) -- (m-17-28.south west) -- (m-1-12.south west) -- cycle;

\end{tikzpicture}}
\caption{An infinite frieze of period $5$ with a fundamental domain $\mathcal{D}$.}\label{figexfrieze5p1}
\end{figure}
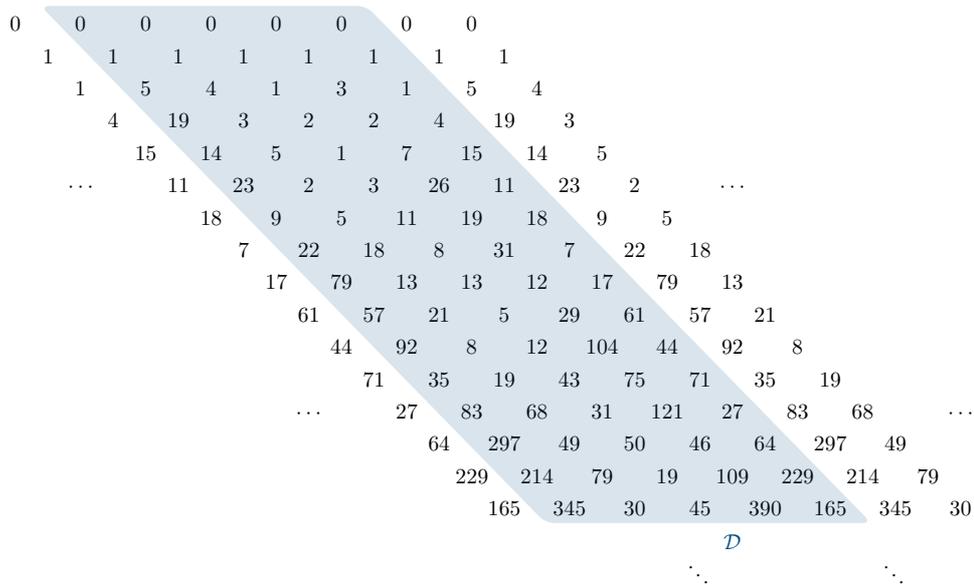 

For our purpose we are mainly interested in a particular family of infinite friezes which stay invariant under horizontal translation, such as the ones in Figures~\ref{figbasicfrieze} and \ref{figexfrieze5p1}.

\begin{defin}\label{defperiodic}
An infinite frieze $(m_{ij})_{j-i\ge -2}$ is called $n$-\emph{periodic} and denoted by $\mathcal{F}_n$ if there exists an integer $n\geq 1$ such that $m_{ij}=m_{i+n,j+n}$ for all $i\le j $. A \emph{fundamental domain} $\mathcal{D}$ for $\mathcal{F}_n$ is given by $n$ consecutive {\sc se}-diagonals of $\mathcal{F}_n$.
\end{defin}

Note that, if the quiddity row of an infinite frieze is periodic, the whole infinite frieze is periodic. Every non-trivial row of a periodic infinite frieze is given by a repeating sequence of positive integers, up to cyclic equivalence. Therefore, the entire periodic infinite frieze is covered by a fundamental domain by successive copies in horizontal direction, thus a fundamental domain contains all the information about the periodic infinite frieze.

\begin{defin}\label{defperiodicqs}
Given an $n$-periodic infinite frieze $\mathcal{F}_n=(m_{ij})_{j-i\ge -2}$, its \emph{quiddity sequence} $q_{\mathcal{F}_n}$ is the $n$-tuple $(a_1,a_2, \dots ,a_{n})$, determined up to cyclic equivalence, with $a_i=m_{ii}$ for all $i\in\{1,2\dots, n\}$.
\end{defin}

Given an infinite frieze $\mathcal{F}=(m_{ij})_{j-i\ge -2}$, we use the following notions below. For a fixed $i\in \mathbb{Z}$ we denote by the \emph{{\sc se}-diagonal} $f(a_i)$ through $a_i$ the infinite sequence $(m_{ij})_{j\ge i-2}$. We will usually drop $a_i$ whenever it is clear from the context. Similarly, $\bar f(a_j)=(m_{ij})_{i\le j+2}$ is the \emph{{\sc sw}-diagonal} of $\mathcal{F}$ through $a_j$. The {\sc se} ({\sc sw}) sign stands for south-east (south-west), the direction of the diagonal.

Clearly, the entry $m_{ij}$ is the intersection of the {\sc se}-diagonal through $a_i$ and the {\sc sw}-diagonal through $a_j$ (see Figure~\ref{figlemrelqsd}).

\begin{rem}
Given a {\sc se}-diagonal of an infinite frieze the unimodular rule enables us to fill the next {\sc se}-diagonal to the right (east), starting at the top. The analogous result is true for {\sc sw}-diagonals. Hence a fundamental domain of a periodic infinite frieze, and thus the periodic infinite frieze itself, is also determined as soon as one {\sc se}-diagonal or one {\sc sw}-diagonal, respectively, is given. If an infinite frieze is not periodic, a {\sc se}-diagonal and a {\sc sw}-diagonal with a common entry different from zero are needed to determine the whole infinite frieze.
\end{rem}

\begin{figure}[H]
\scalebox{1}{\begin{tikzpicture}[font=\normalsize] 
  \matrix(m) [matrix of math nodes,row sep={1.6em,between origins},column sep={1.6em,between origins},nodes in empty cells]{
m_{i,i-2}&&0&&0&&\node{\cdots};&&&&\node{\cdots};&&0&&0&&m_{j+2,j}\\
&m_{i,i-1}&&1&&1&&\node{\cdots};&&\node{\cdots};&&1&&1&&m_{j+1,j}&\\
&&a_i&&a_{i+1}&&a_{i+2}&&\node{\cdots};&&a_{j-2}&&a_{j-1}&&a_j&&\\
&&&\node[rotate=-6.5,shift={(-0.034cm,-0.08cm)}]  {\ddots};&&\node[rotate=-6.5,shift={(-0.034cm,-0.08cm)}]  {\ddots};&&\node[rotate=-6.5,shift={(-0.034cm,-0.08cm)}]  {\ddots};&&\node[rotate=6.5,shift={(-0.034cm,-0.08cm)}]  {\iddots};&&\node[rotate=6.5,shift={(-0.034cm,-0.08cm)}]  {\iddots};&&\node[rotate=6.5,shift={(-0.034cm,-0.08cm)}]  {\iddots};&&&\\
&&&&&&&&&&&&&&&&\\
&&&&&\node[rotate=-6.5,shift={(-0.034cm,-0.08cm)}]  {\ddots};&&\node[rotate=6.5,shift={(-0.034cm,-0.08cm)}]  {\iddots};&&\node[rotate=-6.5,shift={(-0.034cm,-0.08cm)}]  {\ddots};&&\node[rotate=6.5,shift={(-0.034cm,-0.08cm)}]  {\iddots};&&&&&\\
&&&&&&m_{i,j-2}&&&&m_{i+2,j}&&&&&&\\
&&&&&&&m_{i,j-1}&&m_{i+1,j}&&&&&&&\\
&&&&&&&&m_{ij}&&&&&&&&\\
     };
\end{tikzpicture}}
\caption{A cone in an infinite frieze with apex at $m_{ij}$ framed by a pair of intersecting diagonals, namely the \mbox{{\sc se}-diagonal} $f(a_i)$ and the \mbox{{\sc sw}-diagonal} $\bar f(a_j)$.}\label{figlemrelqsd}
\end{figure}

Motivated by the work of Conway and Coxeter in \cite{CCI} the next lemma describes how the entries of an infinite frieze and its quiddity row depend on each other.

\begin{lem}\label{lemrelationqsdiag}
Let $(a_i)_{i\in\mathbb{Z}}$ be the quiddity row of an infinite frieze $\mathcal{F}=(m_{ij})_{j-i\ge -2}$. Then, for all $j \geq i$,
\begin{enumerate}[$a)$]
\item\label{relqsda}  $\displaystyle a_j=\frac{m_{ij}+m_{i,j-2}}{m_{i,j-1}}$ \quad and \quad $\displaystyle a_i=\frac{m_{ij}+m_{i+2,j}}{m_{i+1,j}}$,\\[1em]
\item\label{relqsdb} $\displaystyle m_{ij}=\det \begin{pmatrix}a_i&1&&&0\\
1&a_{i+1}&1&\\
&\ddots&\ddots&\ddots\\
&&1&a_{j-1}&1\\
0&&&1&a_j
\end{pmatrix}$.
\end{enumerate}
\end{lem}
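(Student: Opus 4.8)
The plan is to reduce both parts to a single three-term recurrence running down the {\sc se}-diagonals, and then read everything off from it. Observe first that the two identities in (a) are just rearrangements of the recurrences
\[
m_{ij}=a_j\,m_{i,j-1}-m_{i,j-2}\qquad\text{and}\qquad m_{ij}=a_i\,m_{i+1,j}-m_{i+2,j},
\]
the first running down a fixed {\sc se}-diagonal $f(a_i)$ and the second down a fixed {\sc sw}-diagonal $\bar f(a_j)$; the initial values $m_{i,i-2}=0$, $m_{i,i-1}=1$, $m_{ii}=a_i$ are built into Definition~\ref{definffrieze}, and the denominators $m_{i,j-1}$, $m_{i+1,j}$ are positive for $j\ge i$, so dividing is harmless. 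Moreover the array $\tilde m_{ij}:=m_{-j,-i}$ is again an infinite frieze, since the unimodular rule is symmetric in the two diagonal directions and the boundary rows are preserved, and its quiddity row is $\tilde a_i=a_{-i}$. Applying the {\sc se}-recurrence to $\tilde m$ and substituting $p=-j$, $q=-i$ returns the second ({\sc sw}) recurrence for $m$; hence it suffices to establish the {\sc se}-recurrence.

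I would prove the statement $G(i,j)\colon m_{ij}=a_j\,m_{i,j-1}-m_{i,j-2}$ by induction on $d=j-i\ge 0$. The base case $d=0$ is immediate from $m_{ii}=a_i$, $m_{i,i-1}=1$, $m_{i,i-2}=0$. For the inductive step I pass from row $i+1$ to row $i$, using the inductive hypothesis $G(i+1,j)$, namely $m_{i+1,j}=a_j\,m_{i+1,j-1}-m_{i+1,j-2}$, together with the two adjacent diamond relations
\[
m_{i,j-1}m_{i+1,j}-m_{i+1,j-1}m_{ij}=1,\qquad m_{i,j-2}m_{i+1,j-1}-m_{i+1,j-2}m_{i,j-1}=1.
\]
Substituting the hypothesis into the first relation and regrouping yields
\[
m_{i+1,j-1}\bigl(a_j\,m_{i,j-1}-m_{ij}\bigr)=1+m_{i,j-1}m_{i+1,j-2},
\]
while the second relation says precisely that the right-hand side equals $m_{i+1,j-1}m_{i,j-2}$. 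Cancelling the positive factor $m_{i+1,j-1}$ gives $a_j\,m_{i,j-1}-m_{ij}=m_{i,j-2}$, which is $G(i,j)$.

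I expect the only delicate point to be this orchestration: the crux is that the two neighbouring diamond relations interlock so that the common factor $m_{i+1,j-1}$ appears on both sides and cancels (its positivity, guaranteed by Definition~\ref{definffrieze}, is what legitimises the cancellation). Once (a) is in hand, part (b) is formal: writing $K_{ij}$ for the displayed tridiagonal determinant, cofactor expansion along its last row gives the continuant recurrence $K_{ij}=a_j\,K_{i,j-1}-K_{i,j-2}$, with $K_{ii}=a_i$ and the standard conventions $K_{i,i-1}=1$ (empty determinant) and $K_{i,i-2}=0$. Thus $K_{ij}$ obeys the same recurrence and the same initial data as $m_{ij}$, so $m_{ij}=K_{ij}$ by induction on $j-i$, completing the proof.
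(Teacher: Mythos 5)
Your proposal is correct and takes essentially the same route as the paper: the paper's key step --- the invariance of $\frac{x_1+x_3}{x_2}$ across adjacent {\sc se}-diagonals, imported from the finite-frieze argument of Conway--Coxeter and anchored at the top triple $(0,1,a_j)$ --- is precisely your inductive step on $j-i$ combining the two interlocking diamond relations, and part $b)$ is in both cases the continuant recurrence with matching initial data. Your reflection $\tilde m_{ij}=m_{-j,-i}$ is merely a tidier formalisation of what the paper dispatches with \enquote{similarly} for the second identity in $a)$.
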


\begin{proof}
\begin{inparaenum}[$a)$] 
\item Clearly, the first equality is true for $j=i$. Now, we continue similar to the proof of the analogue relation valid for finite friezes. For $j\geq i$ we use that by the unimodular rule the entries in any two {\sc se}-diagonals $f,f'$ of $\mathcal{F}$ arranged as in the following figure on the right satisfy the equality on the left, where by definition, $x_2$ and $x_2'$ are positive integers.

{\hfil \begin{tikzpicture}[font=\normalsize] 
  \matrix(m) [matrix of math nodes,row sep={1.25em,between origins},column sep={1.25em,between origins},nodes in empty cells]{
\node[gray]  {f};&&&&\node[gray]  {f'};&&&&&&&&&&&&&&\\
&\node {0};&&&&0&&&&&&&&&&&&&\\
&&\node{1};&&&&1&&&&&&&&&&&&\\
&&&&&&&\node[rotate=-6.5,shift={(-0.034cm,-0.08cm)}]  {\ddots};&&&&&&&&&&&\\
&&&&\node[rotate=-6.5,shift={(-0.034cm,-0.08cm)}]  {\ddots};&&&& x'_1&&&&&&&&&&\displaystyle{\frac{x_1+x_3}{x_2}=\frac{x'_1+x'_3}{x'_2}}\\
&&&&&&&\node[rotate=6.5,shift={(-0.034cm,-0.08cm)}]  {\iddots};&&x'_2&&&&&&&&&\\
&&&&&&\node {x_1};&&\node[rotate=6.5,shift={(-0.034cm,-0.08cm)}]  {\iddots};&&x'_{3}&&&&&&&&&&\\
&&&&&&&\node  {x_2};&&\node[rotate=6.5,shift={(-0.034cm,-0.08cm)}]  {\iddots};&&\node[rotate=-6.5,shift={(-0.034cm,-0.08cm)}]  {\ddots};&&&&&&&\\
&&&&&&&&\node  {x_3};&&&&&&&&&&\\
&&&&&&&&&\node[rotate=-6.5,shift={(-0.034cm,-0.08cm)}]  {\ddots};&&&&&&&&&\\
};
\end{tikzpicture}}

\noindent  Applying this to $x_1'=0,$ $x _2'=1,$ $x_3'=a_j$ and the corresponding entries in $f(a_i)$ with $x_1=m_{i,j-2},$ $x_2=m_{i,j-1},$ $x_3=m_{ij}$, as illustrated in Figure~\ref{figlemrelqsd}, we get the desired result. Similarly, one can check that the second equality holds.

\item The claim follows immediately by induction on $i$ and with $a)$.
\end{inparaenum}
\end{proof}

\begin{rem}\label{remrelationqsdiagperiodic}
In particular, for an $n$-periodic infinite frieze $\mathcal{F}_n=(m_{ij})_{j-i\ge -2}$ with quiddity sequence $q_{\mathcal{F}_n}=(a_1,a_2, \dots, a_{n})$, the quiddity row $(a_i)_{i\in\mathbb{Z}}$ of $\mathcal{F}_n$ satisfies $a_i=a_{i+l n}$, for all $i\in\{ 1,\dots, n\}$ and every integer $l$.
Then Lemma~\ref{lemrelationqsdiag} implies
$$a_j=\frac{m_{i,j-2+ln}+m_{i,j+ln}}{m_{i,j-1+ln}}$$
for all $j\in\{1,\dots, n\}$ and every non-negative integer $l$.
Moreover,
for all $j\geq i$, we have
$$m_{ij}=\det \begin{pmatrix}
d_0&1&&&0\\
1&d_1&1&&\\
&\ddots&\ddots&\ddots&\\
&&1&d_{j-i-1}&1\\
0&&&1&d_{j-i}
\end{pmatrix},$$
where $d_{k}=a_{i+k \mod n}$ for $k\in\{0,1,\dots,j-i\}$.
\end{rem}

Note that by Lemma~\ref{lemrelationqsdiag} \ref{relqsda}) for three consecutive diagonals of an infinite frieze the middle one is proportional to the sum of its two neighbors. This result can be formulated for {\sc se}-diagonals as follows. Clearly, there is an analogue for {\sc sw}-diagonals.

\begin{cor}\label{cordiagonalsum}
Let $\mathcal{F}=(m_{ij})_{j-i\ge -2}$ be an infinite frieze with quiddity row $q_{\mathcal{F}}=(a_i)_{i\in \mathbb{Z}}$ and $k$ be a fixed integer. Then the {\sc se}-diagonal $f(a_k)$ is given by the two neighboring \mbox{{\sc se}-diagonals} $f(a_{k-1})$ and $f(a_{k+1})$ via
$$a_{k-1}\cdot m_{kj}= m_{k-1,j}+ m_{k+1,j}$$
for all $j\geq k-1$. Moreover, if $a_{k-1}=1$, then $f(a_k)$ is given by the \enquote{shifted sum} of $f(a_{k-1})$ and $f(a_{k+1})$.
\end{cor}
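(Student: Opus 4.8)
The plan is to derive both assertions of Corollary~\ref{cordiagonalsum} directly from Lemma~\ref{lemrelationqsdiag}\ref{relqsda}). The second equality there, read with the quiddity entry at position $k$, states that $a_k = (m_{k-1,j} + m_{k+1,j})/m_{kj}$ for the appropriate range of $j$; however, this gives the factor $a_k$ rather than the $a_{k-1}$ that appears in the claim, so the first thing I would do is identify the correct instance of the lemma to invoke. The relation I actually want comes from the first equality of part~\ref{relqsda}), applied along the {\sc se}-diagonal $f(a_{k-1})$ through $a_{k-1}$: there the entry $a_j$ is recovered from three consecutive entries $m_{i,j-2}, m_{i,j-1}, m_{ij}$ in a single {\sc se}-diagonal. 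I would therefore set the diagonal index to $i = k-1$ and unwind the indices so that the three neighboring {\sc se}-diagonals $f(a_{k-1}), f(a_k), f(a_{k+1})$ play the roles of the three columns, obtaining $a_{k-1}\, m_{kj} = m_{k-1,j} + m_{k+1,j}$.

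Concretely, the cleanest route is to invoke the \emph{second} identity in Lemma~\ref{lemrelationqsdiag}\ref{relqsda}), namely $a_i = (m_{ij} + m_{i+2,j})/m_{i+1,j}$, which expresses the quiddity entry $a_i$ in terms of three horizontally adjacent entries lying in the three consecutive {\sc se}-diagonals $f(a_i), f(a_{i+1}), f(a_{i+2})$ at a common row level. Substituting $i = k-1$ turns this into $a_{k-1} = (m_{k-1,j} + m_{k+1,j})/m_{kj}$, and clearing the denominator $m_{kj}$ yields precisely the asserted formula $a_{k-1}\cdot m_{kj} = m_{k-1,j} + m_{k+1,j}$. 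I would note that the inequality $j \ge k-1$ in the statement is exactly the range in which all three entries $m_{k-1,j}, m_{kj}, m_{k+1,j}$ are defined and the lemma applies (the condition $j - i \ge -1$ of the lemma becomes $j \ge k-2$, and restricting to $j \ge k-1$ keeps $m_{kj}$ a genuine frieze entry so that division is legitimate).

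For the final sentence, I would specialize to $a_{k-1} = 1$. Then the identity collapses to $m_{kj} = m_{k-1,j} + m_{k+1,j}$ for all $j \ge k-1$, which says that each entry of the {\sc se}-diagonal $f(a_k)$ is the sum of the corresponding entries of its two neighbors $f(a_{k-1})$ and $f(a_{k+1})$. To justify the word \enquote{shifted} I would track the index alignment: the three diagonals $f(a_{k-1}), f(a_k), f(a_{k+1})$ start at staggered heights, so reading off $m_{kj}$ as $m_{k-1,j} + m_{k+1,j}$ for each fixed $j$ pairs entries that sit on the same horizontal row but on diagonals emanating from different apices, which is the shift being referred to. I would illustrate this with a small portion of the frieze (as in Figure~\ref{figlemrelqsd}) to make the offset transparent.

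I do not anticipate a serious obstacle here, since everything reduces to a direct substitution into an already-proved lemma; the only point requiring care is the bookkeeping of indices — making sure that the instance $a_{k-1} = (m_{k-1,j} + m_{k+1,j})/m_{kj}$ is the correct reading of the second equality of Lemma~\ref{lemrelationqsdiag}\ref{relqsda}) and that the range $j \ge k-1$ is the sharp one guaranteeing all referenced entries exist. The most delicate wording will be the phrase \enquote{shifted sum}, which is a geometric description rather than an equation, so I would state it precisely in terms of the row-aligned entries to avoid ambiguity.
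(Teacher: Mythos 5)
Your proposal is correct and coincides with the paper's own (implicit) argument: the corollary is stated as an immediate consequence of Lemma~\ref{lemrelationqsdiag}~\ref{relqsda}), exactly via the substitution $i=k-1$ into the second identity $a_i=(m_{ij}+m_{i+2,j})/m_{i+1,j}$, which after clearing the denominator gives $a_{k-1}\,m_{kj}=m_{k-1,j}+m_{k+1,j}$ on precisely the range $j\ge k-1$ (the lemma's condition is $j\ge i$, not $j-i\ge -1$, so your quoted range $j\ge k-2$ in the parenthetical is a slight mis-citation, though your final range is the right one). The first paragraph's detour through the first equality of part~\ref{relqsda}) is unnecessary — that identity yields the {\sc sw}-diagonal analogue mentioned in the paper — but you correctly settle on the second identity, and the specialization $a_{k-1}=1$ giving the \enquote{shifted sum} matches the paper's reading.
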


Before studying operations on infinite friezes in the next section we have a short look at \mbox{$1$-periodic} infinite friezes. It makes sense to call them \emph{complete infinite friezes} since they can arise from complete graphs. It is an easy induction argument using Corollary~\ref{cordiagonalsum} to prove that complete infinite friezes are determined as follows.
\pagebreak
\begin{prop}\label{propcompletefrieze}
Let $a\geq 2$ be an integer and let $q=(a_i)_{i\in\mathbb{Z}}$ be the constant sequence with $a_i=a$ for all $i\in\mathbb{Z}$. Then $q$ is the quiddity row of a complete infinite frieze $\mathcal{F}_a$. Moreover, the entries $m_{ij},j-i\ge -2$, of $\mathcal{F}_a$ are given by
$$m_{ij}=\sum\limits_{k=0}^{\big\lfloor \frac {j-i+1}2 \big\rfloor} (-1)^k \binom{j-i+1-k}{k} a^{j-i+1-2k}.$$
\end{prop}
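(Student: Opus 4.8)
The plan is to prove both claims of Proposition~\ref{propcompletefrieze} by induction, with the combinatorial formula being the real content. First I would establish that the constant sequence $q=(a)_{i\in\mathbb{Z}}$ with $a\geq 2$ genuinely gives an infinite frieze, i.e.\ that all resulting entries $m_{ij}$ are positive integers. Since $a\geq 2$, the first few diagonal entries are $m_{i,i-2}=0$, $m_{i,i-1}=1$, $m_{ii}=a\geq 2$, and the unimodular rule propagates integer values; positivity is what requires care. For this I would use Corollary~\ref{cordiagonalsum}, which in the complete (constant) case reads
\[
a\cdot m_{kj}=m_{k-1,j}+m_{k+1,j}
\]
for all $j\geq k-1$. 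This is a clean three-term recurrence relating three adjacent {\sc se}-diagonals, and an easy induction on the diagonal index $j-i$ shows every entry is a positive integer that grows with $j-i$ (using $a\geq 2$).

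The main step is the explicit formula. I would fix the diagonal index $r:=j-i+1\geq -1$ and set $M_r:=m_{ij}$, which is well defined independently of $i$ by $1$-periodicity. The claimed identity becomes
\[
M_r=\sum_{k=0}^{\lfloor r/2\rfloor}(-1)^k\binom{r-k}{k}a^{\,r-2k},
\]
and I recognize the right-hand side as the Chebyshev-type polynomial satisfying the recurrence $M_r=a\,M_{r-1}-M_{r-2}$. The natural strategy is therefore to show that the frieze entries $M_r$ obey this same second-order recurrence, and then match initial conditions. The recurrence for $M_r$ comes directly from the constant-diagonal form of Corollary~\ref{cordiagonalsum}: rewriting $a\cdot m_{kj}=m_{k-1,j}+m_{k+1,j}$ in terms of the diagonal index gives precisely $a\,M_{r-1}=M_{r-2}+M_r$, hence $M_r=a\,M_{r-1}-M_{r-2}$. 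With base cases $M_0=m_{i,i-1}=1$ (the row of $1$'s, $r=0$) and $M_1=m_{ii}=a$ (the quiddity row, $r=1$), the recurrence determines every $M_r$.

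It then remains to verify that the closed form satisfies the same recurrence and base cases. For $r=0$ and $r=1$ the sum reduces to $1$ and $a$ respectively, matching. For the inductive step I would denote $S_r:=\sum_{k}(-1)^k\binom{r-k}{k}a^{r-2k}$ and check that $S_r=a\,S_{r-1}-S_{r-2}$ by comparing coefficients of each power $a^{r-2k}$ on both sides; this reduces to the Pascal-type identity $\binom{r-k}{k}=\binom{r-1-k}{k}+\binom{r-1-k}{k-1}$, taking care of the boundary terms where $r$ is even or odd so that the upper summation limit shifts. Since $M_r$ and $S_r$ satisfy the same linear recurrence and agree at $r=0,1$, they coincide for all $r\geq 0$, which is the assertion.

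The main obstacle I anticipate is purely bookkeeping: handling the parity-dependent endpoints of the binomial sum when verifying $S_r=a\,S_{r-1}-S_{r-2}$, since the index $\lfloor r/2\rfloor$ behaves differently for even and odd $r$ and the extremal binomial coefficients must be checked to vanish or combine correctly. Everything else—the integrality and positivity via Corollary~\ref{cordiagonalsum}, and the identification of the recurrence—is a direct and short induction.
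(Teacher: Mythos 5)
Your proposal is correct and is essentially the paper's own argument: the paper prints no proof of Proposition~\ref{propcompletefrieze}, saying only that it is \enquote{an easy induction argument using Corollary~\ref{cordiagonalsum}}, and your write-up---translating that corollary in the $1$-periodic case into the recurrence $M_r=a\,M_{r-1}-M_{r-2}$ with $M_0=1$, $M_1=a$, then matching the closed form via $\binom{r-k}{k}=\binom{r-1-k}{k}+\binom{r-1-k}{k-1}$---is exactly that induction, carried out. One small tightening: to establish the \emph{existence} part (that all entries are positive integers) you should not invoke Corollary~\ref{cordiagonalsum} as stated, since its hypothesis is that one already has an infinite frieze; instead define the array by $m_{ij}:=M_{j-i+1}$ from the recurrence and verify by the same short induction both positivity ($M_r\ge 2M_{r-1}-M_{r-2}>M_{r-1}$ for $a\ge 2$) and the unimodular rule in the form $M_r^2-M_{r-1}M_{r+1}=1$, after which your use of the recurrence is fully justified.
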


%%%%%%%%%
%
\section{Cutting and gluing infinite friezes}\label{seccutglue}
%
%%%%%%%%%

In this section we point out two particular ways how an infinite frieze can be modified to obtain a new infinite frieze by extending the work of  Conway and Coxeter in \cite{CCI}. We also describe how these algebraic operations can be used on periodic infinite friezes.

%%%%%%%%%
\subsection{Gluing}\label{secglue}
%%%%%%%%%

The first operation that we describe on infinite friezes produces a new infinite frieze starting from an infinite frieze by inserting a pair of diagonals. Note that this is not the only possibility to define an operation on infinite friezes that enlarges them.

\begin{thm}\label{thmgluefrieze}
Let $\mathcal{F}$ be an infinite frieze with quiddity row $(a_i)_{i\in\mathbb{Z}}$ and $k$ be a fixed integer. Then the sequence $(\hat a_i)_{i\in\mathbb{Z}}$ defined by
$$\hat a_i=\begin{cases}
a_i&\text{if }i\leq k-1,\\
a_k\!+\!1&\text{if }i=k,\\
1&\text{if }i=k+1,\\
a_{k+1}\!+\!1&\text{if }i=k+2,\\
a_{i-1}&\text{if }i\geq k+3,
\end{cases}$$
is the quiddity row of an infinite frieze $\widehat{\mathcal{F}}$.
\end{thm}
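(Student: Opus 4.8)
The plan is to realize $\widehat{\mathcal{F}}$ through the determinant formula of Lemma~\ref{lemrelationqsdiag}\ref{relqsdb}) and reduce the entire statement to a single positivity check. Write $K(b_r,\dots,b_s)$ for the tridiagonal determinant occurring there, with the conventions $K(\,)=1$ and $K(b)=b$, so that any infinite frieze with quiddity row $(b_i)$ has entries $m_{ij}=K(b_i,\dots,b_j)$. Conversely, starting from the prescribed sequence $(\hat a_i)$ I would set $\hat m_{ij}=K(\hat a_i,\dots,\hat a_j)$ for $j\ge i$, together with $\hat m_{i,i-1}=1$ and $\hat m_{i,i-2}=0$. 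The boundary rows and the unimodular rule $\hat m_{ij}\hat m_{i+1,j+1}-\hat m_{i+1,j}\hat m_{i,j+1}=1$ then hold automatically: writing $M(b)=\bigl(\begin{smallmatrix}b&-1\\1&0\end{smallmatrix}\bigr)$, one has $\det M(b)=1$ and the top-left entry of $M(b_r)\cdots M(b_s)$ is $K(b_r,\dots,b_s)$, so the unimodular relations are just the identity $\det\bigl(M(\hat a_i)\cdots M(\hat a_{j+1})\bigr)=1$. Since the $\hat a_i$ are positive integers, each $\hat m_{ij}$ is automatically an integer. Hence the whole theorem reduces to showing $\hat m_{ij}>0$ for all $j\ge i$.

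To run this I would isolate two elementary continuant identities, both read off from the matrices $M(b)$. First, the \emph{insertion identity} $M(a+1)M(1)M(b+1)=M(a)M(b)$, verified by one $2\times2$ multiplication; being an identity of matrices, it survives multiplication by any common factors on the left and right, which yields $K(\dots,a+1,1,b+1,\dots)=K(\dots,a,b,\dots)$ for arbitrary (but equal) surrounding entries. Second, the two \emph{increment recursions} $K(b_r,\dots,b_{s-1},b_s+1)=K(b_r,\dots,b_s)+K(b_r,\dots,b_{s-1})$ and its mirror image $K(b_r+1,b_{r+1},\dots,b_s)=K(b_r,\dots,b_s)+K(b_{r+1},\dots,b_s)$, both immediate from the three-term recursion for $K$.

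With these in hand I would evaluate $\hat m_{ij}=K(\hat a_i,\dots,\hat a_j)$ according to how the interval $[i,j]$ meets the inserted block $\{k,k+1,k+2\}$. If $j\le k-1$ then $\hat m_{ij}=m_{ij}$, and if $i\ge k+3$ then $\hat m_{ij}=m_{i-1,j-1}$; both are genuine (positive) entries of $\mathcal{F}$. If the interval contains the full incremented block, i.e.\ $i\le k$ and $j\ge k+2$, the insertion identity collapses it to $\hat m_{ij}=m_{i,j-1}>0$, and the small entries touching the new $1$ (for instance $\hat m_{i,k+1}=m_{ik}$, $\hat m_{k+1,j}=m_{k+1,j-1}$, and $\hat m_{k+1,k+1}=1$) reduce by the recursions to single entries of $\mathcal{F}$ or to the value $1$. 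The genuinely load-bearing cases are those in which $[i,j]$ ends at $k$ or starts at $k+2$, so that only one of the two incremented entries is seen: here the increment recursions give $\hat m_{ik}=m_{ik}+m_{i,k-1}$ and $\hat m_{k+2,j}=m_{k+1,j-1}+m_{k+2,j-1}$, each a sum of two positive entries of $\mathcal{F}$ and hence positive. The main obstacle is precisely this boundary bookkeeping around the inserted diagonals: away from the insertion $\widehat{\mathcal{F}}$ is merely a relabelling of $\mathcal{F}$, and the entire content is to show that the few entries flanking the new $1$ are positive, which the increment identities deliver by exhibiting each such value as either an old frieze entry or a sum of two adjacent ones.
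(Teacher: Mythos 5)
Your proposal is correct, and it takes a genuinely different route from the paper. The paper proves Theorem~\ref{thmgluefrieze} by explicitly constructing $\widehat{\mathcal{F}}$ as the old frieze with a pair of inserted diagonals, writing down the piecewise formula $\hat m_{ij}=m_{ij}$, $m_{i,j-1}+m_{ij}$, $m_{i,j-1}$, $m_{i,j-1}+m_{i-1,j-1}$, $m_{i-1,j-1}$ according to the position of $(i,j)$; positivity is then immediate, and the work goes into verifying the unimodular rule diamond by diamond along the two inserted diagonals (using that the new diagonals are shifted sums of their neighbors, cf.\ Corollary~\ref{cordiagonalsum}). You invert this division of labor: by defining $\hat m_{ij}$ via the continuant $K(\hat a_i,\dots,\hat a_j)$ of Lemma~\ref{lemrelationqsdiag}~\ref{relqsdb}), the unimodular rule comes for free from $\det\bigl(M(\hat a_i)\cdots M(\hat a_{j+1})\bigr)=1$ (the standard fact that all four entries of such a product are continuants makes this literally the diamond relation), and all remaining effort goes into positivity, which your insertion identity $M(a+1)M(1)M(b+1)=M(a)M(b)$ and the two increment recursions settle; I checked the matrix identity and your case analysis ($j\le k-1$; $j=k$; $j=k+1$ with $i\le k$ or $i=k+1$; $i\le k\le k+2\le j$; $i=k+1<j$; $i=k+2$; $i\ge k+3$) is exhaustive and each value comes out positive. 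Notably, your casework reproduces exactly the paper's piecewise formula for $\hat m_{ij}$, as it must, since a frieze is determined by its quiddity row. What each approach buys: yours is shorter and more conceptual, with the insertion identity serving as the algebraic shadow of gluing a triangle in the Conway--Coxeter picture, and it isolates positivity as the only real content; the paper's construction, by contrast, makes the geometric effect of gluing explicit (a {\sc se}- and a {\sc sw}-diagonal are inserted, everything else is untouched), and that explicit description is not incidental --- it is reused verbatim in Corollary~\ref{cornglueperiodicfrieze} and drives the later inductive proofs about arithmetic friezes and matching numbers, so a referee would likely want the piecewise formula recorded even if your proof were adopted. One cosmetic point: to make the boundary diamonds $j-i=-1$ fall under your determinant argument you should also state the convention that continuants of negative length are $0$, though these cases are trivially checked by hand.
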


For an infinite frieze $\mathcal{F}$, we call the operation that maps its quiddity row $(a_i)_{i\in\mathbb{Z}}$ to $(\hat a_i)_{i\in\mathbb{Z}}$ \emph{gluing above the pair $(a_k,a_{k+1})$}, where $(\hat a_i)_{i\in\mathbb{Z}}$ is as in Theorem~\ref{thmgluefrieze}. The situation is illustrated in Figure~\ref{figgluefrieze}.

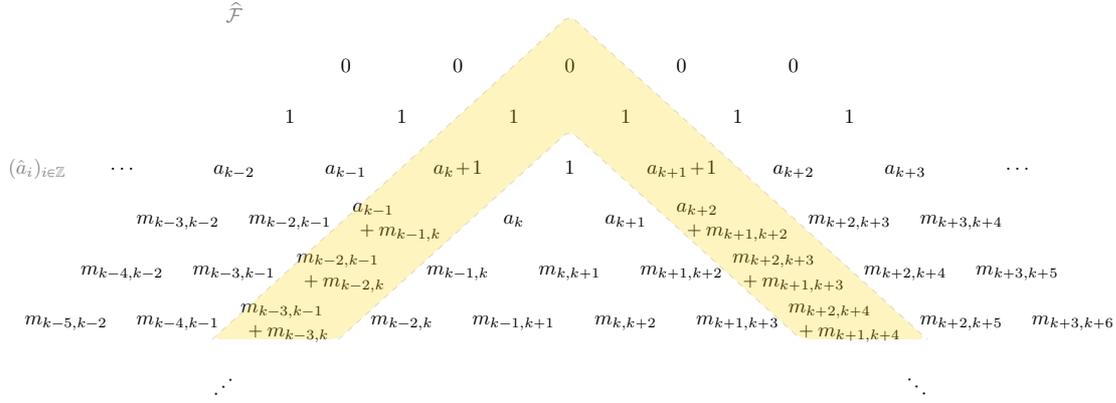
\begin{figure}[t]
\resizebox{\linewidth}{!}{\begin{tikzpicture}[font=\normalsize] 
  \matrix(m) [matrix of math nodes,row sep={2.5em,between origins},column sep={2.75em,between origins},nodes in empty cells]{
&&&\node[gray]  {\widehat{\mathcal{F}}};&&&&&&&&&&&&&&&\\
&&&&&0&&0&&0&&0&&0&&&&&\\
&&&&1&&1&&1&&1&&1&&1&&&&\\
\node[gray,shift={(-0.5cm,0cm)}]  {(\hat a_i)_{i\in\mathbb{Z}}};&\node{\cdots};&&a_{k-2}&&a_{k-1}&&a_k\!+\!1&&1&&a_{k+1}\!+\!1&&a_{k+2}&&a_{k+3}&&\node{\cdots};&\\
&&m_{k-3,k-2}&&m_{k-2,k-1}&&\begin{array}{@{}l} a_{k-1}\\\enspace\!+\,m_{k-1,k}\end{array}&&a_k&&a_{k+1}&&\begin{array}{@{}l} a_{k+2} \\\enspace+\, m_{k+1,k+2}\end{array}&&m_{k+2,k+3}&&m_{k+3,k+4}&&\\
&m_{k-4,k-2}&&m_{k-3,k-1}&&\begin{array}{@{}l} m_{k-2,k-1}\\\enspace\!+\,m_{k-2,k}\end{array}&&m_{k-1,k}&&m_{k,k+1}&&m_{k+1,k+2}&&\begin{array}{@{}l} m_{k+2,k+3} \\\enspace+\,m_{k+1,k+3}\end{array}&&m_{k+2,k+4}&&m_{k+3,k+5}&\\
m_{k-5,k-2}&&m_{k-4,k-1}&&\begin{array}{@{}l} m_{k-3,k-1}\\\enspace\!+\,m_{k-3,k}\end{array}&&m_{k-2,k}&&m_{k-1,k+1}&&m_{k,k+2}&&m_{k+1,k+3}&&\begin{array}{@{}l} m_{k+2,k+4} \\\enspace+\,m_{k+1,k+4}\end{array}&&m_{k+2,k+5}&&m_{k+3,k+6}\\
&&&\node[rotate=6.5,shift={(-0.216cm,-0.33cm)}]  {\iddots};&&&&&&&&&&&&\node[rotate=-6.5,shift={(0.216cm,-0.33cm)}]  {\ddots};&&&\\
};

\draw[rounded corners,semithick,gray,dashed,opacity=0.25] ($(m-7-4.west)+(-0.25,-0.25)$)--(m-1-10.north) -- ($(m-7-16.east)+(0.25,-0.25)$);

\draw[rounded corners,semithick,gray,dashed,opacity=0.25] ($(m-7-6.east)+(-0.25,-0.25)$)--(m-3-10.south) -- ($(m-7-14.west)+(0.25,-0.25)$);

\draw[rounded corners,opacity=0,fill=myyellow,fill opacity=0.25]  (m-1-10.north) -- ($(m-7-4.west)+(-0.25,-0.25)$)--($(m-7-6.east)+(-0.25,-0.25)$) -- (m-3-10.south) -- ($(m-7-14.west) +(0.25,-0.25)$)-- ($(m-7-16.east)+(0.25,-0.25)$)--cycle;
\end{tikzpicture}}
\caption{The infinite frieze $\widehat{\mathcal{F}}$ produced by gluing above the pair $(a_k,a_{k+1})$ in an infinite frieze $\mathcal{F}=(m_{ij})_{j-i\ge -2}$.}\label{figgluefrieze}
\end{figure}

\begin{proof}
Let $\mathcal{F}=(m_{ij})_{j-i\ge -2}$ be an infinite frieze with quiddity row $(a_i)_{i\in\mathbb{Z}}$. We choose $k\in\mathbb{Z}$ and consider the array $\widehat{\mathcal{F}}=(\hat m_{ij})_{j-i\ge -2}$ with $\hat m_{i,i-2}=0,\hat m_{i,i-1}=1$ and, for $j-i\ge0$, given by the entries of $\mathcal{F}$ as shown in Figure~\ref{figgluefrieze}
$$
\hat m_{ij}=\begin{cases}
m_{ij}&\text{if }j\le k-1,\\
m_{i,j-1}+m_{ij} &\text{if } j=k,\\
m_{i,j-1} &\text{if }i\le k+1\le j,\\
m_{i,j-1}+m_{i-1,j-1} &\text{if }i=k+2,\\
 m_{i-1,j-1}&\text{if }i\ge k+3
\end{cases}
$$
We show that $\widehat{\mathcal{F}}$ is an infinite frieze. Clearly, since $\mathcal{F}$ is an infinite frieze, we have $\hat m_{ij}>0$ if $j-i\ge0$. It remains to show that the unimodular rule is satisfied for every diamond in $\widehat{\mathcal{F}}$, i.e.\ $\hat m_{ij}\hat m_{i+1,j+1}-\hat m_{i+1,j}\hat m_{i,j+1}=1$ for $j-i\geq -1$. In particular, for $j-i= -1$, this is always true, so we may assume that $j-i\geq 0$.

As long none of the entries in the yellow diagonals are involved, namely the {\sc sw}-diagonal $\bar f(a_k)$ and the {\sc se}-diagonal $f(a_{k+2})$ (see Figure~\ref{figgluefrieze}), this property is immediately inherited by $\mathcal{F}$. Otherwise, a diamond involves two entries either contained in $\bar f(a_k)$, or in $f(a_{k+2})$ and we have to check two separate cases for each diagonal. Firstly, if the diamond lies to the left of $\bar f(a_k)$, we have
\begin{align*}
\hat m_{i,k-1}\hat m_{i+1,k}-\hat m_{i+1,k-1}\hat m_{ik}&=m_{i,k-1}\left(m_{i+1,k-1}+m_{i+1,k}\right) -m_{i+1,k-1}\left(m_{i,k-1}+m_{ik}\right)\\
&=m_{i,k-1}m_{i+1,k}-m_{i+1,k-1}m_{ik}=1
\end{align*}
for all  $i\le k-1$. Similarly, if the diamond lies to the right of $\bar f(a_k)$, we get $\hat m_{i,k}\hat m_{i+1,k+1}-\hat m_{i+1,k}\hat m_{i,k+1}=1$ for all $i\le k $. The two cases for $f(a_{k+2})$ are completely analogous. Hence $\widehat{\mathcal{F}}$ is an infinite frieze with quiddity row \mbox{$(\dots, a_{k-2},a_{k-1} ,a_k\!+\!1, 1, a_{k+1}\!+\!1,a_{k+2},a_{k+3}, \dots)$}, as desired.
\end{proof}

\begin{rem}
In the proof of the theorem the effect of gluing into the initial infinite frieze is given explicitly: $\widehat{\mathcal{F}}$ is obtained from $\mathcal{F}$ by inserting simultaneously a pair of diagonals. In particular, a {\sc se}-diagonal and a {\sc sw}-diagonal with common entry in the first row of $\widehat{\mathcal{F}}$ are inserted such that  every entry of the new diagonals in $\widehat{\mathcal{F}}$ is given by the sum of the two closest entries in the neighboring diagonals to the left and to the right, see Figure~\ref{figgluefrieze}. That is precisely what the second part of Corollary~\ref{cordiagonalsum} says. The \mbox{symbol\, $\widehat{\ }$} denotes that we glued in a pair of diagonals.
\end{rem}

Note that if we start with the basic frieze $\mathcal{F}_{\ast}$ given in Figure~\ref{figbasicfrieze}, then gluing serves as a tool to produce new infinite friezes from $\mathcal{F}_{\ast}$. The next result follows immediately.

\begin{cor}
There exist infinitely many infinite friezes.
\end{cor}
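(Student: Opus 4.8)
The plan is to use the gluing operation of Theorem~\ref{thmgluefrieze} as an engine: starting from the basic infinite frieze $\mathcal{F}_\ast$ of Figure~\ref{figbasicfrieze}, whose quiddity row is the constant sequence $a_i=m_{ii}=2$, each gluing produces a bona fide infinite frieze, so it suffices to manufacture from $\mathcal{F}_\ast$ an infinite sequence of friezes that are guaranteed to be \emph{pairwise distinct}. Since a frieze is determined by its quiddity row, distinctness can be read off directly at the level of quiddity rows. The naive argument ``each gluing yields a new frieze, so iterate'' is not by itself conclusive, because a priori iterating an operation could eventually revisit an earlier frieze; the real work is therefore to pin down a monotone invariant that rules this out.

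The invariant I would use is the \emph{maximum entry of the quiddity row}. First I would record that gluing preserves the property of being \emph{cofinitely equal to $2$}: inspecting the formula for $(\hat a_i)_{i\in\mathbb{Z}}$ in Theorem~\ref{thmgluefrieze}, the entries with $i\le k-1$ are copied verbatim and those with $i\ge k+3$ are a shifted copy, so if $(a_i)$ equals $2$ outside a finite window then so does $(\hat a_i)$. Consequently, after finitely many gluings applied to $\mathcal{F}_\ast$ the quiddity row is still $2$ outside a finite set, and hence its maximum is a well-defined finite integer. Next I would analyze the effect of one gluing on this maximum. Passing from $(a_i)$ to $(\hat a_i)$ replaces the pair of values $\{a_k,a_{k+1}\}$ by the three values $\{a_k+1,\,1,\,a_{k+1}+1\}$ and leaves every other value unchanged; therefore the maximum never decreases, and if the pair $(a_k,a_{k+1})$ contains an entry realizing the current maximum, then the maximum strictly increases by at least $1$.

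From here the construction is immediate. Define $\mathcal{F}^{(2)}=\mathcal{F}_\ast$, which has quiddity-row maximum $2$, and recursively obtain $\mathcal{F}^{(N+1)}$ from $\mathcal{F}^{(N)}$ by gluing above a pair $(a_k,a_{k+1})$ containing a current maximizer (such a pair exists since the maximum is attained). By the previous paragraph the quiddity-row maximum of $\mathcal{F}^{(N)}$ is $N$ for every $N\ge 2$. Since these maxima are distinct integers, the friezes $\mathcal{F}^{(2)},\mathcal{F}^{(3)},\mathcal{F}^{(4)},\dots$ are pairwise distinct infinite friezes, and there are infinitely many of them. The only genuinely delicate point, which the two preceding steps are designed to address, is guaranteeing distinctness across the whole infinite family rather than merely between consecutive stages; the monotone (and finite) invariant settles exactly this, while the verification that gluing is well defined and outputs an infinite frieze is supplied wholesale by Theorem~\ref{thmgluefrieze}.
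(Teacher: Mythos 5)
Your proof is correct and takes essentially the same route as the paper, which also derives the corollary by applying the gluing operation of Theorem~\ref{thmgluefrieze} repeatedly to the basic frieze $\mathcal{F}_\ast$ and simply asserts that the result \enquote{follows immediately.} Your monotone invariant (the quiddity-row maximum, kept finite by cofinite equality to $2$ and forced to increase by always gluing at a maximizer) is just a careful filling-in of the pairwise-distinctness detail the paper leaves implicit.
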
 

Clearly, as soon as we consider periodic infinite friezes we lose the periodicity after applying the operation of gluing once. To remedy this we define a slightly different operation that preserves periodicity of an infinite frieze using the current set up of gluing.

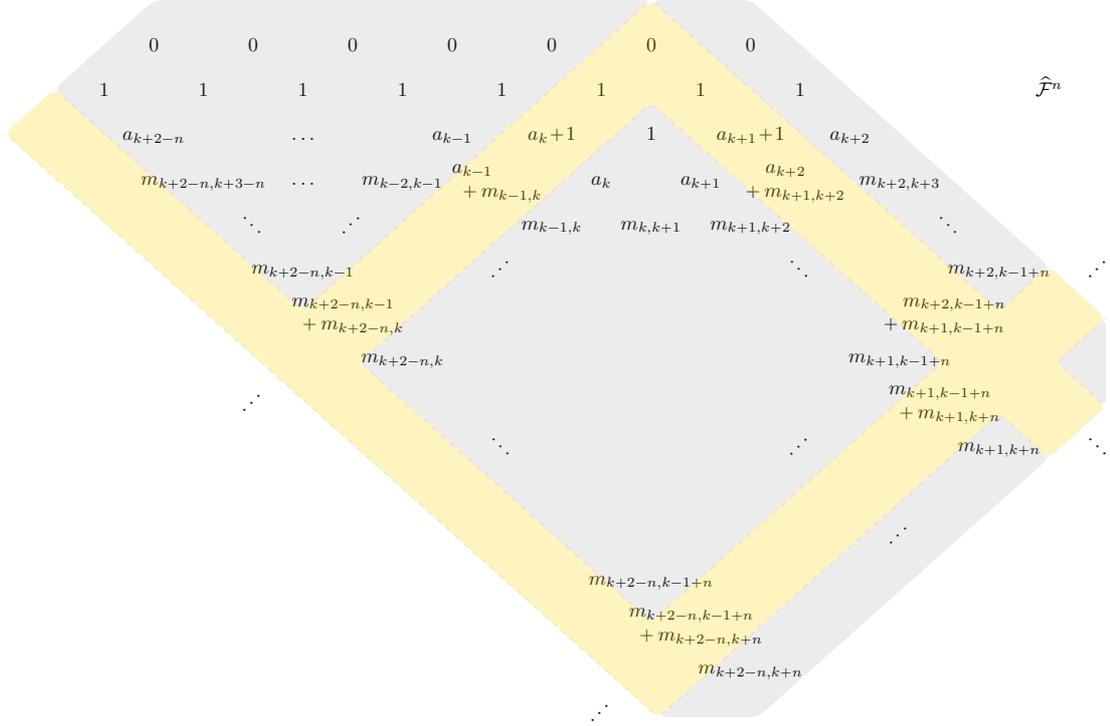
\begin{figure}[t]
\resizebox{\linewidth}{!}{\begin{tikzpicture}[font=\normalsize] 
  \matrix(m) [matrix of math nodes,row sep={2.25em,between origins},column sep={2.5em,between origins},nodes in empty cells]{
&&&&&&&&&&&&&&&&&&&&&&\\
&&&0&&0&&0&&0&&0&&0&&0&&&&&&&\\
&&1&&1&&1&&1&&1&&1&&1&&1&&&&&\widehat{\mathcal{F}}^n&\\
&&&a_{k+2-n}&&&\dots&&&a_{k-1}&&a_k\! +\!1&&1&&a_{k+1}\!+\!1&&a_{k+2}&&&&&\\
&&&&m_{k+2-n,k+3-n}&&\dots&&m_{k-2,k-1}&&\begin{array}{@{}l}a_{k-1}\\\enspace+\,m_{k-1,k}\end{array}&&a_k&&a_{k+1}&&\begin{array}{@{}l}\quad a_{k+2}\\+\,m_{k+1,k+2}\end{array}&&m_{k+2,k+3}&&&&\\
&&&&&\node[rotate=-6.5,shift={(-0.034cm,-0.08cm)}]  {\ddots};&&\node[rotate=6.5,shift={(-0.034cm,-0.08cm)}]  {\iddots};&&&&m_{k-1,k}&&m_{k,k+1}&&m_{k+1,k+2}&&&&\node[rotate=-6.5,shift={(-0.034cm,-0.08cm)}]  {\ddots};&&&\\
&&&&&&m_{k+2-n,k-1}&&&&\node[rotate=6.5,shift={(-0.034cm,-0.08cm)}]  {\iddots};&&&&&&\node[rotate=-6.5,shift={(-0.034cm,-0.08cm)}]  {\ddots};&&&&m_{k+2,k-1+n}&&\node[rotate=6.5,shift={(-0.034cm,-0.08cm)}]  {\iddots};\\
&&&&&&&\begin{array}{@{}l} m_{k+2-n,k-1}\\\enspace+\,m_{k+2-n,k}\end{array}&&&&&&&&&&&&\begin{array}{@{}l}\quad m_{k+2,k-1+n}\\+\,m_{k+1,k-1+n}\end{array}&&&\\
&&&&&&&&m_{k+2-n,k}&&&&&&&&&&m_{k+1,k-1+n}&&&&\\
&&&&&\node[rotate=6.5,shift={(-0.034cm,-0.08cm)}]  {\iddots};&&&&&&&&&&&&&&\begin{array}{@{}l}m_{k+1,k-1+n}\\\enspace+\,m_{k+1,k+n}\end{array}&&&\\
&&&&&&&&&&\node[rotate=-6.5,shift={(-0.034cm,-0.08cm)}]  {\ddots};&&&&&&\node[rotate=6.5,shift={(-0.034cm,-0.08cm)}]  {\iddots};&&&&m_{k+1,k+n}&&\node[rotate=-6.5,shift={(-0.034cm,-0.08cm)}]  {\ddots};\\
&&&&&&&&&&&&&&&&&&&&&&\\
&&&&&&&&&&&&&&&&&&\node[rotate=6.5,shift={(-0.034cm,-0.08cm)}]  {\iddots};&&&&\\
&&&&&&&&&&&&&m_{k+2-n,k-1+n}&&&&&&&&&\\
&&&&&&&&&&&&&&\begin{array}{@{}l}m_{k+2-n,k-1+n}\\\enspace+\,m_{k+2-n,k+n}\end{array}&&&&&&&&\\
&&&&&&&&&&&&&&&m_{k+2-n,k+n}&&&&&&&\\
&&&&&&&&&&&&\node[rotate=6.5,shift={(-0.034cm,-0.08cm)}]  {\iddots};&&&&&&&&&&\\
};

%inserted diagonals
\draw[rounded corners,opacity=0,fill=myyellow,fill opacity=0.25](m-4-1.north)-- (m-3-2.north)--(m-8-7.north)--(m-1-14.north)  -- (m-8-21.north)--(m-7-22.north) --(m-8-23.east) --(m-9-22.east)-- (m-10-23.east)-- (m-11-22.south) --(m-10-21.south) -- (m-17-14.east)--(m-16-13.north) --(m-9-20.west) -- (m-3-14.south)  -- (m-9-8.east)--(m-16-15.north) -- (m-17-14.east) -- cycle;

%the old frieze
\draw[rounded corners,opacity=0,fill=gray,fill opacity=0.15]  (m-9-8.east)--  (m-3-14.south) -- (m-9-20.west)-- (m-15-14.north) -- cycle; 
\draw[rounded corners,opacity=0,fill=gray,fill opacity=0.15]  (m-3-2.north)--(m-8-7.north)--(m-1-14.north) --  (m-1-4.north)--cycle; 
\draw[rounded corners,opacity=0,fill=gray,fill opacity=0.15] (m-1-14.north) --(m-8-21.north)--(m-7-22.north)--(m-1-16.north)-- cycle;
 \draw[rounded corners,opacity=0,fill=gray,fill opacity=0.15]  (m-8-23.east) --(m-9-22.east)-- (m-10-23.east)--cycle;
\draw[rounded corners,opacity=0,fill=gray,fill opacity=0.15] (m-17-14.east)--(m-17-16.east)-- (m-11-22.south) -- (m-10-21.south) --cycle; 

%dashed frame
\draw[semithick,gray,dashed,rounded corners,opacity=0.15]  (m-3-2.north)--(m-8-7.north)-- (m-1-14.north)  -- (m-8-21.north)--(m-7-22.north);
\draw[semithick,gray,dashed,rounded corners,opacity=0.15]  (m-11-22.south) --(m-10-21.south) -- (m-17-14.east)--(m-4-1.north);
\draw[semithick,gray,dashed,rounded corners,opacity=0.15]  (m-8-23.east) --(m-9-22.east)-- (m-10-23.east);
\draw[semithick,gray,dashed,rounded corners,opacity=0.15] (m-15-14.north) --(m-9-20.west) -- (m-3-14.south)  -- (m-9-8.east)--cycle;

\end{tikzpicture}}
\caption{The $(n\!+\!1)$-periodic infinite frieze $\widehat{\mathcal{F}}^n$ obtained by \mbox{$n$-gluing} above $(a_k,a_{k+1})$ in an $n$-periodic infinite frieze $\mathcal{F}$ colored light grey.}\label{fignglueperiodicfrieze}
\end{figure}

\begin{prop}\label{propnglueperiodicfrieze}
Given an $n$-periodic infinite frieze $\mathcal{F}$ with quiddity sequence \mbox{$ q_{\mathcal{F}}=(a_1,a_2, \dots , a_{n})$} and let $k\in\{1,2,\dots,n\}$ ba an integer. Then the $(n\!+\!1)$-tuple
$$\hat{q}_{\mathcal{F}}^n=
\begin{cases}
(a_1\!+\!2,1) &\text{if } n=1,\\
(a_1,\dots,a_{k-1},a_k\!+\!1,1,a_{k+1}\!+\!1,a_{k+2},\dots , a_{n}) &\text{otherwise},
\end{cases}
$$
where indices are taken modulo $n$, leads to an infinite frieze $\widehat{\mathcal{F}}^n$ of period $n\!+\!1$.
\end{prop}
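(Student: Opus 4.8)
The plan is to build $\widehat{\mathcal F}^n=(\hat m_{ij})_{j-i\ge-2}$ explicitly from the entries $m_{ij}$ of $\mathcal F$, reading off the entry formulas from Figure~\ref{fignglueperiodicfrieze}, and then to verify directly that it is an infinite frieze. Concretely, one period of $\widehat{\mathcal F}^n$ is obtained exactly as in a single gluing (Theorem~\ref{thmgluefrieze}): for $j-i\ge 0$ the new entries are, up to the relevant index range, the old entries $m_{ij}$, the inserted value $1$, or sums $m_{i,j-1}+m_{ij}$ and $m_{i,j-1}+m_{i-1,j-1}$ of two neighbouring entries of $\mathcal F$, and this block is then repeated $(n+1)$-periodically. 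With this description two of the three things to prove are immediate: $\widehat{\mathcal F}^n$ is $(n+1)$-periodic by construction (equivalently, its quiddity row is the $(n+1)$-periodic sequence with fundamental domain $\hat q_{\mathcal F}^n$, so the observation following Definition~\ref{defperiodic} already forces period $n+1$), and every entry with $j-i\ge 0$ is a positive integer, being either an entry of $\mathcal F$, a sum of two such, or the inserted $1$.

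It then remains to check the unimodular rule $\hat m_{ij}\hat m_{i+1,j+1}-\hat m_{i+1,j}\hat m_{i,j+1}=1$ for every diamond. I would split the diamonds into three families. First, diamonds that lie entirely inside one of the shaded copies of $\mathcal F$ in Figure~\ref{fignglueperiodicfrieze} inherit the rule directly from $\mathcal F$. Second, diamonds that meet the pair of inserted diagonals away from the period boundary are handled by the very computation used in the proof of Theorem~\ref{thmgluefrieze}: replacing an entry by the appropriate sum of two neighbours and expanding, the extra terms cancel and the defining identity collapses to the unimodular rule already known in $\mathcal F$.

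I expect the main obstacle to be the third family, the \emph{seam} diamonds, which have no analogue in the single gluing of Theorem~\ref{thmgluefrieze}. Because we glue simultaneously above every copy $(a_{k+ln},a_{k+1+ln})$ of the chosen pair, the south-east diagonal inserted above $(a_k,a_{k+1})$ eventually crosses the south-west diagonal inserted above the next copy $(a_{k+n},a_{k+1+n})$, producing entries (such as the sums $m_{k+2-n,k-1}+m_{k+2-n,k}$ and $m_{k+1,k-1+n}+m_{k+1,k+n}$ visible in Figure~\ref{fignglueperiodicfrieze}) that are affected by two insertions at once. Verifying the unimodular rule at these crossing diamonds is the crux; I would carry it out by expressing the four entries of such a diamond through entries of $\mathcal F$ and reducing the identity to the unimodular rule together with the relations of Lemma~\ref{lemrelationqsdiag} applied within one period of $\mathcal F$. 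A useful sanity check on these formulas is the diagonal-sum interpretation of Corollary~\ref{cordiagonalsum}: each inserted diagonal should be the shifted sum of its two neighbours, which is exactly the content of the case $a_{k-1}=1$ there.

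Finally, the case $n=1$ must be treated on its own, because then consecutive glue positions are adjacent and the inserted wedges overlap. Each original entry $a_1$ is then incremented once by the gluing on its left and once by the gluing on its right, so it becomes $a_1+2$, while a single $1$ is inserted between consecutive copies; this yields precisely the $2$-periodic quiddity sequence $(a_1+2,1)$, whose frieze one checks directly by the same unimodular computation. Conceptually, the whole proposition can also be seen as iterating Theorem~\ref{thmgluefrieze} above the positions $k+l(n+1)$ for all $l$: each single gluing leaves the part to its left unchanged and only reindexes the part to its right, and only finitely many of them influence any fixed finite region, so the explicit periodic array above is the stable limit of these gluings; but turning this limiting picture into the clean entrywise verification of the seam diamonds is where the real work lies.
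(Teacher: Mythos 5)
Your proposal is correct in substance, but it inverts the paper's emphasis: the argument you relegate to a closing conceptual remark \emph{is} the paper's entire proof, while the direct entrywise verification you present as the main route --- whose crux, the seam diamonds, you leave unexecuted --- is exactly what the paper avoids. The published proof simply applies Theorem~\ref{thmgluefrieze} to glue above every pair $(a_{k+ln},a_{k+1+ln})$, $l\in\mathbb{Z}$, and reads off the resulting quiddity row; since each later gluing is applied to an intermediate frieze that already contains the previously inserted diagonals, your seam diamonds are, at the moment the neighbouring gluing is performed, ordinary diamonds of an honest infinite frieze, and Theorem~\ref{thmgluefrieze} certifies the unimodular rule there with no further computation. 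So your worry that the limiting picture still requires ``the clean entrywise verification of the seam diamonds'' is misplaced: the iteration is precisely what makes that verification unnecessary, and your locality observation --- only finitely many gluings affect any fixed finite region, so the periodic array is the stable limit --- is all that is needed to legitimize the infinitely many gluings (a point the paper in fact glosses over, so your remark is if anything more careful than the published proof on that step). If you did want to push the direct route through, note that your inventory of new entries is incomplete: where an inserted {\sc se}-diagonal crosses the inserted {\sc sw}-diagonal of the next copy, the entries are sums of \emph{four} entries of $\mathcal{F}$ (the case $i\equiv k+2$, $j\equiv k$ recorded in Corollary~\ref{cornglueperiodicfrieze}), not only the two-term sums you list, so the diamond computations at the crossings are strictly heavier than those in Theorem~\ref{thmgluefrieze}, though they do reduce to the unimodular rule and Lemma~\ref{lemrelationqsdiag} as you predict; this extra work is what the iterative argument buys you out of. Your separate treatment of $n=1$, with each $a_1$ incremented by the gluings on both sides to give $(a_1+2,1)$, agrees with the paper.
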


For an $n$-periodic infinite frieze $\mathcal{F}$ with quiddity sequence $q_{\mathcal{F}}$ the operation mapping  $\mathcal{F}$ to the $(n\!+\!1)$-periodic infinite frieze $\widehat{\mathcal{F}}^n$ with quiddity sequence $\hat{q}_{\mathcal{F}}^n$, defined in Proposition~\ref{propnglueperiodicfrieze}, is called \emph{$n$-gluing above the pair $(a_k,a_{k+1})$}.

\begin{proof}
Let $(a_i)_{i\in\mathbb{Z}}$ be the quiddity row of an $n$-periodic frieze $\mathcal{F}$. We choose $k\in\{1,2,\dots,n\}$. Then for every integer $l$, we glue above the pair $(a_{k+ln},a_{k+1+ln})$ as defined in Theorem~\ref{thmgluefrieze} and obtain an infinite frieze, denoted by $\widehat{\mathcal{F}}^n$. By construction, the quiddity row of $\widehat{\mathcal{F}}^n$ is $n\!+\!1$ periodic and determined, up to cyclic permutation, by the $2$-tuple $(a_1\!+\!2,1)$, if $n=1$, or by the $(n\!+\!1)$-tuple $(a_1, \dots ,a_{k-1},a_k\!+\!1,1,a_{k+1}\!+\!1,a_{j+2},\dots , a_{n})$ otherwise. Hence the result.
\end{proof}

Note that the operation of $n$-gluing may provide an infinite frieze of period dividing $n\!+\!1$ but strictly smaller than $n\,+\,1$. Note also that $n$-gluing only depends on the pair $(a_k,a_{k+1})$ of integers not on the choice of the quiddity sequence.

Clearly, there are other operations on periodic infinite friezes preserving periodicity. E.g.\ given a quiddity sequence $(a_1,a_2, \dots , a_{n})$ of an $n$-periodic infinite frieze the sequence $(a_1, \dots , a_k\!+~\!1,1,\linebreak a_{k+1}\!+~\!1, \dots , a_{n},a_1,\dots,a_n)$ also leads to a periodic infinite frieze, namely of period $2n\!+\!1$, and so on.

The next corollary describes the modification to the entries of the initial periodic infinite friezes caused by $n$-gluing, see Figure~\ref{fignglueperiodicfrieze}. It we will be convenient to use the following notation: for $i,x\in\mathbb{Z}$, we set $i_x=i-t$ whenever $x+(t-1)(n+1)< i\le x+t(n+1)$.
 
\begin{cor}\label{cornglueperiodicfrieze}
Let $\widehat{\mathcal{F}}^n=(\hat m_{ij})_{j-i\ge -2}$ be the $(n\!+\!1)$-periodic infinite frieze obtained from an\linebreak $n$-periodic infinite frieze $\mathcal{F}=(m_{ij})_{j-i\ge -2}$ with quiddity sequence $q_{\mathcal{F}}=(a_1,a_2, \dots ,a_{n})$ by $n$-gluing above the pair $(a_k,a_{k+1})$, for some $k\in\{1,2,\dots,n\}$. Then
$$ \hat m_{ij}=\begin{cases}
m_{i_{k+2}j_k} &\text{if } i\not \equiv k+2, j \not \equiv k,\\
m_{i_{k+2},j_k-1}+m_{i_{k+2}j_k} &\text{if } i\not \equiv k+2, j \equiv k,\\
m_{i_{k+2}-1,j_k}+m_{i_{k+2}j_k} &\text{if } i \equiv k+2,j \not \equiv k,\\
m_{i_{k+2}-1,j_k-1}+m_{i_{k+2}j_k}+m_{i_{k+2}-1,j_k}+m_{i_{k+2},j_k-1} &\text{if } i \equiv k+2, j \equiv k,
\end{cases}$$
where $\equiv$ means equal reduced modulo $n+1$.
\end{cor}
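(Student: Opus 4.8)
The plan is to unwind the definition of $n$-gluing back to the single gluing operation of Theorem~\ref{thmgluefrieze} and then read off the entries from the explicit case-distinction formula for $\hat m_{ij}$ given in its proof. By Proposition~\ref{propnglueperiodicfrieze} (and its proof), $\widehat{\mathcal{F}}^n$ is obtained by gluing above every pair $(a_{k+ln},a_{k+1+ln})$ for all $l\in\mathbb{Z}$ simultaneously. Since each gluing inserts one new {\sc se}-diagonal and one new {\sc sw}-diagonal, after performing infinitely many of them the rows and columns of $\widehat{\mathcal{F}}^n$ are indexed by $\mathbb{Z}$, but they are ``denser'' than those of $\mathcal{F}$: between the coordinate of the original $a_k$ and that of $a_{k+1}$ a fresh coordinate has been wedged in (once per period). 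The first thing I would do is make this reindexing completely precise. The auxiliary notation $i_x=i-t$, where $t$ counts how many full periods of length $n+1$ lie (relative to the offset $x$) below $i$, is exactly the bookkeeping device that translates a coordinate $i$ in $\widehat{\mathcal{F}}^n$ back to the corresponding coordinate $i_x$ in $\mathcal{F}$: each time we cross a multiple of $n+1$ we have passed one inserted diagonal, so we must subtract one to recover the original index. The subscripts $k+2$ on the row index and $k$ on the column index are chosen so that the inserted {\sc se}-diagonal sits in row-position $\equiv k+2$ and the inserted {\sc sw}-diagonal in column-position $\equiv k$, matching the roles of $f(a_{k+2})$ and $\bar f(a_k)$ in Figure~\ref{figgluefrieze}.

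With the reindexing fixed, I would next verify the generic case $i\not\equiv k+2$, $j\not\equiv k$. Here the diamond containing $\hat m_{ij}$ lies entirely away from the two inserted diagonals, so by the ``if $j\le k-1$'' / ``if $i\ge k+3$'' branches of the single-gluing formula the entry is simply the corresponding unglued entry $m_{i_{k+2}j_k}$. The point to check is that the double use of the reindexing $i\mapsto i_{k+2}$, $j\mapsto j_k$ correctly strips off both insertions, and that periodicity of $\mathcal{F}$ lets us reduce $m_{i_{k+2}j_k}$ to a representative entry of the fundamental domain. This is essentially a translation of the single-gluing bookkeeping, applied once per period, and periodicity guarantees consistency across the infinitely many gluings.

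The remaining three cases correspond to a diamond meeting the inserted {\sc sw}-diagonal ($j\equiv k$), the inserted {\sc se}-diagonal ($i\equiv k+2$), or both (at the common apex, $i\equiv k+2$ and $j\equiv k$). For these I would appeal directly to the middle branches of the single-gluing formula $\hat m_{ij}=m_{i,j-1}+m_{ij}$ (new {\sc sw}-diagonal), $\hat m_{ij}=m_{i,j-1}+m_{i-1,j-1}$ and $\hat m_{ij}=m_{i-1,j-1}$ (around the new {\sc se}-diagonal), reinterpreted after the period-by-period reindexing; this yields the stated sums $m_{i_{k+2},j_k-1}+m_{i_{k+2}j_k}$ and $m_{i_{k+2}-1,j_k}+m_{i_{k+2}j_k}$ respectively, and for the apex the four-term sum, which is just the fact that the new entry sitting at the intersection of both inserted diagonals collects contributions from all four neighboring original entries (consistent with the ``sum of the two closest entries'' description of the inserted diagonals in the remark after Theorem~\ref{thmgluefrieze}, applied in both diagonal directions). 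The main obstacle I expect is bookkeeping rather than conceptual: getting the off-by-one shifts in $i_{k+2}$ and $j_k$ exactly right at the boundaries (i.e.\ at indices congruent to $k+2$ or $k$ modulo $n+1$), so that the formula for a coordinate landing \emph{on} an inserted diagonal dovetails with the formula for the adjacent coordinate just off it. I would handle this by checking one period carefully against Figure~\ref{fignglueperiodicfrieze}, where all four types of entry appear explicitly, and then invoke $(n+1)$-periodicity of $\widehat{\mathcal{F}}^n$ to extend to all $i,j$.
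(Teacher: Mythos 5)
Your proposal is correct and follows exactly the route the paper intends: the paper states this corollary without a written proof, as the bookkeeping consequence of Proposition~\ref{propnglueperiodicfrieze} (gluing above every pair $(a_{k+ln},a_{k+1+ln})$ simultaneously) combined with the explicit case formula for $\hat m_{ij}$ in the proof of Theorem~\ref{thmgluefrieze}, with $i_{k+2}$ and $j_k$ recording how many inserted {\sc se}- and {\sc sw}-diagonals the indices have crossed. You correctly identify all four branches, including the one genuinely multi-period phenomenon -- the four-term case arises only because an inserted {\sc se}-diagonal from one period crosses an inserted {\sc sw}-diagonal from a later period, which never happens in a single gluing -- so nothing essential is missing.
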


Before considering the reverse operation to gluing we illustrate the operation $n$-gluing with an example. The example points out that $n$-gluing preserves more than the fist two rows of the initial friezes. Diamond-shaped fragments of the old friezes appear in the new frieze. Moreover, the example already illustrates how the reverse operation to $n$-gluing will work.

\begin{ex}\label{ex1}
Let $n=3$ and consider the basic infinite frieze $\mathcal{F}_\ast$ given in Figure~\ref{figbasicfrieze} with quiddity sequence $q_{\mathcal{F}_\ast}=(2,2,2)$. Now we perform a $3$-gluing above the pair $(a_2,a_3)$. This leads to the new quiddity sequence $\hat{q}_{\mathcal{F}_\ast}^3=(2,3,1,3)$ determining the infinite frieze $\widehat{\mathcal{F}}_\ast^3$ of period $4$, as shown in the figure below. The inserted pairs of diagonals are colored yellow. In particular, outside these diagonals $\widehat{\mathcal{F}}_\ast^3$ coincides with $\mathcal{F}_\ast$.

{\hfil \scalebox{.9}{\begin{tikzpicture}[font=\normalsize] 
  \matrix(m) [matrix of math nodes,row sep={1.5em,between origins},column sep={1.5em,between origins},nodes in empty cells]{
0&&0&&0&&0&&0&&0&&0&&0&&&&&&&&\\
&1&&1&&1&&1&&1&&1&&1&&1&&&&&&&\\
&&2&&3&&1&&3&&2&&3&&1&&3&&&&&&\\
&&&5&&2&&2&&5&&5&&2&&2&&5&&&&&\\
\widehat{\mathcal{F}}_\ast^3&&\node{\cdots};&&3&&3&&3&&12&&3&&3&&3&&12&&\node{\cdots};&&\\
&&&&&4&&4&&7&&7&&4&&4&&7&&7&&&\\
&&&&&&5&&9&&4&&9&&5&&9&&4&&9&&\\
&&&&&&&11&&5&&5&&11&&11&&5&&5&&11&\\
&&&&&&&&6&&6&&6&&24&&6&&6&&6&&24\\[1mm]
&&&&&&&&&&&\node[rotate=-6.5,shift={(-0.034cm,-0.08cm)}]  {\ddots};&&&&&&&&&&\node[rotate=-6.5,shift={(-0.034cm,-0.08cm)}]  {\ddots};&\\
};

%fundamental region
\draw (m-10-13) node[myblue,shift={(0.2cm,0.1cm)}]{$\mathcal{D}$};
\draw[opacity=0,rounded corners,fill=myblue,fill opacity=0.15] ($(m-1-1.north west)+(-0.2cm,0cm)$) -- ($(m-9-9.south west)+(0.1cm,0.05cm)$) -- ($(m-9-16.south west)+(0.05cm,0.05cm)$) -- (m-1-7.north) -- cycle;

\draw[rounded corners,opacity=0,fill=myyellow,fill opacity=0.3]  (m-4-4.west)-- (m-1-7.north) -- (m-9-15.north)-- (m-5-19.north) -- (m-5-19.east)-- (m-9-15.south) -- (m-1-7.south) -- (m-4-4.south)-- cycle;

\draw[rounded corners,opacity=0,fill=myyellow,fill opacity=0.3] (m-8-8.west) -- (m-1-15.north) -- (m-9-23.east)--(m-9-23.south)-- (m-1-15.south) -- (m-8-8.south) -- cycle;

\end{tikzpicture}}}
\end{ex}

%%%%%%%%%
\subsection{Cutting}\label{seccut}
%%%%%%%%%

The second operation on infinite friezes we define produces a new infinite frieze starting from a given infinite frieze whenever an entry $1$ appears in its quiddity row. The analogue for this in the case of tame $\mathrm{SL}_2$-tilings was considered in \cite[Lemma 5]{BR}.

\begin{thm}\label{thmcutfrieze}
Let $\mathcal{F}$ be an infinite frieze with quiddity row $(a_i)_{i\in\mathbb{Z}}$ such that $a_k=1$ for some integer $k$. Then  the sequence $(\check a_i)_{i\in\mathbb{Z}}$ with
$$\check a_i=\begin{cases}
a_i&\text{if }i\leq k-2,\\
a_{k-1}\!-\!1&\text{if }i=k-1,\\
a_{k+1}\!-\!1&\text{if }i=k,\\
a_{i+1}&\text{if }i\geq k+1,
\end{cases}$$
is the quiddity row of an infinite frieze $\widecheck{\mathcal{F}}$.
\end{thm}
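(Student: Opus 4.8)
The plan is to mirror the proof of the gluing Theorem~\ref{thmgluefrieze}: define an explicit array $\widecheck{\mathcal{F}}=(\check m_{ij})_{j-i\ge -2}$ whose entries are built from the entries $m_{ij}$ of $\mathcal{F}$, set $\check m_{i,i-2}=0$, $\check m_{i,i-1}=1$, verify positivity of the remaining entries, and then check the unimodular rule diamond by diamond. The guiding picture is that cutting is the reverse of gluing: since $a_k=1$ sits in the quiddity row, the \textsc{se}-diagonal $f(a_k)$ and the \textsc{sw}-diagonal $\bar f(a_k)$ through that entry $1$ are exactly the pair of diagonals that gluing would have inserted, so $\widecheck{\mathcal{F}}$ should be obtained from $\mathcal{F}$ by \emph{deleting} this pair of diagonals and simultaneously decrementing the two neighboring quiddity entries by $1$ (hence $a_{k-1}\!-\!1$ and $a_{k+1}\!-\!1$). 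Concretely I would write, for $j-i\ge 0$, something like
$$
\check m_{ij}=\begin{cases}
m_{ij}&\text{if }j\le k-2,\\
m_{i,j}-m_{i,j+1} &\text{if }j=k-1 \text{ (the column just left of the cut)},\\
m_{i,j+1}&\text{if }i\le k-1 \text{ and } j\ge k,\\
m_{i+1,j+1}-m_{i+1,j} &\text{if }i=k,\\
m_{i+1,j+1}&\text{if }i\ge k+1,
\end{cases}
$$
after checking the exact index bookkeeping against Figure~\ref{figgluefrieze} read in reverse.

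The verification then splits into the same two geometric regions as the gluing proof. Away from the cut, the unimodular rule for $\widecheck{\mathcal{F}}$ is literally inherited from that of $\mathcal{F}$, since the relevant entries are unchanged (merely reindexed by an $\pm1$ shift across the deleted diagonals). The substantive diamonds are those straddling where the diagonals through $a_k$ used to be: here one computes $\check m_{ij}\check m_{i+1,j+1}-\check m_{i+1,j}\check m_{i,j+1}$ by expanding each modified entry as a difference of two entries of $\mathcal{F}$, and then telescoping. This is the step where the condition $a_k=1$ is essential: the relation $a_k=1$, via Lemma~\ref{lemrelationqsdiag}\ref{relqsda}) applied along the diagonals through $a_k$, forces the two flanking diagonals $f(a_{k-1})$, $f(a_{k+1})$ (respectively $\bar f$) to satisfy the ``shifted sum'' identity of Corollary~\ref{cordiagonalsum}, which is precisely what makes the telescoped products collapse back to $1$.

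The two points requiring genuine care, rather than routine algebra, are first \textbf{positivity} of the decremented entries, i.e.\ that $\check m_{ij}>0$ for $j-i\ge 0$; in particular that $a_{k-1}-1$ and $a_{k+1}-1$ remain positive (this uses the remark, recorded just after Definition~\ref{definffrieze}, that an entry $1$ in a non-trivial row cannot have a neighbor equal to $1$, so both $a_{k-1}$ and $a_{k+1}$ are at least $2$), and more generally that the subtracted quantities $m_{i,j}-m_{i,j+1}$ and $m_{i+1,j+1}-m_{i+1,j}$ stay strictly positive — this should follow from the fact that along a fixed \textsc{se}-diagonal the entries strictly increase once one passes the initial $1$, again a consequence of the unimodular rule together with $a_k=1$. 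The second delicate point is simply getting the index shifts in the five cases exactly right so that the deleted diagonals are $f(a_k)$ and $\bar f(a_k)$ and the glue-then-cut composition is genuinely the identity on quiddity rows; I expect the positivity check across the cut, and confirming that cutting is a true two-sided inverse of gluing above $(\check a_{k-1},\check a_{k})$, to be the main obstacle, whereas the unimodular computation is a direct transcription (in reverse) of the one already carried out for $\widehat{\mathcal{F}}$.
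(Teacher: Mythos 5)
Your overall strategy---reverse the gluing construction, remove a pair of diagonals, and verify the unimodular rule across the cut via the shifted-sum identities that $a_k=1$ forces through Corollary~\ref{cordiagonalsum}---is the paper's strategy, but you have misidentified \emph{which} pair of diagonals gets removed, and your explicit formulas fail as a consequence. Gluing inserts the diagonals through the \emph{neighbours} of the new entry $1$ (through $a_k\!+\!1$ and $a_{k+1}\!+\!1$ in Theorem~\ref{thmgluefrieze}), so cutting above $a_k=1$ must suppress the {\sc sw}-diagonal $\bar f(a_{k-1})$ and the {\sc se}-diagonal $f(a_{k+1})$, \emph{not} the diagonals through $a_k$ itself; moreover, no entry of $\mathcal{F}$ is modified at all. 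Indeed, since $m_{i,k-1}=m_{i,k-2}+m_{ik}$ and $m_{k+1,j}=m_{kj}+m_{k+2,j}$ (your shifted-sum identities), deleting those two diagonals leaves an array whose new quiddity entries at the seam are the \emph{pre-existing} entries $m_{k-1,k}=a_{k-1}\!-\!1$ and $m_{k,k+1}=a_{k+1}\!-\!1$; no decrementing is performed anywhere. Your formula subtracts in the wrong direction: at $j=k-1$ you set $\check m_{i,k-1}=m_{i,k-1}-m_{ik}$, which by the shifted-sum identity equals $m_{i,k-2}$, so your array duplicates the column $j=k-2$ and gives $\check m_{k-1,k-1}=a_{k-1}-(a_{k-1}-1)=1$ instead of the required $a_{k-1}-1$ (wrong unless $a_{k-1}=2$); similarly $m_{k+1,j+1}-m_{k+1,j}=m_{k,j+1}-m_{kj}+m_{k+2,j+1}-m_{k+2,j}$ is not $m_{k,j+1}$. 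Thus the array you define does not have the claimed quiddity row, and the unimodular rule also breaks at the seam. If you insist on subtraction forms, the correct ones are $m_{i,k-1}-m_{i,k-2}=m_{ik}$ and $m_{k+1,j+1}-m_{k+2,j+1}=m_{k,j+1}$---that is, exactly unchanged entries of $\mathcal{F}$, which is how the paper's proof proceeds: it fixes $\check m_{ij}=m_{ij}$ to the left of $\bar f(a_{k-1})$ and checks the two cutting lines by substituting the shifted sums into the unimodular relations of $\mathcal{F}$, precisely the ``telescoping'' you anticipated.

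A second, self-inflicted gap is your positivity argument. With the correct construction every $\check m_{ij}$ with $j-i\ge 0$ is literally an entry $m_{i'j'}$ of $\mathcal{F}$ with $j'-i'\ge 0$, so positivity---including $a_{k\pm1}-1>0$---is automatic and needs no separate discussion. Your proposal instead rests on the claim that along a fixed {\sc se}-diagonal the entries strictly increase once past the initial $1$; this is false in general, as Figure~\ref{figexfrieze5p2} already shows (the diagonal $4,11,29,18,7,\dots$ decreases), since diagonals of a periodic infinite frieze need not be monotone within a period. So as written the positivity step has no valid support, although with the corrected identification of the deleted diagonals it becomes unnecessary.
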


We say that that $\widecheck{\mathcal{F}}$ is obtained from $\mathcal{F}$ by \emph{cutting above $a_k=1$}, where $\widecheck{\mathcal{F}}$ is as given in Theorem~\ref{thmcutfrieze}. 

\begin{proof}
Let $\mathcal{F}=(m_{ij})_{j-i\ge -2}$ be an infinite frieze and let $(a_i)_{i\in\mathbb{Z}}$ be its quiddity row such that $a_k=1$ for some $k\in\mathbb{Z}.$ By the second part of Corollary~\ref{cordiagonalsum}, the {\sc se}-diagonal $f(a_{k+1})$ through $a_{k+1}$ is given by the shifted sum of $f(a_k)$ and $f(a_{k+2})$, that is $m_{k+1,j}= m_{kj}+ m_{k+2,j}.$ Analogously, for the {\sc sw}-diagonal $\bar f(a_{k-1})$ through $a_{k-1}$ we have $m_{i,k-1}= m_{i,k-2}+ m_{ik}.$ 

We now consider the array $\widecheck{\mathcal{F}}=(\check m_{ij})_{j-i\ge -2}$ obtained from $\mathcal{F}$ by suppressing $f(a_{k+1})$ and $\bar f(a_{k-1})$. For the labeling we fix the entries of $\mathcal{F}$ that are on the left of $\bar f(a_{k-1})$, in other words $\check m_{ij}=m_{ij}$ for all $,i\le k,j\le k-2$. Clearly, $\check m_{i,i-2}=0$ and since $\check m_{k,k-1}=a_k=1$ we have $\check m_{i,i-1}=1$ for all $i$. Moreover, we have $\check m_{ij}>0$ for all $j-i\ge 0.$ It remains to show the unimodular rule still holds along the two cutting lines, i.e.\ $m_{i,k-2}m_{i+1,k}-m_{i+1,k-2}m_{ik}=1$ and $m_{kj}m_{k+2,j+1}-m_{k+2,j}m_{k,j+1}=1$. Since $\mathcal{F}$ satisfies the unimodular rule and from above we get $1=m_{i,k-2}m_{i+1,k-1}-m_{i+1,k-2}m_{i,k-1}=m_{i,k-2}\left(m_{i+1,k-2}+ m_{i+1,k}\right)-m_{i+1,k-2}\left(m_{i,k-2}+ m_{ik}\right)=m_{i,k-2}m_{i+1,k}-m_{i+1,k-2}m_{ik}$ and $1=m_{kj}m_{k+1,j+1}-m_{k+1,j}m_{k,j+1}=m_{kj}\left( m_{k,j+1}+ m_{k+2,j+1}\right)-\left( m_{kj}+ m_{k+2,j}\right)m_{k,j+1}=m_{kj}m_{k+2,j+1}-m_{k+2,j}m_{k,j+1}.$ This completes the proof.
\end{proof}

\begin{rem}
In reverse to gluing, the effect caused on the initial infinite frieze by cutting is that a pair of diagonals is suppressed and the remaining entries stay unchained. Namely, the {\sc se}-diagonal through the right neighboring entry of the entry $1$ we cut above and the {\sc sw}-diagonal through the left neighboring entry of the same entry $1$. The symbol\, $\widecheck{\ }$\, indicates that two diagonals are removed. Clearly, first gluing above a pair and then cutting above the new entry that occurred in the quiddity sequence after gluing yields the original infinite frieze.
\end{rem}

Similarly, as in Section \ref{secglue}, the periodicity is lost if we consider periodic infinite friezes after cutting once. Thus, we define an operation on periodic infinite friezes that preserves periodicity.  Recall that the two neighbors of an entry $1$ in a given row are both strictly bigger than $1$. Thus, if a periodic infinite frieze has an entry $1$ in its quiddity sequence, the period has to be at least $2$.

\begin{prop}\label{propncutperiodicfrieze}
Given a quiddity sequence $q_{\mathcal{F}}=(a_1,a_2, \dots, a_{n})$ of an $n$-periodic infinite frieze $\mathcal{F}$ such that $a_k=1$ for some $k\in\{1,2,\dots, n\}$, the $(n\!-\!1)$-tuple
$$
\check{q}_{\mathcal{F}}^n=
\begin{cases}
(a_{k+1}\!-\!2) &\text{if } n=2,\\
(a_1, \dots ,a_{k-2},a_{k-1}\! -\!1,a_{k+1}\!-\!1,a_{k+2} ,\dots,  a_{n}) &\text{otherwise},
\end{cases}
$$
where indices are reduced modulo $n$, yields an infinite frieze $\widecheck{\mathcal{F}}^n$ of period $n\!-\!1$.
\end{prop}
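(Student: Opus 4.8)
The plan is to reduce Proposition~\ref{propncutperiodicfrieze} to the non-periodic cutting operation of Theorem~\ref{thmcutfrieze}, exactly as Proposition~\ref{propnglueperiodicfrieze} was reduced to Theorem~\ref{thmgluefrieze}. First I would take the quiddity row $(a_i)_{i\in\mathbb{Z}}$ of the $n$-periodic infinite frieze $\mathcal{F}$, which by periodicity satisfies $a_i=a_{i+ln}$ for all $l$. Since $a_k=1$ in the quiddity sequence, we have $a_{k+ln}=1$ for every integer $l$, so an entry $1$ appears periodically along the quiddity row. The idea is then to cut above each of these entries $a_{k+ln}=1$ simultaneously, applying Theorem~\ref{thmcutfrieze} at every such position, to obtain an array $\widecheck{\mathcal{F}}^n$.

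The main point to verify is that cutting at all the positions $k+ln$ at once still produces a genuine infinite frieze. Each individual cut above $a_{k+ln}=1$ is licensed by Theorem~\ref{thmcutfrieze} and suppresses the pair consisting of the {\sc se}-diagonal $f(a_{k+1+ln})$ and the {\sc sw}-diagonal $\bar f(a_{k-1+ln})$, leaving the remaining entries unchanged along the two cutting lines. Because distinct cuts act on diagonals separated by a full period $n$, and the two neighbors $a_{k-1}$ and $a_{k+1}$ of any entry $1$ are both strictly bigger than $1$ (so $a_{k-1}-1\ge 1$ and $a_{k+1}-1\ge 1$ remain positive), the cuts do not interfere with one another: the verification of the unimodular rule along each cutting line is a purely local computation identical to the one carried out in the proof of Theorem~\ref{thmcutfrieze}, and it goes through verbatim at every period. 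Hence $\widecheck{\mathcal{F}}^n$ is an infinite frieze.

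It then remains to read off the quiddity row of $\widecheck{\mathcal{F}}^n$ and confirm its periodicity. Within one period, cutting above $a_k=1$ replaces the block $(\dots,a_{k-1},1,a_{k+1},\dots)$ by $(\dots,a_{k-1}-1,a_{k+1}-1,\dots)$, so that a single entry is removed and the two flanking entries are each decreased by $1$. Since this same replacement happens identically in every period, the resulting quiddity row is invariant under translation by $n-1$, and its fundamental block is precisely the $(n-1)$-tuple $(a_1,\dots,a_{k-2},a_{k-1}-1,a_{k+1}-1,a_{k+2},\dots,a_n)$, with indices reduced modulo $n$; thus $\widecheck{\mathcal{F}}^n$ has period $n-1$. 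The special case $n=2$ must be treated separately: here the quiddity sequence is $(a_1,a_2)=(1,a_2)$ up to cyclic equivalence, and cutting collapses the two flanking entries, each adjacent on both sides to the periodic copies of the entry $1$, so that both decrements apply to the single remaining entry $a_{k+1}$, giving the $1$-tuple $(a_{k+1}-2)$; one checks directly that $a_{k+1}-2\ge 1$ using that $a_{k+1}>1$ together with the structural fact that the two neighbors of an entry $1$ have product strictly bigger than $4$.

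The step I expect to be the main obstacle is the bookkeeping for the $n=2$ case and, more generally, making precise the claim that the simultaneous cuts are independent. For $n\ge 3$ the cutting lines at different periods are comfortably far apart, so independence is clear; the delicate point is that when $n$ is small a single surviving entry can be the left neighbor of one copy of the entry $1$ and the right neighbor of the next copy, so both decrements must be attributed to it. Handling this correctly, and confirming that positivity of all resulting entries is preserved, is the only genuinely non-routine aspect of the argument.
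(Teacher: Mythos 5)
Your overall strategy is exactly the paper's: the paper proves this proposition by iteratively applying Theorem~\ref{thmcutfrieze} at the positions $a_{k+ln}=1$, $l\in\mathbb{Z}$, in direct analogy with the proof of Proposition~\ref{propnglueperiodicfrieze}, and your reading of the resulting quiddity row --- including the $n=2$ case, where both decrements land on the single surviving entry --- agrees with the paper. However, your justification that the cuts \enquote{do not interfere with one another} because they are \enquote{separated by a full period} is wrong as stated, and in the simultaneous framing this is a genuine gap. The cut above $a_{k+ln}$ suppresses the {\sc se}-diagonal $f(a_{k+1+ln})$ and the {\sc sw}-diagonal $\bar f(a_{k-1+ln})$; these are infinite diagonals, and for any two distinct cuts $l<l'$ the removed {\sc se}-diagonal $f(a_p)$ with $p=k+1+ln$ crosses the removed {\sc sw}-diagonal $\bar f(a_q)$ with $q=k-1+l'n$ at the positive entry $m_{pq}$ deep inside the frieze, no matter how large $n$ is: the cutting lines are far apart only along the quiddity row, not as subsets of the frieze. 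At such a crossing the new array has a diamond with corners $m_{p\pm 1,q\pm 1}$ straddling \emph{two} removed diagonals simultaneously, a configuration not covered \enquote{verbatim} by the single-line computation in the proof of Theorem~\ref{thmcutfrieze}.

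The gap closes in either of two ways. Iteratively (the paper's wording): cut one entry $1$ at a time, so that each application of Theorem~\ref{thmcutfrieze} is made to a genuine infinite frieze (the other $1$'s survive each cut, since only the immediate neighbors of the removed $1$ change); because cutting leaves all surviving entries unchanged and parallel removed diagonals are at distance $n\ge 2$, any fixed diamond of the limit array straddles at most one removed {\sc se}- and one removed {\sc sw}-diagonal, hence is a diamond of the frieze obtained after the at most two relevant cuts, and all later cuts merely relabel. Alternatively, check the crossing diamond directly: by Corollary~\ref{cordiagonalsum}, $m_{pj}=m_{p-1,j}+m_{p+1,j}$ and $m_{iq}=m_{i,q-1}+m_{i,q+1}$, whence
\begin{align*}
m_{p-1,q-1}m_{p+1,q+1}-m_{p+1,q-1}m_{p-1,q+1}&=m_{p-1,q-1}m_{p,q+1}-m_{p,q-1}m_{p-1,q+1}\\
&=m_{p-1,q-1}m_{pq}-m_{p,q-1}m_{p-1,q}=1.
\end{align*}
One further remark on your $n=2$ discussion: verifying $a_{k+1}-2\ge 1$ is neither needed nor sufficient. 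It is not needed because every entry of $\widecheck{\mathcal{F}}^n$ is an entry of $\mathcal{F}$, so positivity is inherited wholesale; and the bound $\ge 1$ alone would not suffice if one were reconstructing the frieze from its quiddity row, since the constant quiddity row $(1)$ does not determine a frieze --- indeed $(1,3)$ is not the quiddity sequence of any $2$-periodic infinite frieze, as Lemma~\ref{lemrelationqsdiag}~\ref{relqsdb}) forces $m_{i,i+4}=0$ there, so under the hypothesis that $\mathcal{F}$ is a frieze one automatically has $a_{k+1}\ge 4$ and the cut frieze has quiddity entry $a_{k+1}-2\ge 2$.
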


We say that $\widecheck{\mathcal{F}}^n$ in Proposition~\ref{propncutperiodicfrieze} is obtained from $\mathcal{F}$ by \emph{$n$-cutting above $a_k$}. The proof of Proposition~\ref{propncutperiodicfrieze} works quite similar as the one of Proposition~\ref{propnglueperiodicfrieze} by iteratively applying Theorem~\ref{thmcutfrieze}. Note again that, strictly speaking, the operation of $n$-cutting leads to a periodic infinite frieze with period a divisor of $n\!-\!1$, possibly smaller than $n\!-\!1$.

%%%%%%%%%
%
\section{Arithmetic friezes}\label{secarithmeticfriezes}
%
%%%%%%%%%

It is natural to ask which sequences yield a periodic infinite frieze. A first part of the answer is given in the following section of this article. A complete characterization via triangulations is given by the subsequent article \cite{BPT}, building on our results. We generalize the basic construction for obtaining a finite frieze from triangulated polygons. Instead of triangulations of polygons we consider triangulations of once-punctured discs and associate sequences of positive integers to them thus generating periodic infinite friezes. We provide a geometrical interpretation of the two operations $n$-gluing and $n$-cutting on periodic infinite friezes in terms of triangulations of once-punctured discs. Moreover, we prove a remarkable arithmetic property of these periodic infinite friezes.

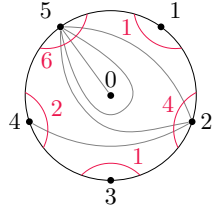
\begin{figure}[t]
\scalebox{0.9}{\begin{tikzpicture}[font=\normalsize] 
\node (a) at (0,0) [fill,circle,inner sep=1pt] {};
\draw (0,0) circle (1.25cm);
\foreach \x in {-144,-72,0,72,144} {
   \begin{scope}[rotate=\x]
    \node (\x) at (0,-1.25) [fill,circle,inner sep=1pt] {};
   \end{scope}
}
\draw (a) node [above] {$0$};
\draw (144) node [above right] {$1$};
\draw (72) node [right] {$2$};
\draw (0) node [below] {$3$};
\draw (-72) node [left] {$4$};
\draw (-144) node [above left] {$5$};
\draw[thin,opacity=0.5] (a) to (-144);
\draw[thin,opacity=0.5,out=-140,in=-80] (0.2,-0.2) to (-144);
\draw[thin,opacity=0.5,out=-30,in=40] (-144) to  (0.2,-0.2);
\draw[thin,opacity=0.5,out=110,in=0] (72) to  (-144);
\draw[thin,opacity=0.5,out=-30,in=-150] (-72) to (72);
\draw[thin,opacity=0.5,out=-170,in=-45] (72) to (-0.3,-0.3) ;
\draw[thin,opacity=0.5,out=135,in=-90] (-0.3,-0.3) to(-144);

\foreach \x in {-144,-72,0,72,144} {
   \begin{scope}[rotate=\x]

\draw[thin,myred,out=45,in=135] (-0.428,-1.175) to (0.428,-1.175);
   \end{scope}
}

\draw (144) node [myred,shift={(-0.5cm,0cm)}] {$1$};
\draw (72) node [myred,shift={(-0.35cm,0.25cm)}] {$4$};
\draw (0) node [myred,shift={(0.4cm,0.35cm)}] {$1$};
\draw (-72) node [myred,shift={(0.4cm,0.2cm)}] {$2$};
\draw (-144) node [myred,shift={(-0.2cm,-0.5cm)}] {$6$};
\end{tikzpicture}}
\caption{A triangulation $\Pi$ of $S_5^1$ with quiddity sequence $q_\Pi=(1,4,1,2,6)$.}\label{figexquidditysequencedisc}
\end{figure}

Let us briefly recall some basic notions of triangulated punctured discs, for more details on triangulations of bordered surfaces with marked points see \cite{{FST}}. For $n\geq 1$, the \emph{once-punctured disc} $S_n^1$ is a closed disc with $n$ marked points on the boundary, numbered by $1,2 ,\cdots n$ in clockwise order, and one marked point in the interior, namely the \emph{puncture} labeled by $0$. An \emph{arc} in $S_n^1$ is a non-boundary, non-self-intersecting curve, connecting two marked points of $S_n^1$. In the sequel $S_n^1$ is always meant to be a once-punctured disc together with a fixed labeling and we only consider isotopy classes of arcs. An arc whose endpoints coincide is called a \emph{loop}. We shall use the following notation for arcs of punctured discs: $0j$ indicates the \emph{bridging arc} connecting the puncture with a marked point $j$ on the boundary. For two marked points $i,j$ on the boundary $ij$ denotes the arc isotopic to the boundary segment going clockwise fromneighbor $i$ to $j$ by ignoring all marked points on the boundary other than $i$ and $j$. Such an arc is called \emph{peripheral}. We use the following convention for marked points: if $i=1$, $i-1$ equals $n$ , and if $i=n$, $i+1$ equals $1$.

Two arcs are said to be \emph{non-crossing} if they have no point of intersection in the interior of $S_n^1$. A maximal collection $\Pi$ of pairwise non-crossing arcs in $S_n^1$ is called a \emph{triangulation} of $S_n^1$. One can easily verify that every triangulation of $S_n^1$ consists of exactly $n$ arcs and cuts $S_n^1$ into $n$ disjoint regions, called \emph{triangles}. Since the number of arcs in $S_n^1$ is finite, there exist only finitely many triangulations of $S_n^1$. When considering symmetries of triangulations, we always assume that the marked points on the boundary of $S_n$ are evenly distributed. For combinatorial reasons it is useful to consider triangulations of punctured discs up to rotation through $\frac{2\pi}{n}$ about the puncture.  If two triangulations of $S_n^1$ are rotation-equivalent they are said to be of the same \emph{shape}. Clearly, there are at most $n$ different triangulations of $S_n^1$ of the same shape, depending on the symmetries the triangulation has. 

Note that every triangulation of $S_n^1$ contains at least one bridging arc. The special case where a triangulation consists entirely of bridging arcs is called \emph{star-triangulation} and denoted by $\Pi_{\ast}$.
  
\begin{defin}
Let  $\Pi$ be a triangulation of $S_n^1$. The \emph{quiddity sequence} $q_{\Pi}$ of $\Pi$ is the finite sequence $(a_1, a_2,\dots,a_n)$ of positive integers, where $a_i$ is the number of connected components of $S_n^1\setminus{\Pi}\cap U$, for $U$ a small neighborhood of $i$. 
\end{defin}

We will see that every $q_{\Pi}$ gives rise to a periodic infinite frieze (Theorem~\ref{thmtriangulationfrieze}). Hence it makes sense to call $q_{\Pi}$ a quiddity sequence. Figure~\ref{figexquidditysequencedisc} gives an example of a triangulation of $S_5^1$ together with its quiddity sequence. 

Clearly, for two triangulations of $S_n^1$ with the same shape the quiddity sequences coincide up to cyclic permutation. Note also that the reflection at a diameter through a marked point  on the boundary of a triangulation $\Pi$ of $S_n^1$ with quiddity sequence \mbox{$q_\Pi=(a_1, a_2,\dots,a_n)$} is a triangulation $\Pi'$ of $S_n^1$  with quiddity sequence $q_{\Pi'}=(a_n, a_{n-1},\dots,a_1)$.

\begin{defin}
Given a triangulation $\Pi$ of $S_n^1$ with quiddity sequence $q_{\Pi}=(a_1, a_2,\dots,a_n)$ a marked point $i\in\{1,2,\dots,n\}$ on the boundary is called \emph{special} with respect to $\Pi$ if $a_i=1$,  i.e.\ if $i$ is incident with exactly one triangle $\tau_i$ in $\Pi$. In this case, $\tau_i$ is called a \emph{special triangle}.
\end{defin}

The following result for triangulations of once-punctured discs and special marked points can easily be checked.

\begin{lem}\label{lemspecialmarkedpoint}
Every triangulation of $S_n^1$ other than the star-triangulation has at least one special marked point. Therefore, its quiddity sequence contains at least one entry $1$. 
\end{lem}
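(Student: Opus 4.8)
The plan is to translate the statement into a combinatorial count over the triangles of $\Pi$ and then isolate exactly which triangulations fail to produce a special marked point. First I would record the elementary reformulation that $a_i$ equals the number of arc-ends at $i$ plus one: the arcs and boundary segments emanating from $i$ cut the corner at $i$ into $a_i$ sectors, and the two boundary segments at $i$ contribute the two extreme rays. Hence $a_i=1$ holds precisely when no arc of $\Pi$ has an endpoint at $i$, equivalently when the two boundary segments meeting at $i$ are sides of one and the same triangle. So the task becomes: exhibit, in every non-star triangulation, a triangle that uses both boundary segments adjacent to some boundary point.

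Next I would count boundary segments. For each triangle $\tau$ let $b_\tau$ be the number of the $n$ boundary segments $i(i+1)$ that occur among the sides of $\tau$. Since each boundary segment lies on exactly one triangle, summing over the $n$ triangles gives $\sum_\tau b_\tau=n$. Now if some triangle has $b_\tau\ge 2$, then it has two distinct boundary segments among its sides; such a triangle is genuinely non-degenerate (a self-folded triangle has no boundary side at all), so its two boundary sides meet at a common vertex $i$, and they are then necessarily the two boundary segments adjacent to $i$. Thus the whole corner at $i$ is filled by $\tau$, no arc emanates from $i$, and $i$ is special. Because $\sum_\tau b_\tau=n$ over exactly $n$ triangles, the alternative is that $b_\tau=1$ for every $\tau$, and it therefore suffices to show that this forces $\Pi=\Pi_\ast$.

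Verifying that $b_\tau=1$ for all $\tau$ forces the star-triangulation is the crux. Since a self-folded triangle would have $b_\tau=0$, the hypothesis excludes self-folded triangles and hence any loop enclosing the puncture; thus the puncture is an endpoint of $m$ bridging arcs and of no other arc. As $m=1$ would again force a self-folded triangle, we have $m\ge 2$, and these arcs cut a neighbourhood of $0$ into $m$ sectors. Any triangle incident to the puncture has its two sides at $0$ equal to bridging arcs, so $b_\tau=1$ forces its remaining side to be a boundary segment $i(i+1)$; that is, each such triangle is of the form $0,i,i+1$. Reading these triangles cyclically around the puncture, consecutive ones share a bridging arc, so the bridging arcs reach consecutive boundary points and, on closing up the cycle, must reach all of $1,\dots,n$. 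This produces the $n$ triangles $0,i,i+1$, which is exactly $\Pi_\ast$.

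Assembling the pieces: if $\Pi\ne\Pi_\ast$ then not every $b_\tau$ equals $1$, so by $\sum_\tau b_\tau=n$ some triangle has $b_\tau\ge 2$, and its shared vertex is special; by definition of $q_\Pi$ this yields an entry $1$. The only genuinely delicate point is the bookkeeping around the puncture, namely correctly handling self-folded triangles and loops; once these are seen to be incompatible with $b_\tau\le 1$, what remains is a short pigeonhole argument together with the cyclic description of the star-triangulation.
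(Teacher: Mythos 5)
Your proof is correct, and it is worth noting that the paper itself offers no argument here: Lemma~3.4 is stated with the remark that it ``can easily be checked,'' so your double-counting proof genuinely supplies what the paper leaves to the reader. The count $\sum_\tau b_\tau = n$ over the $n$ triangles, together with the observation that $b_\tau\ge 2$ produces a filled corner and that $b_\tau\equiv 1$ forces every triangle at the puncture to be of the form $0,i,i{+}1$ and hence forces $\Pi=\Pi_\ast$, is a clean global argument; the alternative the paper's machinery suggests would be a local ``ear-finding'' induction in the spirit of Corollary~3.6 (cutting special triangles), which your pigeonhole count replaces by a single computation. Two small points deserve slightly more care, though neither is a gap. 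First, when $b_\tau\ge 2$ you should say that the two boundary sides, being two of the three sides of $\tau$, are \emph{cyclically consecutive} in the boundary walk of $\tau$ and hence form a corner of $\tau$ at some shared vertex $i$; merely ``meeting at a common vertex'' is ambiguous in the degenerate case where they share both endpoints (e.g.\ for $n=2$, the outer triangle of the self-folded configuration has sides $12$, $21$ and the loop $11$: the two segments meet at $1$ and at $2$, but only at $2$ do they bound a corner of $\tau$, and indeed $a_2=1$ while $a_1=4$). Second, your sentence ``the puncture is an endpoint of $m$ bridging arcs'' tacitly uses $m\ge 1$; this is needed before the $m=1$ versus $m\ge 2$ dichotomy, and is justified either by the paper's explicit remark that every triangulation of $S_n^1$ contains a bridging arc, or by noting that the region containing the puncture could not otherwise be a triangle. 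Your exclusion of loops is also fine, and in fact simpler than you state: in $S_n^1$ any loop based at a boundary vertex encloses exactly the puncture (the inside component meets the boundary circle only at the basepoint, so it can contain no other marked point and must contain the puncture to be essential), whence it always produces a self-folded triangle, contradicting $b_\tau\ge 1$ for all $\tau$.
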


Note that special triangles can be removed from  triangulations of $S_n^1$ to obtain triangulations of $S_{n'}^1$ for $n'<n$. Conversely, we can always add triangles to triangulations. It can easily be verified that this gives two elementary operations on triangulations of once-punctured discs, called \emph{cutting}, and \emph{gluing}, respectively. For later usage we record the effect of these procedures to the quiddity sequences.

\begin{cor}\label{corgluecuttriangle}
Let $\Pi$ be a triangulation of $S_{n}^1$ with quiddity sequence $q_{\Pi}=(a_1,a_2,\dots,a_n)$.

\begin{inparaenum}[$a)$] 
\item\label{corcuttriangle}If $x\in\{1,2,\dots,n\}$ is a special marked point of $\Pi$ and $\Pi_{\setminus x}$ denotes the union of all triangles in $\Pi$ other than the special triangle at $x$, then $\Pi_{\setminus x}$ is a triangulation of $S_{n-1}^1$ with quiddity sequence
$$q_{\Pi_{\setminus x}}=\begin{cases}
(2)&\text{if } n=2,\\
(a_1,\dots,a_{x-2},a_{x-1}\!-\!1,a_{x+1}\!-\!1,a_{x+2},\dots,a_n)&\text{otherwise}.
\end{cases}$$

\item\label{corgluetriangle} If $x$ is a marked point added to the boundary of $S_n^1$ between $i$ and $i+1$ and $\Pi_{\cup x}$ denotes the union of all triangles in $\Pi$ together with the triangle having vertices $i,x$ and $i+1$, then $\Pi_{\cup x}$ is a triangulation of $S_{n+1}^1$ with quiddity sequence
$$q_{\Pi_{\cup x}}=\begin{cases}
(4,1) &\text{if } $n=1$,\\
(a_1,\dots,a_{i-1},a_i\!+\!1,1,a_{i+1}\!+\!1,a_{i+2},\dots,a_n)&\text{otherwise}.
\end{cases}$$
\end{inparaenum}
\end{cor}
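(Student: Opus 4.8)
The plan is to prove both parts by a direct local analysis of the triangles meeting a boundary vertex, exploiting the fact that being special means \emph{no} arc of the triangulation is incident there. Since cutting and gluing are mutually inverse constructions, I intend to establish part $a)$ carefully and then obtain part $b)$ by reversing it (the bookkeeping being identical). The starting observation is that at a boundary vertex $i$ the quiddity equals $a_i=m_i+1$, where $m_i$ is the number of arc-ends of $\Pi$ at $i$: the two boundary segments at $i$ bound the half-disc neighbourhood $U$, and the $m_i$ arcs through $i$ subdivide it into $m_i+1$ sectors, which are exactly the connected components counted by $a_i$.

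For part $a)$, the condition $a_x=1$ forces $m_x=0$, so no arc of $\Pi$ ends at $x$. Consequently the two edges of the unique triangle $\tau_x$ incident to $x$ are precisely the boundary segments $(x\!-\!1,x)$ and $(x,x\!+\!1)$, whence $\tau_x$ has vertices $x\!-\!1,x,x\!+\!1$ and its remaining edge is a genuine non-boundary arc joining $x\!-\!1$ and $x\!+\!1$. In particular $\tau_x$ is an ordinary \emph{ear} and is never self-folded, since a self-folded triangle would force a loop incident to $x$, contradicting $m_x=0$. I then remove $\tau_x$ together with $x$: the arc $(x\!-\!1)(x\!+\!1)$ becomes a boundary segment of the once-punctured disc on the remaining $n-1$ marked points, the inherited arcs stay pairwise non-crossing, and exactly one arc has been deleted, leaving $n-1$ arcs; hence $\Pi_{\setminus x}$ is a triangulation of $S_{n-1}^1$.

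To read off the new quiddity sequence I only need the affected vertices: away from $x\!-\!1,x,x\!+\!1$ nothing changes, while at $x\!-\!1$ (and symmetrically at $x\!+\!1$) precisely the triangle $\tau_x$ is lost, so $m_{x-1}$ drops by one and the count becomes $a_{x-1}\!-\!1$, giving $a_{x+1}\!-\!1$ as well. This yields the stated sequence for $n\geq 3$. Part $b)$ is then the exact reverse: inserting $x$ between $i$ and $i\!+\!1$ and attaching the ear $\{i,x,i\!+\!1\}$ converts the old boundary segment $(i,i\!+\!1)$ into the arc $i(i\!+\!1)$, makes $x$ special, and raises $m_i$ and $m_{i+1}$ each by one, producing $(\dots,a_i\!+\!1,1,a_{i+1}\!+\!1,\dots)$ on $S_{n+1}^1$.

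The hard part will be the degenerate small cases where the two affected indices collide. For cutting this is $n=2$, where $x\!-\!1=x\!+\!1$ so the single surviving vertex loses a triangle on each side; here I must check explicitly that the forced configuration is the self-folded picture (the surviving vertex carries quiddity $4$, since both decrements act on it) and that removal leaves the unique triangulation of $S_1^1$ with quiddity $(2)$. Symmetrically, for gluing the case $n=1$ has $i=i\!+\!1$, producing the doubled increment and the value $(4,1)$. Pinning down these punctured-disc degeneracies directly, rather than the generic count, is where care is genuinely required.
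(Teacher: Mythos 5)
Your proof is correct and is essentially the verification the paper itself leaves to the reader (the corollary is stated after the remark that special triangles can be removed and ears glued back, with no written proof): the sector count $a_i=m_i+1$ via arc-ends, the identification of the third side of $\tau_x$ as the peripheral arc $(x\!-\!1)(x\!+\!1)$, and the separate handling of the degenerate cases $n=2$ and $n=1$ where the two affected indices collide are exactly the needed steps, and your inverse-operations reduction of part $b)$ to part $a)$ is sound. One cosmetic point: the forced $n=2$ configuration is not literally a self-folded triangle but an ear whose sides are the two boundary segments at $x$ and the loop at the surviving vertex (that vertex appearing twice among the triangle's vertices), which is precisely why both decrements act on it, as you correctly compute.
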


\begin{rem}
It is noteworthy that every triangulation with $r$ bridging arcs of $S_n^1$ can be obtained from the star-triangulation on $r$ arcs by gluing triangles successively.
\end{rem}

Observe that using the action of cutting for triangulations of once-punctured discs it is not hard to show inductively that the quiddity sequence provides all the information about the corresponding triangulation.

We now come to one of the main results of this article.

\begin{thm}\label{thmtriangulationfrieze}
Let $\Pi$ be a triangulation of $S_n^1$. Then the quiddity sequence $q_{\Pi}=(a_1, a_2,\dots,a_n)$ of $\Pi$ is a quiddity sequence of an infinite frieze $\mathcal{F}_{\Pi}$ of period $n$.
\end{thm}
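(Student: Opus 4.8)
The plan is to prove this by induction on $n$, exploiting the cutting and gluing operations already developed in Section~\ref{seccutglue} together with their triangulation counterparts from Corollary~\ref{corgluecuttriangle}. The key observation is that the two kinds of operations — $n$-gluing/$n$-cutting on friezes (Propositions~\ref{propnglueperiodicfrieze}, \ref{propncutperiodicfrieze}) and gluing/cutting on triangulations (Corollary~\ref{corgluecuttriangle}) — produce \emph{identical} transformations of the quiddity sequence. This is the crucial compatibility that makes the induction work: adding or removing a special triangle changes the triangulation's quiddity sequence in exactly the same way that $n$-gluing or $n$-cutting changes a periodic frieze's quiddity sequence.

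\emph{Base case.} First I would treat the star-triangulation $\Pi_\ast$ of $S_n^1$, whose quiddity sequence is the constant sequence $(a,a,\dots,a)$ for an appropriate value determined by counting components at each marked point (each boundary point meets two bridging arcs and the intervening triangles, giving the same count $a$ at every vertex). By Proposition~\ref{propcompletefrieze}, this constant sequence is the quiddity row of a complete ($1$-periodic, hence trivially $n$-periodic) infinite frieze. One should check that the star-triangulation on $n$ arcs yields precisely the constant value for which Proposition~\ref{propcompletefrieze} applies (namely $a\ge 2$), which is immediate since each boundary vertex is incident to at least two triangles.

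\emph{Inductive step.} Suppose the claim holds for all triangulations of $S_{n-1}^1$. Let $\Pi$ be an arbitrary triangulation of $S_n^1$ that is not a star-triangulation. By Lemma~\ref{lemspecialmarkedpoint}, $\Pi$ has a special marked point $x$, so $a_x=1$. Removing the special triangle at $x$ gives, by Corollary~\ref{corgluecuttriangle}\,\ref{corcuttriangle}), a triangulation $\Pi_{\setminus x}$ of $S_{n-1}^1$ with quiddity sequence
$$q_{\Pi_{\setminus x}}=(a_1,\dots,a_{x-2},a_{x-1}\!-\!1,a_{x+1}\!-\!1,a_{x+2},\dots,a_n)$$
(the case $n=2$ being handled separately). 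By the induction hypothesis, $q_{\Pi_{\setminus x}}$ is the quiddity sequence of an $(n-1)$-periodic infinite frieze $\mathcal{F}_{\Pi_{\setminus x}}$. Now I apply the reverse frieze operation: comparing the formula above with the $n$-gluing formula of Proposition~\ref{propnglueperiodicfrieze}, one sees that $n$-gluing $\mathcal{F}_{\Pi_{\setminus x}}$ above the appropriate pair recovers exactly $q_\Pi$. Concretely, $(n\!-\!1)$-gluing above the pair $(a_{x-1}\!-\!1,\,a_{x+1}\!-\!1)$ sends this sequence to $(a_1,\dots,a_{x-1},1,\dots)$ — but with the two modified entries each increased by $1$ back to $a_{x-1}$ and $a_{x+1}$, and a new entry $1$ inserted between them at position $x$. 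This is precisely $q_\Pi$. Since Proposition~\ref{propnglueperiodicfrieze} guarantees that the result of $(n\!-\!1)$-gluing is a genuine infinite frieze of period $n$, we conclude that $q_\Pi$ is the quiddity sequence of an infinite frieze $\mathcal{F}_\Pi$ of period $n$.

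\emph{The main obstacle} I anticipate is verifying that the two pairs of operations genuinely match at the level of quiddity sequences, including the correct bookkeeping of indices (which are taken modulo $n$ in the periodic setting) and the careful handling of the degenerate boundary cases $n=1$ and $n=2$, where the general formulas collapse. One must also confirm that every triangulation of $S_n^1$ is reachable from a star-triangulation by successive gluings — equivalently, that iterated cutting always terminates at a star-triangulation — which follows from Lemma~\ref{lemspecialmarkedpoint} since any non-star triangulation admits a special point to cut at, strictly decreasing $n$ at each step. The remaining verifications (positivity of entries, that the period is exactly $n$ rather than a proper divisor in general) are routine given the explicit gluing construction, since Proposition~\ref{propnglueperiodicfrieze} already furnishes a well-defined periodic frieze.
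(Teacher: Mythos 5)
Your proposal is correct and follows essentially the same route as the paper: induction on $n$ with star-triangulations as base case, cutting at a special marked point (Lemma~\ref{lemspecialmarkedpoint}, Corollary~\ref{corgluecuttriangle}), and recovering $q_\Pi$ by gluing above $(a_{x-1}\!-\!1,a_{x+1}\!-\!1)$ via Proposition~\ref{propnglueperiodicfrieze}. One small remark: your worry about the period being \emph{exactly} $n$ is moot, since Definition~\ref{defperiodic} does not require minimality (and indeed the minimal period can be a proper divisor of $n$, as the paper notes), so $n$-periodicity follows automatically from the gluing construction.
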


\begin{proof}
We prove the result by induction on $n$. For $n=1$, there is only the star-triangulation $\Pi_\ast$ with quiddity sequence $q_{\Pi_\ast}=(2)$ and this is a quiddity sequence $q_{\mathcal{F}_\ast}$ for the basic infinite frieze $\mathcal{F}_\ast$, cf.\ Figure~\ref{figbasicfrieze}, thus $\mathcal{F}_{\Pi_\ast}=\mathcal{F}_\ast$.

Now, for $n\geq 1$, we assume that any triangulation of $S_n^1$ yields an $n$-periodic infinite frieze. Let $\Pi$ be a triangulation of $S_{n+1}^1$ and  \mbox{$q_{\Pi}=(a_1,a_2,\dots,a_{n+1})$} its quiddity sequence. If $\Pi=\Pi_\ast$, the claim follows as in the base case. Otherwise, if $\Pi\ne\Pi_\ast$, there is a special marked point $x$ of $\Pi$ (Lemma~\ref{lemspecialmarkedpoint}). By Corollary~\ref{corgluecuttriangle} $\ref{corcuttriangle})$, $\Pi_{\setminus x}$ is a triangulation of $S_n^1$ with quiddity sequence $q_{\Pi_{\setminus x}}=(a_1,\dots,a_{x-2},a_{x-1}\!-\!1,a_{x+1}\!-\!1,a_{x+2},\dots,a_{n+1})$ (or $(2)$ if $n=1$). By induction, $q_{\Pi_{\setminus x}}$ is the quiddity sequence of an $n$-periodic infinite frieze $\mathcal{F}_{\Pi_{\setminus x}}$. Now we $n$-glue above $(a_{x-1}\!-\!1,a_{x+1}\!-\!1)$ in $\mathcal{F}_{\Pi_{\setminus x}}$ and by \mbox{Proposition~\ref{propnglueperiodicfrieze}}, this gives an infinite frieze $\widehat{\mathcal{F}}_{\Pi_{\setminus x}}^n$ of period $n\!+\!1$ such that $\hat{ q}_{\mathcal{F}_{\Pi_{\setminus x}}}=q_\Pi$. This completes the proof.
\end{proof}

\begin{figure}[t]
\resizebox{.9\linewidth}{!}{\begin{tikzpicture}[font=\normalsize] 
  \matrix(m) [matrix of math nodes,row sep={1.5em,between origins},column sep={1.5em,between origins},nodes in empty cells]{
&&&&&&&&&&&&&&&&&&&&&&&&&&&&&&&&&&&&&\\
&0&&0&&0&&0&&0&&0&&0&&0&&0&&0&&0&&&&&&&&&&&&&&&&\\
&&1&&1&&1&&1&&1&&1&&1&&1&&1&&1&&1&&&&&&&&&&&&&&&\\
&&&1&&4&&1&&2&&6&&1&&4&&1&&2&&6&&1&&&&&&&&&&&&&&\\
&&&&3&&3&&1&&11&&5&&3&&3&&1&&11&&5&&3&&&&&&&&&&&&&\\
&&&&&2&&2&&5&&9&&14&&2&&2&&5&&9&&14&&2&&&&&&&&&&&&\\
&&&\node{\cdots};&&&1&&9&&4&&25&&9&&1&&9&&4&&25&&9&&1&&&\node{\cdots};&&&&&&&&\\
&&&&&&&4&&7&&11&&16&&4&&4&&7&&11&&16&&4&&4&&&&&&&&&&\\
&&&&&&&&3&&19&&7&&7&&15&&3&&19&&7&&7&&15&&3&&&&&&&&&\\
&&&&&&&&&8&&12&&3&&26&&11&&8&&12&&3&&26&&11&&8&&&&&&&&\\
&&&&&&&&&&5&&5&&11&&19&&29&&5&&5&&11&&19&&29&&5&&&&&&&\\
&&&&&&&&&&&2&&18&&8&&50&&18&&2&&18&&8&&50&&18&&2&&&&&&\\
&&&&&&&&&&&&7&&13&&21&&31&&7&&7&&13&&21&&31&&7&&7&&&&&\\
&&&&&&&&&&\node{\cdots};&&&5&&34&&13&&12&&24&&5&&34&&13&&12&&24&&5&&&\node{\cdots};&\\
&&&&&&&&&&&&&&13&&21&&5&&41&&17&&13&&21&&5&&41&&17&&13&&&\\
&&&&&&&&&&&&&&&8&&8&&17&&29&&44&&8&&8&&17&&29&&44&&8&&\\
&&&&&&&&&&&&&&&&3&&27&&12&&75&&27&&3&&27&&12&&75&&27&&3&\\
&&&&&&&&&&&&&&&&&&&&&&&&&&&&&&&&&&&&&\\
&&&&&&&&&&&&&&&&&&&&&&\node[rotate=-6.5,shift={(-0.034cm,-0.08cm)}]  {\ddots};&&&&&&&&&&\node[rotate=-6.5,shift={(-0.034cm,-0.08cm)}]  {\ddots};&&&&&\\
};

\draw (m-18-22) node[myblue,shift={(0cm,0.1cm)}]{$\mathcal{D}$};

\draw[opacity=0,rounded corners, fill=myblue,fill opacity=0.15] ($(m-1-1.south west)+(-0.15cm,0cm)$) -- ($(m-17-17.south west)+(-0.15cm,0cm)$)--($(m-17-27.south west)+(-0.15cm,0cm)$) -- ($(m-1-11.south west)+(-0.15cm,0cm)$) -- cycle;

\draw[semithick,dashed,myblue,opacity=0.25] ($(m-1-1.south west)+(-0.15cm,0cm)$) -- ($(m-17-17.south west)+(-0.15cm,0cm)$);
\draw[semithick,dashed,myblue,opacity=0.25] ($(m-1-11.south west)+(-0.15cm,0cm)$) -- ($(m-17-27.south west)+(-0.15cm,0cm)$);
\draw[semithick,dashed,myblue,opacity=0.25] ($(m-1-21.south west)+(-0.15cm,0cm)$) -- ($(m-17-37.south west)+(-0.1cm,0cm)$);    

\fill[myred,opacity=0.3] (m-4-6) circle (0.25cm);
\fill[myred,opacity=0.3] (m-9-11) circle (0.25cm);
\fill[myred,opacity=0.3] (m-14-16) circle (0.25cm);

\fill[mygreen,opacity=0.3] (m-2-10) circle (0.25cm);
\fill[mygreen,opacity=0.3] (m-7-15) circle (0.25cm);
\fill[mygreen,opacity=0.3]  (m-12-20) circle (0.25cm);
\fill[mygreen,opacity=0.3] (m-17-25) circle (0.25cm);

\end{tikzpicture}}
\caption{The $5$-arithmetic frieze associated to the triangulation of $S_5^1$ shown in Figure~\ref{figexquidditysequencedisc}.}\label{figexfrieze5p2}
\end{figure}
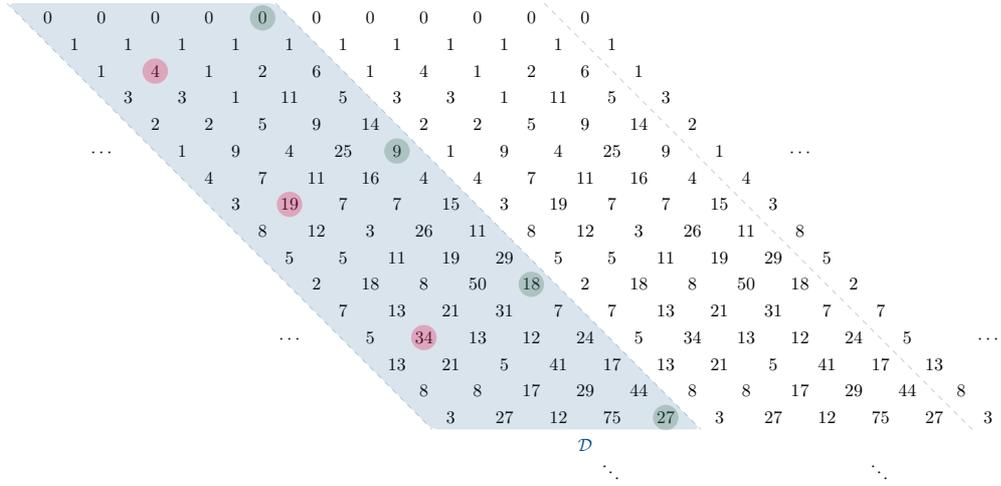

For the triangulation of $S_5^1$ given in Figure~\ref{figexquidditysequencedisc} the associated periodic infinite frieze is illustrated in Figure~\ref{figexfrieze5p2}.

Note that an immediate consequence of Theorem~\ref{thmtriangulationfrieze} and Corollary~\ref{corgluecuttriangle} is that cutting and gluing of triangles for triangulations of once-punctured discs provide a geometric interpretation via triangulations for the operations $n$-cutting and $n$-gluing defined on periodic infinite friezes in Section \ref{seccutglue}.

\begin{cor}\label{corspecialmpfriezeequivalce}
Given a triangulation $\Pi\ne \Pi_\ast$ of $S_{n+1}^1$ with quiddity sequence $q_{\Pi}=(a_1,a_2,\dots,a_{n+1})$ and special marked point $x$, let $\mathcal{F}_{\Pi}$ be the $(n\!+\!1)$-periodic infinite frieze associated to $\Pi$, and let $\mathcal{F}_{\Pi_{\setminus x}}$ be the $n$-periodic infinite frieze associated to $\Pi_{\setminus x}$. Then $\mathcal{F}_{\Pi_{\setminus x}}$ equals $\widecheck{\mathcal{F}}_{\Pi}^{n+1}$ after $(n\!+\!1)$-cutting above $a_x=1$ in $\mathcal{F}_{\Pi}$, and $\mathcal{F}_{\Pi}$ equals $\widehat{\mathcal{F}}_{\Pi_{\setminus x}}^n$ after $n$-gluing above $(a_{x-1}-1,a_{x+1}-1)$ in $\mathcal{F}_{\Pi_{\setminus x}}$.
\end{cor}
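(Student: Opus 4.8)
The plan is to show that Corollary~\ref{corspecialmpfriezeequivalce} is essentially a restatement of the inductive step already carried out in the proof of Theorem~\ref{thmtriangulationfrieze}, combined with the fact that $n$-cutting and $n$-gluing are mutually inverse operations on periodic infinite friezes. First I would recall the setup: since $\Pi\ne\Pi_\ast$ is a triangulation of $S_{n+1}^1$, Lemma~\ref{lemspecialmarkedpoint} guarantees the special marked point $x$ with $a_x=1$, and by Corollary~\ref{corgluecuttriangle}~\ref{corcuttriangle}) the quiddity sequence of $\Pi_{\setminus x}$ is
$$q_{\Pi_{\setminus x}}=(a_1,\dots,a_{x-2},a_{x-1}\!-\!1,a_{x+1}\!-\!1,a_{x+2},\dots,a_{n+1}),$$
an $n$-tuple (or $(2)$ when $n=1$). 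By Theorem~\ref{thmtriangulationfrieze} applied to $\Pi_{\setminus x}$, this is the quiddity sequence of the $n$-periodic frieze $\mathcal{F}_{\Pi_{\setminus x}}$, and applied to $\Pi$, the sequence $q_\Pi$ is the quiddity sequence of the $(n\!+\!1)$-periodic frieze $\mathcal{F}_\Pi$.

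Next I would verify the gluing half of the statement directly from the construction. Performing $n$-gluing above the pair $(a_{x-1}\!-\!1,a_{x+1}\!-\!1)$ in $\mathcal{F}_{\Pi_{\setminus x}}$ produces, by Proposition~\ref{propnglueperiodicfrieze}, an $(n\!+\!1)$-periodic frieze $\widehat{\mathcal{F}}_{\Pi_{\setminus x}}^{\,n}$ whose quiddity sequence is obtained from $q_{\Pi_{\setminus x}}$ by the rule of that proposition; a direct substitution shows this equals exactly $q_\Pi=(a_1,\dots,a_{x-1},a_x\!+\!1,\dots)$ with $a_x=1$ reappearing in the middle. Since a periodic infinite frieze is determined by its quiddity sequence (by the remark following Definition~\ref{definffrieze}, extended to the periodic case), having the same quiddity sequence forces $\widehat{\mathcal{F}}_{\Pi_{\setminus x}}^{\,n}=\mathcal{F}_\Pi$. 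This is precisely the second assertion.

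The cutting half then follows by the inverse relationship between the two operations. Because $n$-cutting above $a_x=1$ and $n$-gluing above $(a_{x-1}\!-\!1,a_{x+1}\!-\!1)$ are inverse to one another at the level of quiddity sequences—this is the periodic analogue of the remark after Theorem~\ref{thmcutfrieze}, which notes that gluing above a pair and then cutting above the resulting entry $1$ recovers the original frieze—applying $(n\!+\!1)$-cutting above $a_x=1$ in $\mathcal{F}_\Pi$ undoes the gluing and returns the quiddity sequence $q_{\Pi_{\setminus x}}$. Again invoking that a periodic frieze is determined by its quiddity sequence, the resulting frieze $\widecheck{\mathcal{F}}_\Pi^{\,n+1}$ coincides with $\mathcal{F}_{\Pi_{\setminus x}}$. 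One should also dispose of the degenerate cases $n=1$ (where $q_{\Pi_{\setminus x}}=(2)$ and the gluing rule uses the $n=1$ branch giving $(a_1\!+\!2,1)$) by a direct check against the basic frieze.

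The main obstacle I anticipate is not conceptual but bookkeeping: one must confirm that the index shifts in Proposition~\ref{propnglueperiodicfrieze}/\ref{propncutperiodicfrieze} line up verbatim with those in Corollary~\ref{corgluecuttriangle}, so that the pair being glued in the frieze really is $(a_{x-1}\!-\!1,a_{x+1}\!-\!1)$ and the entry cut above really is the $a_x=1$ sitting at the correct position modulo $n\!+\!1$. The cleanest way to sidestep this is to observe that the equality of friezes has in fact already been \emph{produced} inside the proof of Theorem~\ref{thmtriangulationfrieze}, where $\mathcal{F}_\Pi$ was \emph{defined} as $\widehat{\mathcal{F}}_{\Pi_{\setminus x}}^{\,n}$; thus the corollary records this identification together with its reverse, and the only genuine content to verify is that the cutting operation is the two-sided inverse of gluing on the relevant quiddity data.
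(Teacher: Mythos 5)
Your proposal is correct and matches the paper's treatment: the paper offers no separate proof, stating the corollary as an immediate consequence of Theorem~\ref{thmtriangulationfrieze} and Corollary~\ref{corgluecuttriangle}, which is exactly your closing observation that $\mathcal{F}_\Pi$ was constructed as $\widehat{\mathcal{F}}_{\Pi_{\setminus x}}^{\,n}$ inside the theorem's proof and that $(n\!+\!1)$-cutting above $a_x=1$ inverts that gluing at the level of quiddity sequences. Your extra care about the index bookkeeping in Propositions~\ref{propnglueperiodicfrieze} and~\ref{propncutperiodicfrieze}, the $n=1$ degenerate case, and the fact that a periodic frieze is determined by its quiddity sequence only makes explicit what the paper leaves implicit.
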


As in \cite{BCI} for finite friezes, the next result provides information about the occurrence of the entry $1$ in periodic infinite friezes associated to triangulations of once-punctured discs. In fact, the opposite direction of the following lemma holds also true as we shall see later at the end of this section (Proposition~\ref{propunitentry}).

\begin{lem}\label{lemunitentry}
Let $\mathcal{F}_{\Pi}=(m_{ij})_{j-i\ge -2}$ be an $n$-periodic infinite frieze associated to a triangulation $\Pi$ of $S_n^1$ and let $ij$ be a peripheral arc in $\Pi$. Then for all $l\in\mathbb{Z}$, $m_{(i+1)+ln,(j-1)+ln}=1$ if $j\ge i+2$ and  $m_{(i+1)+ln,(j-1+n)+ln}=1$ otherwise.
\end{lem}

\begin{proof}
It is enough to show the claim for $l=0$ since $\mathcal{F}_{\Pi}$ is $n$-periodic. We use induction on $n$. If $n=1$, there is only the star-triangulation, so let $n=2$.  All triangulations of $S_2^1$ containing a peripheral arc have the same shape thus provide the same $2$-periodic infinite frieze $\mathcal{F}$. W.l.o.g.\ we choose the quiddity sequence such that $a_1=4$ and $a_2=1$. Among one bridging arc we have one peripheral arc, namely $11$ (with $j<i+1$). We need to verify $m_{22}=1.$ By construction, we have $m_{22}=a_2=1$ as desired.

We now assume that the claim holds for any triangulation of $S_n^1$. Let $\Pi\ne\Pi_\ast$ be a triangulation of $S_{n+1}^1$ with quiddity sequence $q_\Pi=(a_1,a_2,\dots,a_{n+1})$ and associated $(n\!+\!1)$-periodic infinite frieze $\mathcal{F}_\Pi=(m_{ij})_{j-i\ge -2}$. By Lemma~\ref{lemspecialmarkedpoint}, $\Pi$ has a special marked point $x\in\{1,2,\dots n+1\}$ and $\Pi_{\setminus x}$ is a triangulation of $S_n^1$ (Corollary~\ref{corgluecuttriangle} $\ref{corcuttriangle}$))  with quiddity sequence $q_{\Pi_{\setminus x}}=(\check a_1,\check a_2,\dots,\check a_n)$ and associated $n$-periodic infinite frieze $\mathcal{F}_{\Pi_{\setminus x}}=(\check m_{ij})_{j-i\ge -2}$, where $\mathcal{F}_{\Pi_{\setminus x}}$ is obtained from $\mathcal{F}_\Pi$ by $(n\!+\!1)$-cutting above $a_x=1$ (Corollary~\ref{corspecialmpfriezeequivalce}). In reverse, we get $\mathcal{F}_\Pi$ from $\mathcal{F}_{\Pi_{\setminus x}}$ by $n$-gluing above $(\check a_{x-1},\check a_{x})$, where $\check a_{x-1}=a_{x-1}-1$ and $\check a_{x}=a_{x+1}-1$.

With the notation of Corollary~\ref{cornglueperiodicfrieze}, if there is a peripheral $ij$ ($i,j\in\{1,2,\dots,n+1\}\setminus \{x\}$) arc in $\Pi$ other than $(x-1)(x+1)$, then $i_xj_x$ is also in $\Pi_{\setminus x}$. So we assume $ij\ne(x-1)(x+1)$ is a peripheral arc in $\Pi$ and we can use the first case of Corollary~\ref{cornglueperiodicfrieze} (for $k=x-1$). Either $j\ge i+2$, in which case $m_{i+1,j-1}=\check m_{(i+1)_{x+1},(j-1)_{x-1}}=\check m_{i_x+1,j_x-1}$ and $j_x\ge i_x+2$, or $j< i+2$, in which case $m_{i+1,j-1+(n+1)}=\check m_{(i+1)_{x+1},(j-1)_{x-1}+n}=\check m_{i_x+1,j_x-1+n}$ and $j_x< i_x+2$. Hence the result follows by induction. Finally, if $i=x-1$ and $j=x+1$, Corollary~\ref{cornglueperiodicfrieze} implies that $m_{xx}=\check m_{x,x-1}$, and by definition, the latter is $1$. This completes the proof.
\end{proof}

\begin{rem}
We already observed that triangulations of $S_n^1$ with the same shape provide the same quiddity sequence, up to cyclic permutation, thus give rise to the same $n$-periodic infinite frieze. In general, the periodic infinite frieze associated to a triangulation of $S_n^1$ has period $n$. But, it might also have shorter periods: if a triangulation of $S_n^1$ has rotational symmetries, the shortest period of the associated infinite frieze is a factor of $n$, as pictured on the right in the figure below. Indeed, we can construct many triangulations of different once-punctured discs giving rise to the same periodic infinite frieze.
\begin{center}
\scalebox{0.9}{\begin{tikzpicture}[font=\normalsize] 

%first triangulation
\node (b) at (-4,0) [fill,circle,inner sep=1pt] {};
\draw (-4,0) circle (1cm);

\foreach \x in {-144,36} {
   \begin{scope}[shift={(-4cm,0cm)},rotate=\x]
    \node (\x) at (0,-1) [fill,circle,inner sep=1pt] {};
   \end{scope}
}

\draw[thin,opacity=0.5] (b) to (-144);
\draw[thin,opacity=0.5,out=210,in=-90] (0.29-4,-0.4) to (-144);
\draw[thin,opacity=0.5,out=-10,in=30] (-144) to (0.29-4,-0.4);

\foreach \x in {-144,36} {
   \begin{scope}[shift={(-4,0)},rotate=\x]

\draw[thin,myred,out=45,in=135] (-0.342,-0.94) to (0.342,-0.94);
   \end{scope}
}
     
\draw (-144) node [myred,above left] {$4$};
\draw (36) node [myred,below right] {$1$};

%second triangulation
\node (a) at (0,0) [fill,circle,inner sep=1pt] {};
\draw (0,0) circle (1cm);
\foreach \x in {0,90,180,270} {
   \begin{scope}[rotate=\x]
    \node (\x) at (0,-1) [fill,circle,inner sep=1pt] {};
   \end{scope}
}

\draw[thin,opacity=0.5] (a) to (90);
\draw[thin,opacity=0.5] (a) to (270);
\draw[thin,opacity=0.5,out=45,in=-225] (270) to (90);
\draw[thin,opacity=0.5,out=-45,in=-135] (270) to (90);

\foreach \x in {0,90,180,270} {
   \begin{scope}[rotate=\x]

\draw[thin,myred,out=45,in=135] (-0.342,-0.94) to (0.342,-0.94);
   \end{scope}
}

\draw (0) node [myred,below] {$1$};
\draw (90) node [myred,right] {$4$};
\draw (180) node [myred,above] {$1$};
\draw (270) node [myred,left] {$4$};
\end{tikzpicture}}
\end{center}
\noindent Hence the associated periodic infinite friezes are not uniquely determined. Moreover, let us point out that there are periodic infinite friezes which can not be given by a triangulation of a once-punctured disc. Examples for this fact are the complete infinite frieze with $a>2$ or the periodic infinite frieze in Figure~\ref{figexfrieze5p1} they come from triangulated annuli, see \cite{{BPT}}.
\end{rem}

We now will see that the entries in periodic infinite friezes associated to triangulations of once-punctured discs satisfy a beautiful arithmetic property. For instance, in Figure~\ref{figexfrieze5p2} the numbers marked respectively by red and green circles form a sequence with entries in a {\sc se}-diagonal given always by jumping $5$ entries down. We will show that such sequences have common differences, and thus are increasing arithmetic progression. In Figure~\ref{figexfrieze5p2}, the indicated sequences have common differences $15$ and $9$, respectively.

\begin{defin}
For an infinite frieze $\mathcal{F}=(m_{ij})_{j-i\ge -2}$ and a positive integer $r\ge 1$, let \linebreak$d_{ik}:=m_{i,(i+k-3)+r}-m_{i,i+k-3}$ for all $i\in\mathbb{Z}$ and $k\in\{1,2,\dots,r\}$. We say that $\mathcal{F}$ is \emph{$r$-arithmetic} if $m_{i,(i+k-3)+(l+1)r}-m_{i,(i+k-3)+lr}=d_{ik}$ is satisfied for all $l \geq 0$ and every $d_{ik}$, where the $d_{ik}$ are the \emph{common differences} for $\mathcal{F}$.
\end{defin}

\begin{prop}\label{proptriangulationfriezearithmetic}
 Every $n$-periodic infinite frieze $\mathcal{F}_\Pi$ associated to a triangulation $\Pi$ of $S_n^1$ is $n$-arithmetic.
\end{prop}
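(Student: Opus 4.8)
The plan is to reinterpret each \textsc{se}-diagonal as a solution of a linear recurrence with periodic coefficients and to translate the $n$-arithmetic property into a statement about the monodromy matrix of that recurrence. By Lemma~\ref{lemrelationqsdiag}~\ref{relqsda}), along a fixed \textsc{se}-diagonal $f(a_i)$ the entries obey
$$m_{ij}=a_j\,m_{i,j-1}-m_{i,j-2},\qquad j\ge i-1,$$
with initial data $m_{i,i-2}=0$, $m_{i,i-1}=1$. Writing $M(a)=\begin{pmatrix}a&-1\\ 1&0\end{pmatrix}\in\mathrm{SL}_2(\mathbb{Z})$, this is
$$\begin{pmatrix}m_{ij}\\ m_{i,j-1}\end{pmatrix}=M(a_j)\begin{pmatrix}m_{i,j-1}\\ m_{i,j-2}\end{pmatrix}.$$
Since $\mathcal{F}_\Pi$ is $n$-periodic we have $a_{j+n}=a_j$ (Remark~\ref{remrelationqsdiagperiodic}), so advancing the second index by one full period is given by a single \emph{monodromy matrix}
$$T_j:=M(a_{j+n})M(a_{j+n-1})\cdots M(a_{j+1}),$$
which depends only on $j$ modulo $n$. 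Hence for every fixed $i$ and $j$ one has $\left(m_{i,j+ln},\,m_{i,j-1+ln}\right)^{\mathsf T}=T_j^{\,l}\left(m_{ij},\,m_{i,j-1}\right)^{\mathsf T}$.

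The key observation is that it suffices to show that each $T_j$ is \emph{unipotent}, i.e.\ $\operatorname{tr}T_j=2$. In that case the characteristic polynomial is $(x-1)^2$, Cayley--Hamilton gives $(T_j-I)^2=0$, and therefore $T_j^{\,l}=I+l\,(T_j-I)$. Consequently every coordinate of $T_j^{\,l}\left(m_{ij},m_{i,j-1}\right)^{\mathsf T}$ is an affine-linear function of $l$; reading off the appropriate coordinate shows that each subsequence $\bigl(m_{i,(i+k-3)+ln}\bigr)_{l\ge 0}$ is an arithmetic progression whose common difference $d_{ik}$ is the corresponding coordinate of $(T_j-I)\left(m_{ij},m_{i,j-1}\right)^{\mathsf T}$, independent of $l$. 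This is precisely the definition of $n$-arithmetic, so the whole proposition reduces to the trace computation.

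To prove $\operatorname{tr}T_j=2$ I would induct along the gluing construction, noting first that $\operatorname{tr}T_j$ is independent of the starting index $j$, since cyclic permutations of $M(a_n)\cdots M(a_1)$ are conjugate. For the base case take the star-triangulation of $S_r^1$, with quiddity sequence $(2,\dots,2)$: here $M(2)=I+N$ with $N=\begin{pmatrix}1&-1\\ 1&-1\end{pmatrix}$ nilpotent, so $M(2)^r=I+rN$ is unipotent and $\neq I$. For the inductive step, recall from the remark following Corollary~\ref{corgluecuttriangle} that every triangulation of a once-punctured disc arises from a star-triangulation by successive gluings, and that $n$-gluing above $(a_k,a_{k+1})$ replaces two consecutive factors of the period product by the three factors built from $a_k\!+\!1,1,a_{k+1}\!+\!1$ (Proposition~\ref{propnglueperiodicfrieze}). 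A direct computation gives the local identity
$$M(a\!+\!1)\,M(1)\,M(b\!+\!1)=\begin{pmatrix}ab-1&-a\\ b&-1\end{pmatrix}=M(a)\,M(b),$$
valid for all $a,b$, so $n$-gluing leaves the full period product unchanged; in particular its trace and its unipotency are preserved. By induction $\operatorname{tr}T_j=2$ for every triangulation of $S_n^1$, which completes the proof.

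The main obstacle is isolating the right invariant: once one recognises that the arithmetic behaviour of the diagonals is governed by the unipotency of the monodromy, the remaining work is the short matrix identity above, which makes the gluing induction transparent. One may note in addition that $T_j\neq I$ throughout, since the identity shows $T_j$ stays conjugate to $M(2)^r$ with $r\ge 1$ the number of bridging arcs; this nonzero nilpotent $T_j-I$ is exactly what forces the progressions to be strictly increasing, matching the discussion preceding the statement. A purely combinatorial alternative, tracking the common differences $d_{ik}$ directly through $n$-gluing via Corollary~\ref{cornglueperiodicfrieze}, is possible but considerably more bookkeeping-intensive.
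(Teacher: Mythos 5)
Your argument is correct, and it takes a genuinely different route from the paper. The paper proves Proposition~\ref{proptriangulationfriezearithmetic} by induction along the cutting/gluing structure: the base case is the star-triangulation (where $d_{ik}=n$), and in the inductive step it uses Corollary~\ref{cornglueperiodicfrieze} to write each entry of $\mathcal{F}_\Pi$ as a sum of one, two or four entries of $\mathcal{F}_{\Pi_{\setminus x}}$, then checks in four congruence cases that every difference $d_{ik}$ is a corresponding sum of fixed common differences of the smaller frieze. You instead encode each {\sc se}-diagonal as an orbit of the transfer matrices $M(a)=\left(\begin{smallmatrix}a&-1\\1&0\end{smallmatrix}\right)$, reduce $n$-arithmeticity to unipotency of the period monodromy (trace $2$ and determinant $1$ give $(T_j-I)^2=0$ by Cayley--Hamilton, hence $T_j^{\,l}=I+l(T_j-I)$ and affine dependence on $l$), and preserve unipotency under gluing via the identity $M(a\!+\!1)M(1)M(b\!+\!1)=M(a)M(b)$, which I verified. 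This is more conceptual than the paper's proof: it isolates the actual invariant --- the monodromy itself, conjugate to $M(2)^r$ with $r$ the number of bridging arcs --- instead of tracking the differences entry by entry, it explains \emph{why} arithmetic progressions appear, and it yields the common differences as coordinates of $(T_j-I)v$ essentially for free (compare Proposition~\ref{propcommondifferences}). The cost is that it works at the level of quiddity sequences and Proposition~\ref{propnglueperiodicfrieze} rather than the geometry, whereas the paper's bookkeeping, though heavier, is uniform with the proofs of Theorem~\ref{thmmatchings} and Theorem~\ref{thmaltderscription}.

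Three small points to tighten. First, the recurrence $m_{ij}=a_jm_{i,j-1}-m_{i,j-2}$ holds for $j\ge i$, not $j\ge i-1$ ($m_{i,i-3}$ is undefined unless you adopt the continuant convention $m_{i,i-3}=-1$); the residue class $k=1$ of the definition must then be read off the \emph{second} coordinate of the pair vectors based at $j=i-1+ln$, which your phrase \enquote{the appropriate coordinate} does cover but deserves a sentence. Second, the degenerate gluing $n=1$, producing $(a_1\!+\!2,1)$ as in Proposition~\ref{propnglueperiodicfrieze}, is not literally an instance of your three-factor identity, because the glued pair wraps around within a single period; here one checks directly that $\operatorname{tr}\bigl(M(1)M(a_1\!+\!2)\bigr)=a_1$, which equals $2$ in the only case arising from punctured discs (the star-triangulation of $S_1^1$). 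Third, your closing remark that $T_j\ne I$ forces strictly increasing progressions is not complete as stated --- a priori the initial vector could lie in $\ker(T_j-I)$, giving a constant progression --- but since the formal definition of $n$-arithmetic does not require strict increase, this aside does not affect the proof of the proposition.
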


\begin{proof}
Clearly, if $\Pi=\Pi_\ast$ is the star-triangulation of $S_n^1$, the claim is true with $d_{ik}=n$ for all $i\in\mathbb{Z}$ and $k\in\{1,2,\dots, n\}$. In particular the claim is true for $n=1$.

We proceed with the inductive step and assume the claim holds for every $n$-periodic infinite frieze associated to a triangulation of $S_n^1$. Now we consider a triangulation $\Pi\ne\Pi_\ast $ of $S_{n+1}^1$ with quiddity sequence $q_\Pi=(a_1,a_1,\dots a_{n+1})$ and associated $(n\!+\!1)$-periodic infinite frieze $\mathcal{F}_\Pi=(m_{ij})_{j-i\ge -2}$. Lemma~\ref{lemspecialmarkedpoint} implies that $\Pi$ contains a special marked point $x\in\{1,2,\dots, n+1\}$ such that $a_x=1$, and $\Pi_{\setminus x}$ is a triangulation of $S_n^1$ with quiddity sequence $q_{\Pi_{\setminus x}}=(\check a_1,\check a_2,\dots,\check a_n)$ as in Corollary~\ref{corgluecuttriangle}. By Corollary~\ref{corspecialmpfriezeequivalce}, $\mathcal{F}_{\Pi }$ is obtained from $\mathcal{F}_{\Pi_{\setminus x}}=(\check m_{ij})_{j-i\ge -2}$ by $n$-gluing above $(\check a_{x-1},\check a_x)$.

Clearly, it is enough to show the claim for a fundamental domain of $\mathcal{F}_\Pi$. So we choose \mbox{$i,k\in\{1,2,\dots,n+1\}$} and show that $\hat m_{i,(i+k-3)+(l+1)(n+1)}- \hat m_{i,(i+k-3)+l(n+1)}$ equals a common difference for all $l\ge 0$. By using Corollary~\ref{cornglueperiodicfrieze} (for $k=x-1),$ we are abel to express the entries in $\mathcal{F}_{\Pi }$ in terms of entries in $\mathcal{F}_{\Pi_{\setminus x}}$. Doing this, we have to distinguish four cases. If $i\not \equiv x+1$ and $k\not \equiv x-i+2$ (modulo $n+1$), it follows that $m_{i,(i+k-3)+(l+1)(n+1)}-m_{i,(i+k-3)+l(n+1)}=\check m_{i_{x+1},(i+k-3)_{x-1}+(l+1)n}-\check m_{i_{x+1},(i+k-3)_{x-1}+ln}$, and by induction, the latter equals a common differences for $\mathcal{F}_{\Pi_{\setminus x}}$ for all $l\geq 0$ which gives $d_{ik}$. Suppose $i\not \equiv x+1$ and $k\equiv x-i+2$.  Then $m_{i,(i+k-3)+(l+1)(n+1)}- m_{i,(i+k-3)+l(n+1)}=\check m_{i_{x+1},(i+k-3)_{x-1}-1+(l+1)n}+\check m_{i_{x+1},(i+k-3)_{x-1}+(l+1)n}-\check m_{i_{x+1},(i+k-3)_{x-1}-1+ln}-\check m_{i_{x+1},(i+k-3)_{x-1}+ln}$, that is equal to the sum of two fixed common differences for $\mathcal{F}_{\Pi_{\setminus x}}$ for all $l\geq 0$ (inductive hypothesis), which gives $d_{ik}$. Similarly, for $\equiv  x+1$ and $k\not \equiv x-i+2$, $d_{ik}$ is also the sum of two fixed common differences  for $\mathcal{F}_{\Pi_{\setminus x}}$, and if $i\equiv  x+1$ and $k \equiv x-i+2$, $d_ik$ is determined by four fixed common differences  for $\mathcal{F}_{\Pi_{\setminus x}}$. Hence $\mathcal{F}_{\Pi }$ satisfies the arithmetic property for $r=n+1$.
\end{proof}

An immediate corollary of this proposition is the following.

\begin{cor}
Let $\mathcal{F}$ be an $n$-periodic infinite frieze. Let $\widehat{\mathcal{F}}^n$ be some $(n\!+\!1)$-periodic infinite frieze obtained from $\mathcal{F}$ by $n$-gluing and $\widecheck{\mathcal{F}}^n$ be some $(n\!-\!1)$-periodic infinite frieze obtained from $\mathcal{F}$ by $n$-cutting if defined. If $\mathcal{F}$ is $n$-arithmetic, then $\widehat{\mathcal{F}}^n$ is $(n\!+\!1)$-arithmetic and $\widecheck{\mathcal{F}}^n$ is $(n\!-\!1)$-arithmetic.
\end{cor}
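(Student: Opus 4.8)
The plan is to observe that the two claims are exactly what the inductive step in the proof of Proposition~\ref{proptriangulationfriezearithmetic} establishes, once the triangulation language is stripped away: that argument never uses that the friezes arise from once-punctured discs, only that one frieze is obtained from the other by $n$-gluing, together with the explicit formulas of Corollary~\ref{cornglueperiodicfrieze}. So for the gluing statement I would argue directly. Fix $i,k\in\{1,\dots,n+1\}$ and use Corollary~\ref{cornglueperiodicfrieze} (with $k-1$ in the role of the gluing index) to rewrite the difference
$$\hat m_{i,(i+k-3)+(l+1)(n+1)}-\hat m_{i,(i+k-3)+l(n+1)}$$
in terms of entries of $\mathcal{F}$. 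According to which of the four congruence cases the pair of indices falls into, this difference is either a single difference, a sum of two, or a sum of four differences of the form $m_{a,b+(l+1)n}-m_{a,b+ln}$. Since $\mathcal{F}$ is $n$-arithmetic, each such difference is a constant common difference of $\mathcal{F}$, and a fixed sum of constants is constant. Hence $\widehat{\mathcal{F}}^n$ is $(n+1)$-arithmetic, with its common differences expressed through those of $\mathcal{F}$.

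For the cutting statement I would exploit that $n$-cutting is the inverse of $(n-1)$-gluing (cf.\ the remark following Theorem~\ref{thmcutfrieze} and its periodic analogues Propositions~\ref{propnglueperiodicfrieze},~\ref{propncutperiodicfrieze}): writing $\widecheck{\mathcal{F}}^n=(\check m_{ij})_{j-i\ge -2}$, the frieze $\mathcal{F}$ is recovered from $\widecheck{\mathcal{F}}^n$ by $(n-1)$-gluing, so Corollary~\ref{cornglueperiodicfrieze} applies with $\widecheck{\mathcal{F}}^n$ as the smaller frieze and period $n-1$ in the role of ``$n$''. The crucial point is that every entry of $\widecheck{\mathcal{F}}^n$ occurs, through the generic (first) case of that corollary, as a single relabelled entry $\check m_{ab}=m_{ij}$ of $\mathcal{F}$: cutting merely deletes the two inserted diagonals and leaves the rest unchanged, so the ``sum'' cases never intervene (they describe exactly the deleted diagonals). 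Under this relabelling the map $j\mapsto j_k$ drops the column index by one each time $j$ crosses a full period of length $n$ of $\mathcal{F}$, so increasing the column index of $\mathcal{F}$ by $n$ with the row fixed corresponds exactly to increasing the column index of $\widecheck{\mathcal{F}}^n$ by $n-1$. Therefore each difference
$$\check m_{a,(a+k-3)+(l+1)(n-1)}-\check m_{a,(a+k-3)+l(n-1)}$$
equals a single difference $m_{i,j+(l+1)n}-m_{i,j+ln}$, which is a constant common difference of $\mathcal{F}$ by hypothesis. Thus $\widecheck{\mathcal{F}}^n$ is $(n-1)$-arithmetic.

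The main obstacle I anticipate is purely the index bookkeeping around the reduction notation $i_x=i-t$ of Corollary~\ref{cornglueperiodicfrieze}. One must check that restricting to the generic residues makes $i\mapsto i_{k+2}$ and $j\mapsto j_k$ bijections onto $\mathbb{Z}$ (each full period contributes all its diagonals but the single inserted one), that the correspondence ``jump $n$ in $\mathcal{F}$ $\leftrightarrow$ jump $n-1$ in $\widecheck{\mathcal{F}}^n$'' persists for all $l\ge 0$ without leaving the generic case, and, in the gluing direction, that the four cases partition the relevant diamonds so that the right collection of constants is summed. The degenerate boundary cases ($n=1$ for gluing and $n=2$ for cutting, where one frieze has period $1$) are immediate, since there the common differences are forced to be constant. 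No idea beyond Corollary~\ref{cornglueperiodicfrieze} and the $n$-arithmeticity hypothesis is required; the entire content lies in transporting the arithmetic progressions across the relabelling.
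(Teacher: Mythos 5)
Your proposal is correct and takes essentially the paper's route: the paper gives no separate proof, presenting the corollary as immediate from Proposition~\ref{proptriangulationfriezearithmetic}, whose inductive step is exactly your gluing computation via Corollary~\ref{cornglueperiodicfrieze} (a single difference, a sum of two, or a sum of four constant differences according to the congruence case) and never uses the triangulation hypothesis. Your cutting argument---identifying each entry of $\widecheck{\mathcal{F}}^n$ with a generic-case entry of $\mathcal{F}$ so that column jumps of $n-1$ in $\widecheck{\mathcal{F}}^n$ correspond to jumps of $n$ in $\mathcal{F}$, the sum cases being exactly the deleted diagonals---correctly supplies the converse direction that the paper leaves implicit, since the gluing implication alone (smaller arithmetic implies larger arithmetic) does not formally yield it.
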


Clearly, finite friezes are not arithmetic in our setup. So far there are no known examples of non-periodic arithmetic infinite friezes. Thus, from now on we shall assume an \emph{$n$-arithmetic frieze} to be infinite and $n$-periodic.

Note that an $n$-periodic infinite frieze being $n$-arithmetic this means that every {\sc se}-diagonal can be split into $n$ increasing arithmetic progressions. Moreover, since a fundamental domain is given by $n$ {\sc se}-diagonals we have $n^2$ increasing arithmetic progressions overall that occur. Thus an entry $1$ can only appear within the first $n\!+\!1$ rows of an $n$-periodic infinite frieze. Using this fact and very similar ideas as in the proof of Lemma~\ref{lemunitentry}, it is easy to prove the opposite direction of Lemma~\ref{lemunitentry}. We leave the details to the reader. We get the following extended version. 

\begin{prop}\label{propunitentry}
Let $\mathcal{F}_{\Pi}=(m_{ij})_{j-i\ge -2}$ be an $n$-periodic infinite frieze associated to a triangulation $\Pi$ of $S_n^1$. Then $ij$ is a peripheral arc in $\Pi$ if and only if $m_{(i+1)+ln,(j-1)+ln}=1$ whenever $j\ge i+2$, or $m_{(i+1)+ln,(j-1+n)+ln}=1$ otherwise.
\end{prop}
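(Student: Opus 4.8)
The plan is to prove the new implication (entry $1$ $\Rightarrow$ peripheral arc) by the same induction on $n$ used for Lemma~\ref{lemunitentry}, but read in reverse, relying on the observation recorded just before the statement that confines every entry equal to $1$ to the first $n+1$ rows of $\mathcal{F}_\Pi$. Since the forward implication is exactly Lemma~\ref{lemunitentry}, I would only have to show that every entry $1$ occurring in a fundamental domain of $\mathcal{F}_\Pi$ is produced by a peripheral arc. The confinement fact guarantees that there are only finitely many such entries per fundamental domain (all near the top), so the correspondence I am building is a genuine finite matching and no entry $1$ can escape detection further down a {\sc se}-diagonal, where the arithmetic progressions are strictly increasing.

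For the base cases I would check $n=1$ and $n=2$ directly. For $n=1$ the frieze is $\mathcal{F}_\ast$, whose non-trivial entries satisfy $m_{ij}=j-i+2\ge 2$; there is neither a peripheral arc nor an entry $1$, so the biconditional holds vacuously. For $n=2$ the only entry $1$ inside a fundamental domain (the first three rows) is the quiddity entry $m_{22}=a_2=1$, which is precisely the entry forced by the unique peripheral arc $11$, exactly as computed in the proof of Lemma~\ref{lemunitentry}.

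The inductive step is where the cut/glue machinery does the work. Given $\Pi\ne\Pi_\ast$ on $S_{n+1}^1$ with special marked point $x$, Corollary~\ref{corspecialmpfriezeequivalce} exhibits $\mathcal{F}_\Pi$ as the $n$-gluing of $\mathcal{F}_{\Pi_{\setminus x}}$ above $(\check a_{x-1},\check a_x)$, so I can read off each entry $\hat m_{ij}$ from Corollary~\ref{cornglueperiodicfrieze} (with $k=x-1$). The decisive point is that in the second, third and fourth cases of that corollary the entry is a sum of two, respectively four, positive frieze entries and is therefore at least $2$; hence an interior entry of $\mathcal{F}_\Pi$ can equal $1$ only in the first case, where it is literally a copy $\check m_{(i+1)_{x+1},(j-1)_{x-1}}$ of an entry of $\mathcal{F}_{\Pi_{\setminus x}}$, or else it is the newly created quiddity entry $\hat m_{xx}=a_x=1$. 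The latter corresponds, via the $j\ge i+2$ formula of Lemma~\ref{lemunitentry} with $i=x-1$, $j=x+1$, exactly to the new peripheral arc $(x-1)(x+1)$ cut off by the special triangle at $x$. In the former case the copied $1$ equals a value $1$ in $\mathcal{F}_{\Pi_{\setminus x}}$, so the induction hypothesis yields a peripheral arc of $\Pi_{\setminus x}$; reinserting the point $x$ turns it into the corresponding peripheral arc of $\Pi$. This reverses verbatim the computation $m_{i+1,j-1}=\check m_{i_x+1,j_x-1}$ already carried out in Lemma~\ref{lemunitentry}, and the wrap-around subcase $j<i+2$ is handled identically using the $n$-shifted column index. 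Since the peripheral arcs of $\Pi$ are precisely those of $\Pi_{\setminus x}$ (relabelled) together with $(x-1)(x+1)$, every entry $1$ of $\mathcal{F}_\Pi$ is matched to a peripheral arc, which closes the induction.

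The main obstacle I anticipate is purely the index bookkeeping: translating a position of an entry $1$ in $\mathcal{F}_\Pi$ through the reduction-modulo-$(n+1)$-with-shift notation $i_x$ of Corollary~\ref{cornglueperiodicfrieze} into a position in $\mathcal{F}_{\Pi_{\setminus x}}$, matching it against the relabelling of boundary marked points when $x$ is removed, and keeping the two subcases $j\ge i+2$ and $j<i+2$ straight throughout. None of this is conceptually difficult, being exactly the computation of Lemma~\ref{lemunitentry} run backwards together with the elementary fact that a sum of positive integers cannot equal $1$; this is why it is reasonable to leave the detailed verification to the reader.
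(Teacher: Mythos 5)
Your proposal is correct and takes precisely the route the paper itself indicates: the forward implication is Lemma~\ref{lemunitentry}, and the converse is that lemma's cut-and-glue induction run backwards, with the key mechanism that in Corollary~\ref{cornglueperiodicfrieze} every entry outside the first (copy) case is a sum of positive entries and hence at least $2$, so within a fundamental domain the only new entry $1$ is the quiddity entry $m_{xx}=a_x$, which matches the peripheral arc $(x-1)(x+1)$, while copied $1$'s are handled by induction. Since the paper gives no written proof here --- it invokes the confinement of $1$'s to the first $n+1$ rows and ``very similar ideas as in the proof of Lemma~\ref{lemunitentry}'' and leaves the details to the reader --- your write-up is essentially the paper's intended argument, correctly executed (the only remaining detail, which you rightly file under bookkeeping, is verifying that the translates of position $(x,x)$ are the only copy-case positions whose image lies in a trivial row).
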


%%%%%%%%%
%
\section{Description via matching numbers}\label{secmachingnumbers}
%
%%%%%%%%%

In this section we shall focus on arithmetic friezes associated to triangulations of once-punctured discs. We present a combinatorial interpretation of the numbers in such an arithmetic friezes using matching numbers between vertices and triangles. In order to do this, we introduce periodic triangulations of strips which we can interpret as triangulations of once-punctured discs. We will show that the number of matchings between vertices of a strip and triangles in a periodic triangulation of it are exactly the entries of the associated arithmetic frieze. Thus  we receive an analogous result as the one by Broline, Crowe and Isaacs in \cite{BCI} for finite friezes and triangulated polygons.

%%%%%%%%%
\subsection{Periodic triangulations of strips}
%%%%%%%%%

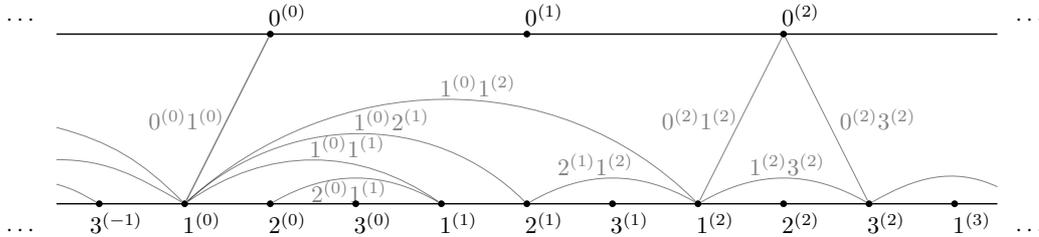
\begin{figure}[b]
\scalebox{0.9}{\begin{tikzpicture}[scale=1.25,font=\normalsize]

%upper boundary
\draw (9.5,1) node [above,shift={(0.5cm,0cm)}] {$\cdots$};
\draw (-1.5,1) node [above,shift={(-0.5 cm,0cm)}] {$\cdots$};
\draw[semithick] (-1.5,1) to (9.5,1);

%lower bouundary
\draw (9.5,-1) node [below,shift={(0.5cm,-0.2cm)}] {$\cdots$};
\draw (-1.5,-1) node [below,shift={(-0.5 cm,-0.2 cm)}] {$\cdots$};
\draw[semithick] (-1.5,-1) to (9.5,-1);

%vertices upper boundary
\node (a) at (1,1) [fill,circle,inner sep=1pt] {};
\node (b) at (4,1) [fill,circle,inner sep=1pt] {};
\node (c) at (7,1) [fill,circle,inner sep=1pt] {};

\draw (a) node [above,shift={(0.25cm,0cm)}] {$0^{(0)}$};
\draw (b) node [above,shift={(0.25cm,0cm)}] {$0^{(1)}$};
\draw (c) node [above,shift={(0.25cm,0cm)}] {$0^{(2)}$};

%vertices lower boundary
\foreach \x in {-1,0,1,2,3,4,5,6,7,8,9} {
   \begin{scope}[shift={(\x cm, 0 cm)}]
    \node (\x) at (0,-1) [fill,circle,inner sep=1pt] {};
   \end{scope}
}

\draw (-1) node [below,shift={(0.25cm,0cm)}] {$3^{(-1)}$};
\draw (0) node [below,shift={(0.25cm,0cm)}] {$1^{(0)}$};
\draw (1) node [below,shift={(0.25cm,0cm)}] {$2^{(0)}$};
\draw (2) node [below,shift={(0.25cm,0cm)}] {$3^{(0)}$};
\draw (3) node [below,shift={(0.25cm,0cm)}] {$1^{(1)}$};
\draw (4) node [below,shift={(0.25cm,0cm)}] {$2^{(1)}$};
\draw (5) node [below,shift={(0.25cm,0cm)}] {$3^{(1)}$};
\draw (6) node [below,shift={(0.25cm,0cm)}] {$1^{(2)}$};
\draw (7) node [below,shift={(0.25cm,0cm)}] {$2^{(2)}$};
\draw (8) node [below,shift={(0.25cm,0cm)}] {$3^{(2)}$};
\draw (9) node [below,shift={(0.25cm,0cm)}] {$1^{(3)}$};

%arcs
\draw[thin,opacity=0.5,out=-15,in=135] (-1.5,-0.1) to (0);
\draw[thin,opacity=0.5,out=0,in=150] (-1.5,-0.48) to (0);
\draw[thin,opacity=0.5,out=-15,in=150] (-1.5,-0.77) to (-1);

\draw[semithick,opacity=0.5] (0) to node [shift={(-0.6cm,0 cm)}] {$0^{(0)}1^{(0)}$} (a);
\draw[thin,opacity=0.5,out=42.5,in=135] (0) to node [sloped,shift={(0.5cm,0.2 cm)}] {$1^{(0)}2^{(1)}$} (4);
\draw[thin,opacity=0.5,out=30,in=150] (1) to  node [sloped,shift={(-0.1cm,-0.2 cm)}] {$2^{(0)}1^{(1)}$}  (3);
\draw[thin,opacity=0.5,out=35,in=145] (0) to node [sloped,shift={(0.5cm,0.2cm)}] {$1^{(0)}1^{(1)}$} (3);
\draw[thin,opacity=0.5,out=30,in=150] (4) to node [sloped,shift={(-0.25cm,0.2 cm)}] {$2^{(1)}1^{(2)}$}  (6);
\draw[thin,opacity=0.5] (6) to node [shift={(-0.6cm,0 cm)}] {$0^{(2)}1^{(2)}$} (c);
\draw[thin,opacity=0.5,out=30,in=150] (6) to node [sloped,shift={(0.05cm,0.2 cm)}] {$1^{(2)}3^{(2)}$}  (8);
\draw[thin,opacity=0.5] (8) to node [shift={(0.75cm,0 cm)}] {$0^{(2)}3^{(2)}$} (c);

\draw[thin,opacity=0.5,out=30,in=155] (8) to (9.5,-0.8) ;

\draw[thin,opacity=0.5,out=42.5,in=135] (0) to node [sloped,shift={(0.5cm,0.2 cm)}] {$1^{(0)}1^{(2)}$} (6);

\end{tikzpicture}}

\caption{A triangulation of the strip $\mathcal{U}_3$.}\label{figextriangulationstrip}
 \end{figure}
 
\begin{defin}
For $n\geq 1$, the \emph{strip} $\mathcal{U}_n$ in $\mathbb{R}^2$ is the Cartesian product of the real numbers and a closed interval with two disjoint countably infinite set of vertices on the upper, and on the lower boundary, respectively. The vertices on the upper boundary are labeled by $\{0^{(k)}\mid k\in\mathbb{Z}\}$ and the vertices on the lower boundary are labeled in groups of $n$ vertices by $\{1^{(k)},\dots,n^{(k)}\mid  k\in\mathbb{Z}\}$. The vertices are arranged such that $0^{(k)}$ lies above the vertices $1^{(k)},\dots,n^{(k)}$ and $k$ increases to the right, see Figure~\ref{figextriangulationstrip}.
\end{defin}

Note that the vertices on the lower boundary correspond to $\mathbb{Z}$ successive copies of the $n$ marked points on the boundary of the once-punctured disc $S_n^1$, whereas the vertices on the upper boundary correspond to $\mathbb{Z}$ copies of the puncture. Since we are mainly interested in combinatorics we assume that the vertices are evenly distributed. Throughout this article we use the following convention for vertices on the lower boundary of $\mathcal{U}_n$, namely $(i+ln)^{(k)}=(i)^{(k+l)}$ for $1\leq i\leq n$, $l\in\mathbb{Z}$. Moreover, an order on the vertices on the lower boundary of $\mathcal{U}_n$ is defined by $i^{(k)}\leq j^{(l)}$ if and only if either $k<l$, or $k=l$ and $i\leq j$, i.e.\ if and only if $i^{(k)}$ is to the left of $j^{(l)}$.

\begin{defin}\label{defsarc}
An \emph{arc} in $\mathcal{U}_n$ is a non-self-intersecting curve, up to isotopy, connecting two vertices of $\mathcal{U}_n$ such that 

\begin{inparaenum}[(\text{A}1)] 
\item at least one vertex belongs to the lower boundary of $\mathcal{U}_n$,

\item the two vertices are neither equal nor neighbors,

\item if one vertex belongs to the upper boundary of $\mathcal{U}_n$, then the superscripts of the two vertices are equal.
\end{inparaenum}

An arc in $\mathcal{U}_n$ connecting two vertices on the lower boundary is called \emph{peripheral}, it is called \emph{bridging} otherwise. 
\end{defin}

\begin{rem}
Compared with the model of the strip used in \cite{{HJ}} in this article we exclude arcs connecting two vertices on the upper boundary. Moreover, we also assign a unique bridging arc to every vertex on the lower boundary, see (A3) in Definition~\ref{defsarc}. These additional conditions are motivated by the idea to identify the vertices on the upper boundary with a single vertex, where all boundary segments in between two vertices vanish. 
\end{rem}

We use the following notation for arcs of strips: the peripheral arc with vertex $i^{(k)}$ on the left and vertex $j^{(l)}$ on the right is denoted by $i^{(k)}j^{(l)}$, in this case either $k<l$, or $i\leq j-2$ for $k=l$. Moreover, $0^{(k)}j^{(k)}$ denotes the unique bridging arc connecting the vertex $j^{(k)}$ on the lower boundary with the vertex $0^{(k)}$ on the upper boundary.

\begin{figure}[t]
\resizebox{0.9\linewidth}{!}{\begin{tikzpicture}[font=\normalsize]

\draw (12.5,1.25) node [above,shift={(0.5cm,0cm)}] {$\cdots$};
\draw (-1.5,1.25) node [above,shift={(-0.5 cm,0cm)}] {$\cdots$};
\draw[semithick] (-1.5,1.25) to (12.5,1.25);

\node (a) at (-1,1.25) [fill,myblue,circle,inner sep=1pt] {};
\node (b) at (4,1.25) [fill,myblue,circle,inner sep=1pt] {};
\node (c) at (9,1.25) [fill,circle,inner sep=1pt] {};

\draw (a) node [myblue,above,shift={(0.25cm,0cm)}] {$0^{(-1)}$};
\draw (b) node [myblue,above,shift={(0.25cm,0cm)}] {$0^{(0)}$};
\draw (c) node [above,shift={(0.25cm,0cm)}] {$0^{(1)}$};

\foreach \x in {-1,0,1,2,3,4,5,6,7,8,9,10,11,12} {
   \begin{scope}[shift={(\x cm, 0 cm)}]
    \node (\x) at (0,-1) [fill,circle,inner sep=1pt] {};
   \end{scope}
}

\draw (12.5,-1) node [below,shift={(0.5cm,-0.2cm)}] {$\cdots$};
\draw (-1.5,-1) node [below,shift={(-0.5 cm,-0.2 cm)}] {$\cdots$};
\draw[semithick] (-1.5,-1) to (12.5,-1);

\draw (-1) node [myblue,below,shift={(0.25cm,0cm)}] {$4^{(-1)}$};
\draw (0) node [myblue,below,shift={(0.25cm,0cm)}] {$5^{(-1)}$};
\draw (1) node [myblue,below,shift={(0.25cm,0cm)}] {$1^{(0)}$};
\draw (2) node [myblue,below,shift={(0.25cm,0cm)}] {$2^{(0)}$};
\draw (3) node [myblue,below,shift={(0.25cm,0cm)}] {$3^{(0)}$};
\draw (4) node [myblue,below,shift={(0.25cm,0cm)}] {$4^{(0)}$};
\draw (5) node [myblue,below,shift={(0.25cm,0cm)}] {$5^{(0)}$};
\draw (6) node [below,shift={(0.25cm,0cm)}] {$1^{(1)}$};
\draw (7) node [below,shift={(0.25cm,0cm)}] {$2^{(1)}$};
\draw (8) node [below,shift={(0.25cm,0cm)}] {$3^{(1)}$};
\draw (9) node [below,shift={(0.25cm,0cm)}] {$4^{(1)}$};
\draw (10) node [below,shift={(0.25cm,0cm)}] {$5^{(1)}$};
\draw (11) node [below,shift={(0.25cm,0cm)}] {$1^{(2)}$};
\draw (12) node [below,shift={(0.25cm,0cm)}] {$2^{(2)}$};

%fundamenal region
\draw[opacity=0, fill=myblue,fill opacity=0.15] (0,-1) -- (-1,1.25) -- (4,1.25) -- (5,-1) -- cycle;
\foreach \x in {0,1,2,3,4,5} {
   \begin{scope}[shift={(\x cm, 0 cm)}]
    \node (\x) at (0,-1) [myblue,fill,circle,inner sep=1pt] {};
   \end{scope}
}
\node (a) at (-1,1.25) [fill,myblue,circle,inner sep=1pt] {};
\node (b) at (4,1.25) [fill,myblue,circle,inner sep=1pt] {};

\draw (a) node [myblue,shift={(2.5cm,-0.5cm)}] {$\mathcal{P}$};

%arcs in fd

\draw[myblue,thin] (0) to node [shift={(0.8cm,0.15 cm)}] {$0^{(-1)}5^{(-1)}$} (a);
\draw[myblue,thin,out=42.5,in=135] (0) to node [sloped,shift={(0.3cm,0.2 cm)}] {$5^{(-1)}5^{(0)}$} (5);
\draw[myblue,thin,out=30,in=150] (0) to node [sloped,shift={(0.5cm,0.2 cm)}] {$5^{(-1)}2^{(0)}$}  (2);
\draw[myblue,thin,out=35,in=145] (2) to node [sloped,shift={(-0.45cm,0.15 cm)}] {$2^{(0)}5^{(0)}$} (5);
\draw[myblue,thin,out=30,in=150] (2) to node [sloped,shift={(0cm,-0.15 cm)}] {$2^{(0)}4^{(0)}$}  (4);
\draw[myblue,thin] (5) to node [shift={(0.6cm,0.15 cm)}] {$0^{(0)}5^{(0)}$} (b);

%arcs
\draw[thin,opacity=0.5,out=42.5,in=135] (5) to (10);
\draw[thin,out=30,in=150,myred,dash pattern=on 1pt off 4pt on 6pt off 4pt] (5) to node [sloped,shift={(0.5cm,0.2 cm)}] {$5^{(0)}2^{(1)}$} (7);
\draw[thin,opacity=0.5,out=35,in=145] (7) to (10);
\draw[thin,opacity=0.5,out=30,in=150] (7) to (9);

\draw[thin,opacity=0.5] (10) to (c);
\draw[thin,out=30,in=150,myred,dash pattern=on 1pt off 4pt on 6pt off 4pt] (10) to node [sloped,shift={(0.5cm,0.2 cm)}] {$5^{(1)}2^{(2)}$} (12);
\draw[thin,opacity=0.5,out=42.5,in=180] (10) to (12.5,0);
\draw[thin,opacity=0.5,out=35,in=200] (12) to (12.5,-0.72);
\draw[thin,opacity=0.5,out=30,in=195] (12) to (12.5,-0.77);

\draw[thin,opacity=0.5,out=-15,in=135] (-1.5,-0.1) to (0);
\draw[thin,opacity=0.5,out=0,in=150] (-1.5,-0.48) to (0);
\draw[thin,opacity=0.5,out=-15,in=150] (-1.5,-0.77) to (-1);

\end{tikzpicture}}
\caption{A $5$-periodic triangulation of $\mathcal{U}_5$ with fundamental domain $\mathcal{P}$ asso\-ciated to the triangulation of $S_5^1$ given in Figure~\ref{figexquidditysequencedisc}.}\label{figextriangulationstrip5p1}
\end{figure}
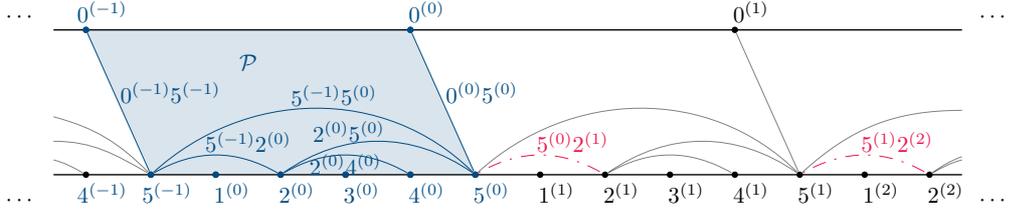

\begin{defin}
Two arcs in $\mathcal{U}_n$ are called \emph{non-crossing} if they have no point of intersection in the interior of $\mathcal{U}_n$. A \emph{triangulation} of $\mathcal{U}_n$ is a maximal collection $\mathcal{T}$ of pairwise non-crossing arcs in $\mathcal{U}_n$.
\end{defin}

Note that $\mathcal{U}_n$ is partitioned by a triangulation into regions called \emph{triangles}. These have three, or more sides. If we identify all vertices on the upper boundary, the segments on this boundary vanishes and all domains are $3$-sided. Figure~\ref{figextriangulationstrip} shows part of a triangulation of $\mathcal{U}_3$.

\begin{defin}
A triangulation $\mathcal{T}=\mathcal{T}_n$ of $\mathcal{U}_n$ is called $n$-\emph{periodic} if  there exists an \mbox{$(n\!+\!3)$-gon} $\mathcal{P}$ in $\mathcal{T}_n$ such that $\mathcal{T}_n$ is covered by iteratively performing an appropriate translation of $\mathcal{P}$ in both horizontal directions. We say that $\mathcal{P}$ is a \emph{fundamental domain} for $\mathcal{T}_n$.
\end{defin}

Note that we actually ignore the fact that covering the whole triangulation by \mbox{$(n\!+\!3)$-gons} leads to very small overlaps, as with every translation we place a bridging arc on a bridging arc.

While a fundamental domain for a given periodic triangulation of a strip is not unique, they all give rise to the entire triangulation. For fixed $n$, the number of different fundamental domains equals, up to translation, the number of bridging arcs ending at a vertex on the upper boundary. Figure~$\ref{figextriangulationstrip5p1}$ shows an example of a $5$-periodic triangulation of $\mathcal{U}_5$ with a fundamental domain given by an octagon, where in this example the fundamental domain is unique up to translation.\\

We call the periodic triangulation consisting only of bridging arcs \emph{star-triangulation}, denoted by $\mathcal{T}_\ast$. See Figure~\ref{figbstartriangulationstrip} where $\mathcal{P}_0$ is one of the $n$ possible choices for a fundamental domain.

\begin{rem}\label{remfundamentaldomainstrip}
In general, if a triangulation of a strip has translational symmetry, bridging arcs occur repetitively in it. Since we restrict to fundamental domains for an $n$-periodic triangulation $\mathcal{T}_n$ of $\mathcal{U}_n$ given by $(n\!+\!3)$-gons it follows by definition that a fundamental domain $\mathcal{P}$ for $\mathcal{T}_n$ looks as in the following figure
\begin{center}
\scalebox{0.9}{\begin{tikzpicture}[scale=1,font=\normalsize]

\node (a) at (2,1) [fill,circle,inner sep=1pt] {};
\node (b) at (8,1) [fill,circle,inner sep=1pt] {};

\draw (a) node [myblue,above] {$0^{(k)}$};
\draw (b) node [myblue,above] {$0^{(k+1)}$};
\draw (8.5,1) node [above,shift={(0.5 cm,0.1 cm)}] {$\dots$};
\draw (-0.5,1) node [above,shift={(-0.5 cm,0.1 cm)}] {$\dots$};
\draw[semithick] (-0.5,1) to (8.5,1);

\foreach \x in {0,1,3,4,6} {
   \begin{scope}[shift={(\x cm, 0 cm)}]
    \node (\x) at (0,-1) [fill,circle,inner sep=1pt] {};
   \end{scope}
}
\draw (0) node [myblue,below] {$i^{(k)}$};
\draw (1) node [myblue,below] {($i+1)^{(k)}$};
\node at (2.2cm,-1.15cm) [myblue,below] {$\cdots$};
\draw (3) node [myblue,below] {$n^{(k)}$};
\draw (4) node [myblue,below] {$1^{(k+1)}$};
\node at (5cm,-1.15cm) [myblue,below] {$\cdots$};
\draw (6) node [myblue,below] {$i^{(k+1)}$};

\draw (8.5,-1) node [below,shift={(0.5 cm,-0.2 cm)}] {$\cdots$};
\draw (-0.5,-1) node [below,shift={(-0.50 cm,-0.2 cm)}] {$\cdots$};
\draw[semithick] (-0.5,-1) to (8.5,-1);

%fd
\draw[opacity=0,fill=myblue,fill opacity=0.15] (0,-1) -- (2,1) -- (8,1) -- (6,-1) -- cycle;
\node (a) at (2,1) [myblue,fill,circle,inner sep=1pt] {};
\node (b) at (8,1) [myblue,fill,circle,inner sep=1pt] {};
\foreach \x in {0,1,3,4,6} {
   \begin{scope}[shift={(\x cm, 0 cm)}]
    \node (\x) at (0,-1) [myblue,fill,circle,inner sep=1pt] {};
   \end{scope}
}
\draw (a) node [myblue,shift={(2cm,-0.75cm)}] {$\mathcal{P}$};

%arcs
\draw[myblue,thin] (0) to (a);
\draw[myblue,thin] (6) to (b);

\end{tikzpicture}}
\end{center}

\noindent for some $i\in\{1,2\dots,n\}$ and some integer $k$. In particular, for every integer $k$ there is a bridging arc at $0^{(k)}$. Moreover, the interior of $\mathcal{P}$ contains $n\!-\!1$ pairwise non-crossing arcs dividing $\mathcal{P}$ into $n$ triangles. One of these has four, and all the others have three distinct sides. Recall, our philosophy of viewing the vertices on the upper boundary to be a single vertex gives a suitable understanding of the quadrilateral in $\mathcal{P}$ as a triangle and explains why we ignore  segments of the upper boundary.
\end{rem}

\begin{figure}[t]
\scalebox{0.9}{\begin{tikzpicture}[scale=1,font=\normalsize] 

\node (a) at (2,1) [fill,circle,inner sep=1pt] {};
\node (b) at (7,1) [fill,circle,inner sep=1pt] {};

\draw (a) node [myblue,above] {$0^{(0)}$};
\draw (b) node [myblue,above] {$0^{(1)}$};
\draw (10.5,1) node [above,shift={(0.5 cm,0.1 cm)}] {$\dots$};
\draw (-0.5,1) node [above,shift={(-0.5 cm,0.1 cm)}] {$\dots$};
\draw[semithick] (-0.5,1) to (10.5,1);

\foreach \x in {0,1,3,4,5,6,8,9,10} {
   \begin{scope}[shift={(\x cm, 0 cm)}]
    \node (\x) at (0,-1) [fill,circle,inner sep=1pt] {};
   \end{scope}
}
\draw (0) node [myblue,below] {$1^{(0)}$};
\draw (1) node [myblue,below] {$2^{(0)}$};
\node at (1.8cm,-1.15cm) [myblue,below] {$\cdots$};
\draw (3) node [myblue,below,shift={(-0.05 cm,0 cm)}] {$(n-1)^{(0)}$};
\draw (4) node [myblue,below] {$n^{(0)}$};
\draw (5) node [myblue,below] {$1^{(1)}$};
\draw (6) node [below] {$2^{(1)}$};
\node at (6.8cm,-1.15cm) [below] {$\cdots$};
\draw (8) node [below,shift={(-0.05 cm,0 cm)}] {$(n-1)^{(1)}$};
\draw (9) node [below] {$n^{(1)}$};
\draw (10) node [below] {$1^{(2)}$};
\draw (10.5,-1) node [below,shift={(0.5 cm,-0.2 cm)}] {$\cdots$};
\draw (-0.5,-1) node [below,shift={(-0.50 cm,-0.2 cm)}] {$\cdots$};
\draw[semithick] (-0.5,-1) to (10.5,-1);

%fd
\draw[opacity=0,fill=myblue,fill opacity=0.15] (0,-1) -- (2,1) -- (7,1) -- (5,-1) -- cycle;
\node (a) at (2,1) [myblue,fill,circle,inner sep=1pt] {};
\node (b) at (7,1) [myblue,fill,circle,inner sep=1pt] {};
\foreach \x in {0,1,3,4,5} {
   \begin{scope}[shift={(\x cm, 0 cm)}]
    \node (\x) at (0,-1) [myblue,fill,circle,inner sep=1pt] {};
   \end{scope}
}

\draw (a) node [myblue,shift={(2.5cm,-0.75cm)}] {$\mathcal{P}_0$};

%arcs
\draw[myblue,thin] (0) to (a);
\draw[myblue,thin] (1) to (a);
\draw[myblue,thin] (3) to (a);
\draw[myblue,thin] (4) to (a);

\draw[myblue,thin] (5) to (b);
\draw[thin,opacity=0.5] (6) to (b);
\draw[thin,opacity=0.5] (8) to (b);
\draw[thin,opacity=0.5] (9) to (b);

\draw[thin,opacity=0.5] (10) to (10.5,-0.5);
\end{tikzpicture}}
\caption{The star-triangulation $\mathcal{T}_\ast$ of $\mathcal{U}_n$ with a fundamental domain $\mathcal{P}_0$.}\label{figbstartriangulationstrip}
\end{figure}
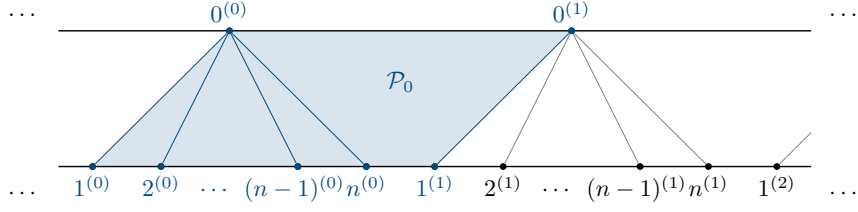

The next lemma points out which arcs can appear in a periodic triangulation of a strip. It is an immediate consequence of the definitions.

\begin{lem}\label{lemarcinfundamentaldomain}
Let $i^{(k)}j^{(l)}$ be an arc in a $n$-periodic triangulation $\mathcal{T}_n$ of $\mathcal{U}_n$. Then $l$ is either $k$, or $k+1$. Moreover, $i^{(k)}j^{(k)}\in \mathcal{T}_n$ implies $i<j$, and $i^{(k)}j^{(k+1)}\in \mathcal{T}_n$ implies $i\geq j$. 
\end{lem}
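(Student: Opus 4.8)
The plan is to exploit the bridging arcs that a periodic triangulation is forced to contain, together with the fact that any two arcs of $\mathcal{T}_n$ are non-crossing. Write the arc as $i^{(k)}j^{(l)}$ with $i^{(k)}$ to the left of $j^{(l)}$; by the ordering convention on the lower boundary this already gives $k\le l$, and in the case $l=k$ it gives $i\le j-2<j$, which is the first of the two refinements. It therefore remains to bound $l$ from above and to treat the case $l=k+1$. To keep the combinatorics transparent I would assign to each lower vertex $i^{(k)}$ the coordinate $\mathrm{pos}(i^{(k)})=kn+i$ (with $1\le i\le n$), so that $i^{(k)}\le j^{(l)}$ exactly when $\mathrm{pos}(i^{(k)})\le \mathrm{pos}(j^{(l)})$.

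The key tool is the following separation principle: if a lower vertex $v$ satisfies $\mathrm{pos}(i^{(k)})<\mathrm{pos}(v)<\mathrm{pos}(j^{(l)})$ and $v$ carries a bridging arc $0^{(m)}v$, then the peripheral arc $i^{(k)}j^{(l)}$ must cross $0^{(m)}v$. Indeed, the bridging arc together with the boundary of $\mathcal{U}_n$ separates the strip into a left and a right region, and the two endpoints of $i^{(k)}j^{(l)}$ then lie in different regions; since $\mathcal{T}_n$ is a triangulation the two arcs cannot cross, a contradiction. Hence \emph{no bridging vertex lies strictly between the endpoints of a peripheral arc of $\mathcal{T}_n$}.

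Next I would feed in the bridging arcs supplied by Remark~\ref{remfundamentaldomainstrip}: for every integer $m$ there is a bridging arc at $0^{(m)}$, and by $n$-periodicity these all land at a common residue, say at the vertex $p^{(m)}$ with $\mathrm{pos}(p^{(m)})=mn+p$ for a fixed $p\in\{1,\dots,n\}$. Writing $\mathrm{pos}(i^{(k)})=kn+i$ and $\mathrm{pos}(j^{(l)})=ln+j$, suppose first that $l\ge k+2$. Then the whole of period $k+1$, in particular the vertex $p^{(k+1)}$ at position $(k+1)n+p$, lies strictly between $kn+i$ and $ln+j$, contradicting the separation principle; therefore $l\le k+1$, which is the first assertion. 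For the remaining case $l=k+1$, the vertices $p^{(k)}$ and $p^{(k+1)}$ must each fail to lie strictly between the endpoints: the former forces $kn+p\le kn+i$, i.e.\ $p\le i$, and the latter forces $(k+1)n+p\ge (k+1)n+j$, i.e.\ $p\ge j$. Combining yields $j\le p\le i$, that is $i\ge j$, as claimed.

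The only genuinely non-combinatorial ingredient is the separation principle, so the main point to make precise is that a bridging vertex strictly between the two endpoints really does force a crossing; once this is granted, everything else is bookkeeping with the coordinate $\mathrm{pos}$. An essentially equivalent route would bypass the separation statement by invoking the explicit shape of a fundamental domain in Remark~\ref{remfundamentaldomainstrip}: every arc of $\mathcal{T}_n$ is a translate of an arc contained in a fundamental domain $\mathcal{P}$ whose lower vertices are exactly the $n+1$ consecutive vertices $p^{(k)},(p+1)^{(k)},\dots,p^{(k+1)}$, and reading off the two possible placements of the endpoints of a peripheral arc within this single block gives the two cases $l=k$ (both labels among $p^{(k)},\dots,n^{(k)}$ or among $1^{(k+1)},\dots,p^{(k+1)}$, whence $i<j$) and $l=k+1$ (one label in each part, the left being $\ge p$ and the right being $\le p$, whence $i\ge j$) directly.
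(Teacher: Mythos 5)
Your proof is correct. The paper itself offers no written argument: it introduces the lemma with the sentence that it ``is an immediate consequence of the definitions'', and the intended reading is exactly the alternative you sketch in your closing paragraph --- by Remark~\ref{remfundamentaldomainstrip} a fundamental domain is an $(n\!+\!3)$-gon whose lower vertices are the $n+1$ consecutive vertices $p^{(k)},\dots,p^{(k+1)}$, every arc of the periodic triangulation lies in some translate of it, and the two possible placements of a peripheral arc's endpoints inside that block yield precisely $l\in\{k,k+1\}$ together with the stated inequalities. Your main route is genuinely different in emphasis: you isolate the separation principle (a bridging arc is a cross-cut of the simply connected strip, so it separates it into two regions, and non-crossing forbids a peripheral arc from having endpoints in both), and then feed in the periodic family of bridging arcs $0^{(m)}p^{(m)}$ supplied by Remark~\ref{remfundamentaldomainstrip}. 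This supplies the topological reason why no arc can span more than one period, rather than leaning on the unproved assertion that each arc sits inside a single translate of $\mathcal{P}$ --- an assertion which itself rests on the same non-crossing-with-bridging-arcs fact. What your version buys is self-containedness and rigor; what the paper's implicit version buys is brevity. Two points worth making explicit in a write-up: first, the isotopy class of a peripheral arc in the strip is determined by its endpoints (the strip is simply connected and an arc between two lower vertices cannot enclose marked points of the upper boundary), so ``lies strictly between the endpoints'' is independent of the chosen representatives; second, the bridging case of the lemma is settled trivially by condition (A3) of Definition~\ref{defsarc}, so restricting to peripheral arcs is harmless. Your bookkeeping with $\mathrm{pos}$ is correct throughout, including the boundary cases $p\le i$ and $p\ge j$ when $l=k+1$, which correctly allow the arc's endpoints to coincide with the bridging vertices.
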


\begin{defin}
Let $\mathcal{P}$ be a fundamental domain for an $n$-periodic triangulation $\mathcal{T}_n$ of $\mathcal{U}_n$ and let  $\gamma$ be an arc in $\mathcal{P}$. An arc $\bar\gamma$ in $\mathcal{T}_n$ is called \emph{$n$-translate} of $\gamma$ if $\bar\gamma$ corresponds to $\gamma$ in a translated copy of $\mathcal{P}$.
\end{defin}

In \mbox{Figure~\ref{figextriangulationstrip5p1}} two $5$-translates of the arc $1^{(0)}3^{(0)}$ are indicated red. In particular, for an arc $\gamma=i^{(k)}j^{(l)}$ in an $n$-periodic triangulation of $\mathcal{U}_n$, Lemma~\ref{lemarcinfundamentaldomain} tells us that the family of its \mbox{$n$-translates} is given by
$$\bigl\{i^{(k+m)}j^{(l+m)}\bigr\}_{m\in\mathbb{Z}\setminus\{0\}}=
\begin{cases}
\bigl\{i^{(k')}j^{(k')}\bigr\}_{k'\in\mathbb{Z}\setminus\{k\}}&\text{if } i<j,\\
\bigl\{i^{(k')}j^{(k'+1)}\bigr\}_{k'\in\mathbb{Z}\setminus\{k\}} &\text{if } i\geq j.
\end{cases}$$

Note that given an $n$-periodic triangulation $\mathcal{T}_n$ of $\mathcal{U}_n$, the number of triangles in $\mathcal{T}_n$ incident with a vertex $i^{(k)}$ of $\mathcal{U}_n$ does not depend on $k$. This fact is to keep in mind throughout this section. In particular, the next definition already makes use of it.

\begin{defin}
The \emph{quiddity sequence} $q_{\mathcal{T}_n}$ of an $n$-periodic triangulation $\mathcal{T}_n$ of $\mathcal{U}_n$ is a finite sequence $(a_1,a_2,\dots, a_n)$ of positive integers, where $a_i$ is the number of triangles incident with the vertex $i^{(k)}$, on the lower boundary of $\mathcal{U}_n$. A vertex $i^{(k)}$ on the lower boundary is called \emph{special} with respect to $\mathcal{T}_n$ if $a_i=1$.
\end{defin}

Note that for a fixed $n$, the quiddity sequence of a periodic triangulation of a strip is only determined up to cyclic equivalence.

%%%%%%%%%
\FloatBarrier
\subsection{From triangulations of once-punctured discs to periodic triangulations of strips}\label{secPsi}
%%%%%%%%%

Our next goal is to give a bijection between triangulations of once-punctured discs and periodic triangulations of strips. First, from a triangulation $\Pi$ of $S_n^1$ we construct an $n$-periodic triangulation $\mathcal{T}_n=\Phi(\Pi)$ of $\mathcal{U}_n$ by associating a family of arcs in $\mathcal{U}_n$ to every arc of $\Pi$.

\begin{figure}[t]
\scalebox{0.9}{\begin{tikzpicture}[font=\normalsize] 
%\Pi$
\node (a) at (5.5,3.5) [fill,circle,inner sep=1pt] {};
\draw (5.5,3.5) circle (1cm);
\foreach \x in {-144,-72,0,72,144} {
   \begin{scope}[shift={(5.5,3.5)},rotate=\x]
    \node (\x) at (0,-1) [fill,circle,inner sep=1pt] {};
   \end{scope}
}
\draw (a) node [above,shift={(0cm,0.1cm)}] {$0$};
\draw (144) node [above right] {$2$};
\draw (72) node [below right] {$3$};
\draw (0) node [below] {$4$};
\draw (-72) node [below left] {$5$};
\draw (-144) node [above left] {$1$};
\draw[thin,opacity=0.5] (a) to (-144);
\draw[thin, opacity=0.5] (a) to (144);
\draw[thin,opacity=0.5,out=-30,in=-150] (-72) to (72);
\draw[thin,opacity=0.5,out=-165,in=-45] (72) to (-0.3+5.5,-0.3+3.5);
\draw[thin,opacity=0.5,out=135,in=-90] (-0.3+5.5,-0.3+3.5) to (-144);

\draw[thin,opacity=0.5,out=-135,in=-80] (0.2+5.5,-0.2+3.5) to (-144);
\draw[thin,opacity=0.5,out=45,in=-100] (0.2+5.5,-0.2+3.5) to (144);

\draw (a) node [shift={(-2cm,0cm)}] {$\Pi$};

\draw[semithick,|->] (5.5,1.75) -- (5.5,1.25);

%$\Phi(\Pi)$
\node (a) at (2,0.75) [fill,circle,inner sep=1pt] {};
\node (b) at (7,0.75) [fill,circle,inner sep=1pt] {};

\draw (a) node [myblue,above,shift={(0.25cm,0cm)}] {$0^{(0)}$};
\draw (b) node [myblue,above,shift={(0.25cm,0cm)}] {$0^{(1)}$};
\draw (10.5,0.75) node [above,shift={(0.5 cm,0.1 cm)}] {$\dots$};
\draw (-0.5,0.75) node [above,shift={(-0.5 cm,0.1 cm)}] {$\dots$};
\draw[semithick] (-0.5,0.75) to (10.5,0.75);

\foreach \x in {0,1,2,3,4,5,6,7,8,9,10} {
   \begin{scope}[shift={(\x cm, 0 cm)}]
    \node (\x) at (0,-1) [fill,circle,inner sep=1pt] {};
   \end{scope}
}
\draw (0) node [myblue,below,shift={(0.25cm,0cm)}] {$1^{(0)}$};
\draw (1) node [myblue,below,shift={(0.25cm,0cm)}] {$2^{(0)}$};
\draw (2) node [myblue,below,shift={(0.25cm,0cm)}] {$3^{(0)}$};
\draw (3) node [myblue,below,shift={(0.25cm,0cm)}] {$4^{(0)}$};
\draw (4) node [myblue,below,shift={(0.25cm,0cm)}] {$5^{(0)}$};
\draw (5) node [myblue,below,shift={(0.25cm,0cm)}] {$1^{(1)}$};
\draw (6) node [below,shift={(0.25cm,0cm)}] {$2^{(1)}$};
\draw (7) node [below,shift={(0.25cm,0cm)}] {$3^{(1)}$};
\draw (8) node [below,shift={(0.25cm,0cm)}] {$4^{(1)}$};
\draw (9) node [below,shift={(0.25cm,0cm)}] {$5^{(1)}$};
\draw (10) node [below,shift={(0.25cm,0cm)}] {$1^{(2)}$};
\draw (10.5,-1) node [below,shift={(0.5 cm,-0.2 cm)}] {$\cdots$};
\draw (-0.5,-1) node [below,shift={(-0.50 cm,-0.2 cm)}] {$\cdots$};
\draw[semithick] (-0.5,-1) to (10.5,-1);

%fd
\draw[opacity=0,fill=myblue,fill opacity=0.15] (0,-1) -- (2,0.75) -- (7,0.75) -- (5,-1) -- cycle;
\node (a) at (2,0.75) [myblue,fill,circle,inner sep=1pt] {};
\node (b) at (7,0.75) [myblue,fill,circle,inner sep=1pt] {};
\foreach \x in {0,1,2,3,4,5} {
   \begin{scope}[shift={(\x cm, 0 cm)}]
    \node (\x) at (0,-1) [myblue,fill,circle,inner sep=1pt] {};
   \end{scope}
}
\draw (a) node [myblue,shift={(2.5cm,-0.75cm)}] {$\mathcal{P}_0$};

%arsc
\draw[myblue,thin] (0) to (a);
\draw[myblue,thin] (1) to (a);
\draw[myblue,thin,out=55,in=125] (1) to (5);
\draw[myblue,thin,out=30,in=150] (2) to (4);
\draw[myblue,thin,out=45,in=135] (2) to (5);

\draw[myblue,thin] (5) to (b);
\draw[thin,opacity=0.5] (6) to (b);
\draw[thin,opacity=0.5,out=55,in=125] (6) to (10);
\draw[thin,opacity=0.5,out=30,in=150] (7) to (9);
\draw[thin,opacity=0.5,out=45,in=135] (7) to (10);

\draw[thin,opacity=0.5] (10) to (10.5,-0.6);
\draw[thin,opacity=0.5,out=-40,in=125] (-0.5,-0.48) to (0);
\draw[thin,opacity=0.5,out=-35,in=135] (-0.5,-0.6) to (0);

\node (0,0) [shift={(-2 cm,-0.125 cm)}]  {$\Phi(\Pi)$};
\end{tikzpicture}}
\caption{A triangulation of $S_5^1$ with associated $5$-periodic triangulation of $\mathcal{U}_5$.}\label{figexbijection}
\end{figure}
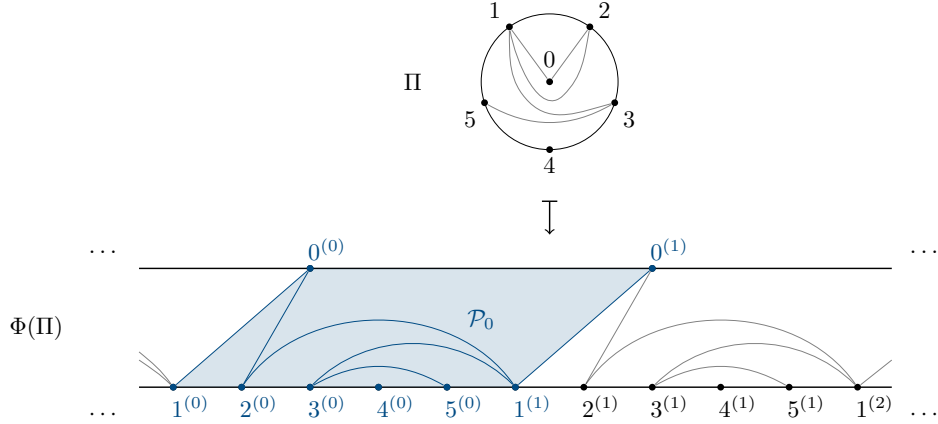

This is realized as follows: for every integer $k$ there is a natural embedding $\iota_k$ of the set $A(S_n^1)$ of arcs in $S_n^1$ into the set $A(\mathcal{U}_n)$ of arcs in $\mathcal{U}_n$, given by
 $$\iota_k(ij)=\begin{cases}
i^{(k)}j^{(k)}&\text{if }i<j,\\
i^{(k)}j^{(k+1)}&\text{if }i\ge j.
\end{cases}$$
We define the set $\Phi(\Pi)$ of arcs in $\mathcal{U}_n$ by the union of the disjoint sets $\iota_k(\Pi)$, $k\in\mathbb{Z}$, i.e.\
$$\Phi(\Pi)=\bigcup_{k\in\mathbb{Z}}\iota_k(\Pi).$$

Note that if an arc is contained in $\Phi(\Pi)$ then so are all its $n$-translates. Moreover, $\iota_k$ sends bridging arcs to bridging arcs. Thus $\Phi(\Pi)$ contains at least one bridging arc at $0^{(k)}$ for every integer $k$ since every triangulation of $S_n^1$ contains a bridging arc.  Clearly, the star-triangulation of $S_n^1$ gives the star-triangulation of $\mathcal{U}_n$. An other example is illustrated in Figure~\ref{figexbijection}.

\begin{prop}
Let $\Pi$ be a triangulation of $S_n^1$. Then $\mathcal{T}_n=\Phi(\Pi)$ is an $n$-periodic triangulation of $\mathcal{U}_n$.
\end{prop}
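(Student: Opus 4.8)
The plan is to verify the three defining properties of an $n$-periodic triangulation of $\mathcal{U}_n$ in turn: that $\Phi(\Pi)$ consists of genuine, pairwise non-crossing arcs, that it is maximal, and that it admits an $(n\!+\!3)$-gon fundamental domain. The conceptual engine throughout is that $\mathcal{U}_n$ is the unrolling of $S_n^1$ around the puncture: there is a natural projection $p\colon\mathcal{U}_n\to S_n^1$ sending $i^{(k)}\mapsto i$ (for $1\le i\le n$) and each upper vertex $0^{(k)}$ to the puncture $0$, and the translation $T\colon x^{(k)}\mapsto x^{(k+1)}$ generates its group of deck transformations. By the definition of the embeddings $\iota_k$ one has $T\circ\iota_k=\iota_{k+1}$, so $\Phi(\Pi)=\bigcup_k\iota_k(\Pi)$ is exactly the full preimage $p^{-1}(\Pi)$, and in particular it is $T$-invariant.

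First I would check that every element of $\Phi(\Pi)$ is an arc of $\mathcal{U}_n$ in the sense of Definition~\ref{defsarc}. This is immediate from the case distinction defining $\iota_k$: a bridging arc $0j$ of $\Pi$ is sent to $0^{(k)}j^{(k)}$, which has equal superscripts and so satisfies (A3), while a peripheral arc $ij$ is sent to a peripheral arc $i^{(k)}j^{(k)}$ or $i^{(k)}j^{(k+1)}$ according to whether $i<j$ or $i\ge j$, the non-adjacency of $i,j$ in $S_n^1$ guaranteeing (A2). Lemma~\ref{lemarcinfundamentaldomain} records precisely the superscript patterns that then occur.

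The heart of the argument is non-crossing, and I expect this step to be the main obstacle. Here I would use that $p$ restricts to a covering map away from the marked points and that $\Pi$ is non-crossing. Concretely, think of $\Phi(\Pi)$ as obtained by cutting $S_n^1$ along the radial segment $c$ running from the puncture through the boundary gap between the marked points $n$ and $1$, and then unrolling: a peripheral arc meets $c$ at most once, and meets it exactly when it wraps from a higher to a lower index, which is exactly the case $i\ge j$ producing a block-jump $i^{(k)}j^{(k+1)}$, whereas bridging arcs and the remaining peripheral arcs stay within a single block. Consequently two arcs of $\Phi(\Pi)$ that met transversally in the interior of $\mathcal{U}_n$ would project under $p$ either to a transversal interior crossing of two arcs of $\Pi$, contradicting that $\Pi$ is a triangulation, or (when they are $n$-translates of a single arc of $\Pi$) to a single arc, in which case the two lifts share at most an endpoint and cannot cross in the interior. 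Either way no crossing can occur, so $\Phi(\Pi)$ is a set of pairwise non-crossing arcs. The delicate point is to make the unrolling/covering correspondence precise at the upper boundary---where the single puncture corresponds to the discrete family of vertices $0^{(k)}$---and to see that self-folded triangles and loops of $\Pi$, which wrap once around the puncture, unroll into honest arcs $i^{(k)}i^{(k+1)}$ spanning one period; one may instead verify non-crossing directly from the linear order on the lower boundary together with the crossing criterion for the three types of arc pairs.

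Finally I would establish maximality and periodicity together. Fix a bridging arc $0j_0\in\Pi$ (one exists in every triangulation of $S_n^1$); its lifts $0^{(k)}j_0^{(k)}$ lie in $\Phi(\Pi)$ for all $k$, and the region $\mathcal{P}$ bounded by two consecutive such lifts, $0^{(k)}j_0^{(k)}$ and $0^{(k+1)}j_0^{(k+1)}$, together with the intervening boundary segments, has the two upper vertices $0^{(k)},0^{(k+1)}$ and the $n\!+\!1$ lower vertices from $j_0^{(k)}$ to $j_0^{(k+1)}$, hence is an $(n\!+\!3)$-gon; since $T$ maps one bounding arc to the other, its translates tile $\mathcal{U}_n$, so $\mathcal{P}$ is a fundamental domain and $\Phi(\Pi)$ is $n$-periodic, matching the picture of Remark~\ref{remfundamentaldomainstrip}. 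For maximality I would use that $\Pi$ cuts $S_n^1$ into $n$ triangles; because $\Phi(\Pi)=p^{-1}(\Pi)$, the complementary regions of $\Phi(\Pi)$ are exactly the lifts of these triangles, each simply connected and hence triangular, so no arc can be added to $\Phi(\Pi)$ without crossing one already present. This shows $\Phi(\Pi)$ is a maximal collection of non-crossing arcs, i.e.\ a triangulation, completing the proof.
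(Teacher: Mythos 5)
Your proof is correct, but it takes a genuinely different route from the paper's. The paper proves this proposition by induction on $n$: by Lemma~\ref{lemspecialmarkedpoint} a triangulation $\Pi\ne\Pi_\ast$ of $S_{n+1}^1$ has a special marked point $x$; cutting the special triangle gives $\Pi_{\setminus x}$ on $S_n^1$ (Corollary~\ref{corgluecuttriangle}); the inductive hypothesis supplies an $(n\!+\!3)$-gon fundamental domain $\mathcal{P}'$ for $\Phi(\Pi_{\setminus x})$; and re-inserting the lifted special triangle at $x^{(0)}$ enlarges $\mathcal{P}'$ to an $(n\!+\!4)$-gon fundamental domain for $\Phi(\Pi)$ --- so non-crossing and maximality are inherited along the induction rather than checked directly. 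You instead give a one-shot structural argument: $\Phi(\Pi)=p^{-1}(\Pi)$ for the unrolling projection $p$, non-crossing follows by lifting (interior crossings of lifts of distinct arcs would project to crossings in $\Pi$, and distinct lifts of one simple arc are disjoint away from endpoints by unique lifting), maximality holds because the complementary regions are lifts of the $n$ triangles of $\Pi$, and periodicity comes from the $(n\!+\!3)$-gon between consecutive lifts of a bridging arc, exactly matching Remark~\ref{remfundamentaldomainstrip}. What the paper's induction buys is that it stays inside the combinatorial toolkit already built (special points, cutting and gluing) and runs in lockstep with the other inductive proofs (Theorems~\ref{thmtriangulationfrieze}, \ref{thmmatchings}, etc.); what your approach buys is conceptual transparency and economy: it exhibits $\Phi$ as a translation-invariant preimage and essentially delivers Theorem~\ref{thmbijection} for free, since $\Psi$ becomes projection of a fundamental domain. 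Two spots in your write-up need tightening to be fully rigorous, both of which you partially flag yourself: $p$ is not a covering at the puncture (one vertex $0$ versus the discrete family $0^{(k)}$), so the lifting argument must be run on the disc minus its marked points with representatives chosen to realize the non-crossing of $\Pi$, loops $ii$ unrolling to $i^{(k)}i^{(k+1)}$ as you note; and \enquote{simply connected and hence triangular} is too quick for regions meeting the upper boundary --- the lift of a triangle whose corner at the puncture is crossed by the cut is a quadrilateral with one side on the upper boundary, and maximality there additionally requires conditions (A2) and (A3) of Definition~\ref{defsarc} to exclude further arcs, which your framework supports but should say explicitly.
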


\begin{proof}
Clearly, if $\Pi=\Pi_\ast$ is the star-triangulation of $S_n^1$, then the claim is true. We proceed by induction on $n$. If $n=1$, there is only the star-triangulation. Now we assume the claim holds for $n\geq 1$. Let $\Pi$ be a triangulation of $S_{n+1}^1$ other than $\Pi_\ast$ with  bridging arc $0i$, $i\in\{1,2,\dots,n+1\}$, in $\Pi$. By Lemma~\ref{lemspecialmarkedpoint}, $\Pi$ contains a special marked point $x\in\{1,2,\dots,n+1\}\setminus\{i\}$. Then, by Corollary~\ref{corgluecuttriangle} and induction, $\Pi_{\setminus x}$ is a triangulation of $S_n^1$ providing an $n$-periodic triangulation $\mathcal{T}_n=\Phi(\Pi_{\setminus x})$ of $\mathcal{U}_n$. Recall, if $i\in\{1,2,\dots,x-1\}$, we set $i_x=i$ and for $i\in\{x+1,\dots, n+1\}$ we choose $i_x=i-1$. Then by construction, $\mathcal{T}_{n}$ contains $0^{(k)}{i_x}^{(k)}$ for every $k\in\mathbb{Z}$. We choose the fundamental domain for $\mathcal{T}_{n}$ given by the \mbox{$(n\!+\!3)$-gon} $\mathcal{P}'$ with vertices $0^{(0)},0^{(1)},{i_x}^{(1)}$, $({i_x}-1)^{(1)},\dots,1^{(1)},n^{(0)},$\linebreak ${(n-1)}^{(0)},\dots,{i_x}^{(0)}$ containing $n\!-\!1$ pairwise non-crossing arcs. Clearly, since $x$ is a special marked point in $\Pi$ it follows, for every $k$, that there is no arc in $\mathcal{T}_{n+1}=\Phi(\Pi)$ having $x^{(k)}$ as an endpoint. Then the union $\mathcal{P}$ of all triangles in $\mathcal{P}'$ together with the special triangle with vertices $x-1^{(0)},x^{(0)},x+1^{(0)}$ is an \mbox{$(n\!+\!4)$-gon} containing $n$ non-crossing arcs. Thus $\mathcal{P}$ provides a fundamental domain for $\mathcal{T}_{n+1}$ and it follows that $\mathcal{T}_{n+1}$ is an $(n+1)$-periodic triangulation of $\mathcal{U}_{n+1}$.
\end{proof}

Note that triangulations of $S_n^1$ with the same shape provide the same $n$-periodic triangulation of $\mathcal{U}_n$. Note also that if $x$ is a special marked point in $\Pi$, then $x^{(k)}$ is a special vertex in $\Phi(\Pi)$ for every integer $k$. Hence we have the following corollary.

\begin{cor}\label{corspecialvertex}
Let $\mathcal{T}_n$ be an $n$-periodic triangulation of $\mathcal{U}_n$ other than the star-triangulation with quiddity sequence $q_{\mathcal{T}_n}=(a_1,a_2,\dots,a_n)$. Then $a_x=1$ for some $x\in\{1,2,\dots,n\}$ and $x^{(k)}$ is a special vertex with respect to $\mathcal{T}_n$.
\end{cor}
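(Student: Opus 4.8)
The plan is to read the statement off directly from the observation recorded in the remark preceding the corollary, namely that $\Phi$ carries special marked points to special vertices, combined with Lemma~\ref{lemspecialmarkedpoint}. First I would note that the triangulation $\mathcal{T}_n$ under consideration is of the form $\Phi(\Pi)$ for a triangulation $\Pi$ of $S_n^1$: cutting $\mathcal{T}_n$ along a fundamental domain $\mathcal{P}$ (an $(n\!+\!3)$-gon, by Remark~\ref{remfundamentaldomainstrip}), collapsing the upper boundary segment of $\mathcal{P}$ to a single vertex (the puncture) and identifying the two boundary copies of the repeated lower vertex recovers such a $\Pi$ with $\Phi(\Pi)=\mathcal{T}_n$. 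Moreover the two quiddity sequences agree up to cyclic equivalence, so it suffices to exhibit a special marked point of $\Pi$.

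Second, since $\Phi$ sends the star-triangulation $\Pi_\ast$ of $S_n^1$ to the star-triangulation $\mathcal{T}_\ast$ of $\mathcal{U}_n$ and we are assuming $\mathcal{T}_n\neq\mathcal{T}_\ast$, the preimage satisfies $\Pi\neq\Pi_\ast$. By Lemma~\ref{lemspecialmarkedpoint}, every triangulation of $S_n^1$ other than $\Pi_\ast$ has at least one special marked point $x\in\{1,2,\dots,n\}$, that is $a_x=1$. By the remark, $x^{(k)}$ is then a special vertex of $\Phi(\Pi)=\mathcal{T}_n$ for every integer $k$, and since $q_{\mathcal{T}_n}=q_\Pi$ up to cyclic equivalence we obtain $a_x=1$ in $q_{\mathcal{T}_n}$, as desired.

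The hard part is the very first step, which is really the surjectivity half of the forthcoming bijection (Theorem~\ref{thmbijection}); if that is not yet available here, I would instead argue directly inside a fundamental domain, so as not to create a circular dependency. Since $\mathcal{T}_n\neq\mathcal{T}_\ast$ it must contain a peripheral arc, and among all peripheral arcs I would choose one, say $p$, enclosing the fewest lower vertices strictly between its endpoints. An ear/diagonal argument shows this minimum is exactly one: the triangle of $\mathcal{T}_n$ having $p$ as a side has its third vertex on the lower boundary, and if more than one lower vertex were enclosed by $p$ this would yield a peripheral arc enclosing strictly fewer vertices, contradicting minimality. Hence $p$ cuts off a single triangle whose unique interior vertex $v$ is trapped below $p$; every triangle incident to $v$ lies in this region, so $v$ is incident to exactly one triangle and is therefore special, and periodicity gives the conclusion for all copies $x^{(k)}$. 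The only point requiring genuine care in either route is handling the four-sided region of $\mathcal{P}$ correctly under the convention that the whole upper boundary is viewed as a single vertex.
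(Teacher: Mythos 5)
Your proposal is correct, and it is in fact stronger than what the paper itself offers. The paper gives no formal proof: it derives the corollary from the two notes preceding it, namely that a special marked point $x$ of $\Pi$ yields special vertices $x^{(k)}$ of $\Phi(\Pi)$, together with Lemma~\ref{lemspecialmarkedpoint} -- exactly your first route. You are right that this route silently uses surjectivity of $\Phi$, i.e.\ that every $n$-periodic triangulation of $\mathcal{U}_n$ arises as $\Phi(\Pi)$; in the paper this is only justified afterwards, by the construction of $\Psi$ in the next subsection and the identity $\Phi\circ\Psi=\mathrm{id}$ of Theorem~\ref{thmbijection} (and the matching of quiddity sequences you invoke is Lemma~\ref{lemequivquid}, also stated later). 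Spotting this forward dependency and supplying a self-contained fix is the genuinely valuable part of your write-up. Your fallback argument is sound: since $\mathcal{T}_n\ne\mathcal{T}_\ast$ there is a peripheral arc (a triangulation with only bridging arcs is, by maximality, the star-triangulation); condition (A2) guarantees each peripheral arc strictly encloses at least one lower vertex, so a minimal one $p$ exists; the triangle on the lower side of $p$ has its third vertex among the enclosed lower vertices (the region below $p$ is bounded by $p$ and lower boundary segments only, so the upper boundary cannot contribute), and minimality forces the enclosed count to be exactly $1$, since otherwise one of the two remaining sides of that triangle would be a peripheral arc of $\mathcal{T}_n$ enclosing strictly fewer vertices. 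The single enclosed vertex then meets exactly the one triangle cut off by $p$, hence is special, and $n$-periodicity transports this to all translates $x^{(k)}$. What each approach buys: the paper's route is a one-line consequence once the bijection machinery is in place, while yours is elementary, local to the strip, and legitimately usable at the point where the corollary is stated; your worry about the four-sided region of $\mathcal{P}$ is moot in the direct argument, since everything takes place strictly below the chosen peripheral arc.
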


%%%%%%%%%
\FloatBarrier
\subsection{From periodic triangulations of strips to triangulations of once-punctured discs}\label{secPhi}
%%%%%%%%%

Let $\mathcal{T}_n$ be an $n$-periodic triangulation of $\mathcal{U}_n$ with fundamental domain $\mathcal{P}$. We associate a triangulation $\Pi=\Psi(\mathcal{T}_n)$ of $S_n^1$ to $\mathcal{T}_n$ as follows: there is natural projection $\pi$ from the set $A(\mathcal{U}_n)$ of arcs in $\mathcal{U}_n$ to the set  $A(S_n^1)$ of arcs in $S_n^1$ given by
\begin{eqnarray*}
\pi\colon A(\mathcal{U}_n)& \to &A(S_n^1)\\
i^{(k)}j^{(l)}&\mapsto &ij
\end{eqnarray*}
(with $l\in \{k,k+1\}$ cf.\ Lemma~\ref{lemarcinfundamentaldomain}). We define the set $\Pi$ of arcs in $S_n^1$ to be the image of $\mathcal{P}$ under $\pi$, i.e.\
$$\Psi(\mathcal{T}_n)=\bigcup_{i^{(k)}j^{(l)}\in\mathcal{T}_n\cap\mathcal{P}} \pi\left(i^{(k)}j^{(l)}\right).$$

One can easily convince oneself that the definition of $\Psi$ does not depend on the choice of a fundamental domain. Note that $\pi$ sends bridging arcs to bridging arcs. Thus by Remark~\ref{remfundamentaldomainstrip} $\Psi(\mathcal{T}_n)$ contains at least one bridging arc. Note also that a special vertex in $\mathcal{T}_n$ maps to a special marked point in $\Psi(\mathcal{T}_n)$.

Clearly, the star-triangulation of $\mathcal{U}_n$ leads to the star-triangulation of $S_n^1$. Another example for $n=5$, Figure~\ref{figextriangulationstrip5p1} shows the periodic triangulation of the strip whose image under $\Psi$ is the triangulation of the once-punctured disc given in Figure~\ref{figexquidditysequencedisc}. The proof of the next result is straightforward.

\begin{prop}
Let $\mathcal{T}_n$ be a $n$-periodic triangulation of $\mathcal{U}_n$. Then $\Pi=\Psi(\mathcal{T}_n)$ is a triangulation of $S_n^1$.
\end{prop}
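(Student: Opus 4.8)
The plan is to realize the once-punctured disc $S_n^1$ as a geometric quotient of the fundamental domain $\mathcal{P}$ and to transport the triangulating arcs of $\mathcal{P}$ across that quotient. Since $\Psi$ does not depend on the choice of fundamental domain, by Remark~\ref{remfundamentaldomainstrip} I would take $\mathcal{P}$ to be the $(n\!+\!3)$-gon with upper vertices $0^{(k)},0^{(k+1)}$, lower vertices $i^{(k)},(i+1)^{(k)},\dots,n^{(k)},1^{(k+1)},\dots,i^{(k+1)}$, whose two slanted sides are the bridging arcs $0^{(k)}i^{(k)}$ and $0^{(k+1)}i^{(k+1)}$ and whose interior carries the $n-1$ pairwise non-crossing arcs of $\mathcal{T}_n\cap\mathcal{P}$ cutting $\mathcal{P}$ into $n$ cells. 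First I would describe explicitly the quotient map that glues the left slanted side $0^{(k)}i^{(k)}$ to the right slanted side $0^{(k+1)}i^{(k+1)}$ via the period-one translation (identifying $0^{(k)}\!\sim\!0^{(k+1)}$ and $i^{(k)}\!\sim\!i^{(k+1)}$) and then contracts the resulting top loop $0^{(k)}0^{(k+1)}$ to a single interior point. This collapses the $n+1$ lower vertices to the $n$ marked points $1,\dots,n$ on $\partial S_n^1$, sends the identified top vertex to the puncture $0$, and is a homeomorphism of $\mathcal{P}/\!\sim$ onto $S_n^1$; composed with the inclusion $\mathcal{T}_n\cap\mathcal{P}\hookrightarrow\mathcal{P}$ it is exactly $\pi$ restricted to $\mathcal{P}$, so $\Psi(\mathcal{T}_n)$ is precisely the image of the triangulating arcs of $\mathcal{P}$.

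Next I would carry out the count. By the structure of $\mathcal{P}$, the set $\mathcal{T}_n\cap\mathcal{P}$ consists of the two slanted bridging arcs together with the $n-1$ interior arcs, hence $n+1$ arcs in all. The only pair of $n$-translates inside $\mathcal{P}$ is the pair of slanted sides (this is exactly the ``small overlap'' of fundamental domains noted after the definition of periodicity), so by Lemma~\ref{lemarcinfundamentaldomain} all other arcs have pairwise distinct images under $\pi$, while the two slanted arcs both map to the single bridging arc $0i$. Therefore $\Psi(\mathcal{T}_n)$ consists of exactly $n$ arcs and contains at least the bridging arc $0i$.

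Then I would verify that these $n$ arcs are pairwise non-crossing and conclude maximality. Since the quotient is a homeomorphism on the interior of $\mathcal{P}$ and the arcs of $\mathcal{T}_n\cap\mathcal{P}$ have no interior intersections, their images can only meet along the seam created by gluing the two slanted sides; but the gluing matches $0^{(k)}i^{(k)}$ to $0^{(k+1)}i^{(k+1)}$ endpoint to endpoint, so the cell decomposition of $\mathcal{P}$ descends to a genuine subdivision of $S_n^1$ and no new crossings appear. Finally, because every triangulation of $S_n^1$ has exactly $n$ arcs, a family of $n$ pairwise non-crossing arcs is automatically a maximal such collection, so $\Psi(\mathcal{T}_n)$ is a triangulation of $S_n^1$.

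I expect the main obstacle to be the non-crossing verification at the seam: one must be sure that contracting the top edge and gluing the two bridging sides cannot force two arcs that were non-crossing in $\mathcal{P}$ to cross in $S_n^1$. Here the ``single upper vertex'' philosophy of Remark~\ref{remfundamentaldomainstrip}---which legitimately reinterprets the distinguished quadrilateral of $\mathcal{P}$ as a triangle---is what makes the contraction of $0^{(k)}0^{(k+1)}$ admissible, and the endpoint-to-endpoint matching of the slanted sides is what makes the gluing clean. An alternative I would keep in reserve, closely paralleling the proof for $\Phi$, is induction on $n$: for $\mathcal{T}_{n+1}\neq\mathcal{T}_\ast$ remove the special triangle at a special vertex $x^{(k)}$ (which exists by Corollary~\ref{corspecialvertex}) to obtain an $n$-periodic triangulation, apply the inductive hypothesis, and glue the corresponding special triangle back in $S_n^1$ using Corollary~\ref{corgluecuttriangle}.
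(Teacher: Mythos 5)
Your proposal is correct, but there is nothing in the paper to compare it against line by line: the paper simply declares that \enquote{the proof of the next result is straightforward} and omits it entirely. What you supply is a genuine proof, and a rather more structural one than the inductive style the paper uses elsewhere. Your main route --- realizing $S_n^1$ as the quotient of the fundamental domain $\mathcal{P}$ obtained by gluing the two slanted bridging sides via the period translation and contracting the top segment to the puncture --- is exactly the geometric content behind the paper's \enquote{single upper vertex} philosophy in Remark~\ref{remfundamentaldomainstrip}, and it cleanly handles all three requirements at once: the count (the $n+1$ arcs of $\mathcal{T}_n\cap\mathcal{P}$ map to exactly $n$ arcs, since by Lemma~\ref{lemarcinfundamentaldomain} and the vertex set of $\mathcal{P}$ the only translate pair inside $\mathcal{P}$ is the pair of slanted sides), the non-crossing property (the quotient is injective on the open polygon and the seam is matched endpoint to endpoint, so the cell decomposition descends), and maximality (since the marked surface $S_n^1$ has finitely many arcs and every triangulation has exactly $n$ of them, $n$ pairwise non-crossing, pairwise distinct arcs are automatically maximal --- and your injectivity count is precisely what guarantees the distinctness of isotopy classes that this step silently requires, given that an arc of $S_n^1$ is determined by its label $ij$). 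One point worth making explicit if you write this up: you should check that the quotient map sends the isotopy class of a peripheral arc $i^{(k)}j^{(l)}$ in the strip to the class the paper denotes $ij$, i.e.\ the arc isotopic to the clockwise boundary path; this holds because in the strip (a disc with all marked points on the boundary) the class of an arc is determined by its endpoints, so it descends to the clockwise class, which is what identifies your quotient map with $\pi$. Your reserve plan --- induction on $n$ via a special vertex $x^{(k)}$, cutting with Corollary~\ref{corgluecutstrip} and regluing with Corollary~\ref{corgluecuttriangle} --- would reproduce the shape of the paper's proof of the companion statement for $\Phi$, so either route is faithful to the paper's toolkit; the quotient argument buys a direct, non-inductive verification, while the induction buys uniformity with the surrounding proofs.
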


%%%%%%%%%
\subsection{Bijection between triangulations of once-punctured discs and periodic triangulations of strips}
%%%%%%%%%

In the sequel $\Phi$ and $\Psi$ denote the maps defined respectively in Subsections \ref{secPsi} and \ref{secPhi}.

\begin{thm}\label{thmbijection}
The maps $\Phi$ and $\Psi$ are inverse bijections between triangulations of $S_n^1$ and \mbox{$n$-periodic} triangulations of $\mathcal{U}_n$.
\end{thm}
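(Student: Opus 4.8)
The plan is to show that $\Phi$ and $\Psi$ are mutually inverse, from which the bijection statement follows immediately. Both maps have already been shown to be well defined (each sends a genuine triangulation to a genuine triangulation), so the only remaining content is the two-sided inverse property: $\Psi\circ\Phi=\mathrm{id}$ on triangulations of $S_n^1$ and $\Phi\circ\Psi=\mathrm{id}$ on $n$-periodic triangulations of $\mathcal{U}_n$. Since $\Phi$ and $\Psi$ are defined arc-by-arc through the embeddings $\iota_k$ and the projection $\pi$, the natural strategy is to verify that these elementary maps are inverse to one another at the level of individual arcs, and then lift this to triangulations.

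First I would treat $\Psi\circ\Phi=\mathrm{id}$. Let $\Pi$ be a triangulation of $S_n^1$. By definition $\Phi(\Pi)=\bigcup_{k\in\mathbb{Z}}\iota_k(\Pi)$, and applying $\Psi$ means choosing a fundamental domain $\mathcal{P}$ for $\Phi(\Pi)$ and projecting the arcs of $\Phi(\Pi)$ lying in $\mathcal{P}$ back via $\pi$. The key observation is that $\pi\circ\iota_k=\mathrm{id}_{A(S_n^1)}$ for every $k$: indeed, for $i<j$ we have $\pi(\iota_k(ij))=\pi(i^{(k)}j^{(k)})=ij$, and for $i\ge j$ we have $\pi(\iota_k(ij))=\pi(i^{(k)}j^{(k+1)})=ij$. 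Because $\Psi$ is independent of the choice of fundamental domain (as noted in the text), I may choose $\mathcal{P}$ so that its arcs are exactly $\iota_k(\Pi)$ for a single appropriate $k$ (up to the boundary bridging arcs, which project to the same bridging arcs); then projecting recovers precisely the arcs of $\Pi$. One should check that every arc of $\Pi$ is hit exactly once and that no spurious arcs appear, using that $\Phi(\Pi)$ consists precisely of the $n$-translates of the arcs of $\Pi$ and that a fundamental domain meets each family of $n$-translates in exactly one representative.

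Next I would treat $\Phi\circ\Psi=\mathrm{id}$. Let $\mathcal{T}_n$ be an $n$-periodic triangulation of $\mathcal{U}_n$ with fundamental domain $\mathcal{P}$. Then $\Psi(\mathcal{T}_n)=\pi(\mathcal{T}_n\cap\mathcal{P})$, and $\Phi(\Psi(\mathcal{T}_n))=\bigcup_k\iota_k(\pi(\mathcal{T}_n\cap\mathcal{P}))$. Here the crucial point is the reverse identity: for an arc $\gamma=i^{(k)}j^{(l)}$ actually occurring in a periodic triangulation, Lemma~\ref{lemarcinfundamentaldomain} guarantees $l\in\{k,k+1\}$ with $l=k$ exactly when $i<j$ and $l=k+1$ exactly when $i\ge j$. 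This is precisely the case distinction in the definition of $\iota_k$, so $\iota_{k}(\pi(i^{(k)}j^{(l)}))=i^{(k)}j^{(l)}$; that is, $\iota_k\circ\pi$ restricted to arcs with superscript $k$ (in the sense of the left endpoint) is the identity. Taking the union over all $k$ then reconstructs all $n$-translates of each arc of $\mathcal{T}_n\cap\mathcal{P}$, and by $n$-periodicity these translates are exactly the arcs of $\mathcal{T}_n$. Thus $\Phi(\Psi(\mathcal{T}_n))=\mathcal{T}_n$.

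I expect the main obstacle to be purely bookkeeping rather than conceptual: namely, making the correspondence between $n$-translate families and fundamental-domain representatives fully rigorous, and handling the boundary bridging arcs of $\mathcal{P}$ correctly (the quadrilateral/degenerate-side subtlety flagged in Remark~\ref{remfundamentaldomainstrip}, where a single arc at $0^{(k)}$ is shared between adjacent copies of $\mathcal{P}$). The cleanest way to sidestep index juggling is to prove once and for all the two pointwise identities $\pi\circ\iota_k=\mathrm{id}$ and, on the image of $\pi$ restricted to arcs of a periodic triangulation, $\iota_k\circ\pi=\mathrm{id}$ (the latter relying essentially on Lemma~\ref{lemarcinfundamentaldomain}), and then argue that both $\Phi$ and $\Psi$ respect the passage between ``full periodic object'' and ``fundamental domain,'' so the pointwise inverses assemble into global inverses.
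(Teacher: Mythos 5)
Your proposal is correct and takes essentially the same route as the paper: the paper likewise verifies both composites arc-by-arc, using $\pi\circ\iota_k=\mathrm{id}$ for $\Psi\circ\Phi$ and Lemma~\ref{lemarcinfundamentaldomain} to get $\iota_k\circ\pi=\mathrm{id}$ on arcs actually occurring in a periodic triangulation, then lifts to whole triangulations via $n$-translate families and periodicity. One small inaccuracy in your first half: a fundamental domain $\mathcal{P}$ for $\Phi(\Pi)$ need not contain $\iota_k(\Pi)$ for a single $k$ (its arcs are translates coming from both $\iota_k$ and $\iota_{k+1}$), but since all $n$-translates of an arc have the same image under $\pi$ and $\Psi$ is independent of the choice of fundamental domain, your hedged translate-family argument---which is in effect what the paper uses---closes this without further work.
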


\begin{proof}
We have to proof that $\Phi\circ\Psi$ and $\Psi\circ\Phi$ are the the identity on $\mathcal{U}_n$ and on $S_n^1$, respectively.
\begin{inparaenum}[(i)] 
\item Let $\mathcal{T}_n$ be an $n$-periodic triangulation of $\mathcal{U}_n$. We write $\Pi=\Psi(\mathcal{T}_n)$, $\mathcal{T}_n'=\Phi(\Pi)$ and show that $\mathcal{T}_n=\mathcal{T}'_n$. First, we choose an arbitrary arc $\gamma=i^{(k)}j^{(l)}$ in $\mathcal{T}_n$. We have $\pi(\gamma)=ij\in\Pi$. By Lemma~\ref{lemarcinfundamentaldomain}, we know that $\gamma=i^{(k)}j^{(k)}$ with $i<j$, or $\gamma=i^{(k)}j^{(k+1)}$ with $i\geq j$. If $i<j$, then $\iota_k(ij)=i^{(k)}j^{(k)}\in \mathcal{T}'_n$, otherwise, if $i\geq j$, then $\iota_k(ij)=i^{(k)}j^{(k+1)}\in \mathcal{T}'_n$. In both cases $\iota_k(ij)=\iota_k(\pi(\gamma))=\gamma \in \mathcal{T}'_n$. Next, we show that $\mathcal{T}_n'$ is contained in $\mathcal{T}_n$. Let $\gamma'={i}^{(k')}{j}^{(l')}$ be an arbitrary arc in $\mathcal{T}'_n$. Either $i<j$, in which case $\gamma'=i^{(k')}j^{(k')}$, or $i\geq j$, in which case $\gamma'=i^{(k')}j^{(k'+1)}$ (Lemma~\ref{lemarcinfundamentaldomain}). Since $\mathcal{T}'_n$ is the image of $\Pi$, we have $\gamma'={i}^{(k')}{j}^{(l')}\in \iota_{k'}(\Pi)$, thus $ij\in\Pi$. Since $\Pi=\Psi(\mathcal{T}_n)$ there exists $\gamma\in \mathcal{T}_n$ with $\pi(\gamma)=ij$. Clearly, $\gamma=i^{(k)}j^{(l)}$ for some $k,l\in\mathbb{Z}$, where again either $\gamma=i^{(k)}j^{(k)}\in \mathcal{T}_n$ with $i<j$, or $\gamma=i^{(k)}j^{(k+1)}\in \mathcal{T}_n$ with $i\geq j$. Hence $\gamma'$ is an $n$-translate of $\gamma$, so $\gamma'\in\mathcal{T}_n$.

\item We consider a triangulation $\Pi$ of $S_n^1$ and write $\mathcal{T}_n=\Phi(\Pi)$, $\Pi'=\Psi(\mathcal{T}_n)$. Let $\gamma=ij$ be an arc in $\Pi$. For some $k\in\mathbb{Z}$ we have either $\iota_k(\gamma)=i^{(k)}j^{(k)}\in\mathcal{T}_n$ if $i<j$, or $\iota_k(\gamma)=i^{(k)}j^{(k+1)}\in\mathcal{T}_n$ otherwise. In both cases we get $\pi(\iota_k(\gamma))=ij\in \Pi'$, hence $\gamma\in \Pi'$. Since this is true for all $n$ non-crossing arcs in $\Pi$ we have $\Pi'=\Pi$ as desired. This establishes the bijection.
 \end{inparaenum}
\end{proof}

Before moving on to matching numbers, let us summarize some immediate facts about periodic triangulations of strips implied by the bijection of Theorem~\ref{thmbijection} and our previous work on triangulations of once-punctured discs. Especially, how periodic triangulations of strips are linked to arithmetic friezes.

\begin{lem}\label{lemequivquid}
Let $\mathcal{T}_n$ be an $n$-periodic triangulation of $\mathcal{U}_n$ with quiddity sequence $q_{\mathcal{T}_n}$ and let  \mbox{$\Pi=\Psi(\mathcal{T}_n)$} be the associated triangulation of $S_n^1$ with quiddity sequence $q_\Pi$. Then $q_{\mathcal{T}_n}$ equals $q_\Pi$. In particular, $\mathcal{T}_n$ provides an $n$-arithmetic frieze $\mathcal{F}_{\mathcal{T}_n}$ that is equal to $\mathcal{F}_{\Pi}$.
\end{lem}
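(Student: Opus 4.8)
The plan is to reduce everything to the single identity $q_{\mathcal{T}_n}=q_\Pi$, since the \enquote{in particular} clause then follows formally. Recall that by definition the $i$-th entry of $q_\Pi$ counts the triangles of $\Pi$ with a corner at the marked point $i$, whereas the $i$-th entry of $q_{\mathcal{T}_n}$ counts the triangles of $\mathcal{T}_n$ incident with the lower-boundary vertex $i^{(k)}$ (a number independent of $k$). So it suffices to produce, for each $i\in\{1,\dots,n\}$ and each fixed $k$, a bijection between the triangles of $\Pi$ incident with $i$ and the triangles of $\mathcal{T}_n$ incident with $i^{(k)}$.

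First I would set $\Pi=\Psi(\mathcal{T}_n)$ and invoke Theorem~\ref{thmbijection} to write $\mathcal{T}_n=\Phi(\Pi)=\bigcup_{k}\iota_k(\Pi)$, so that the arcs of $\mathcal{T}_n$ are explicitly the $n$-translates of the images $\iota_k(\gamma)$ of arcs $\gamma$ of $\Pi$. The key observation is that $\pi$, and equivalently each $\iota_k$, preserves incidence at the vertex: an arc of $\Pi$ through $i$ corresponds to exactly one arc of $\mathcal{T}_n$ through $i^{(k)}$, by the explicit description of the family of $n$-translates recorded just after Lemma~\ref{lemarcinfundamentaldomain}. Here one handles the three types of arc at $i$ separately — the bridging arc $0i$, a peripheral arc with $i$ as its left endpoint, and a peripheral arc with $i$ as its right endpoint — and notes that in the \enquote{wrap-around} case it is $\iota_{k-1}$ rather than $\iota_k$ that supplies the translate meeting $i^{(k)}$; nonetheless each arc of $\Pi$ at $i$ yields precisely one arc of $\mathcal{T}_n$ at $i^{(k)}$.

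Next I would upgrade this bijection of arcs to a bijection of triangle-corners. A corner at a boundary vertex is cut out by two consecutive arcs in the angular (left-to-right) order around the vertex, the two bounding boundary segments playing the role of the extreme arcs; condition (A2) excludes exactly the immediate boundary segments on both sides, so the two conventions agree. Since the clockwise order of the endpoints $i,i+1,\dots$ around the disc matches the left-to-right order $i^{(k)},(i+1)^{(k)},\dots$ along the strip, the arc bijection preserves this order, hence sends consecutive arcs to consecutive arcs and therefore triangle-corners at $i$ bijectively to triangle-corners at $i^{(k)}$. This shows that the $i$-th entries of $q_{\mathcal{T}_n}$ and $q_\Pi$ agree for every $i$, so $q_{\mathcal{T}_n}=q_\Pi$. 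The main obstacle is precisely this order-preservation bookkeeping across the puncture: one must check that the bridging arcs, which run up to the upper boundary (the \enquote{opened} puncture), and the wrap-around peripheral arcs are slotted into the correct position in the fan at $i^{(k)}$. Once the $n$-translate description is in hand this becomes routine.

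Finally, for the \enquote{in particular} clause: $\mathcal{F}_{\mathcal{T}_n}$ is by definition the infinite frieze with quiddity sequence $q_{\mathcal{T}_n}$, which exists and is $n$-periodic by Theorem~\ref{thmtriangulationfrieze} applied to $\Pi$. Since an infinite frieze is determined by its quiddity sequence (Remark following Definition~\ref{defquidseq}) and $q_{\mathcal{T}_n}=q_\Pi$, we conclude $\mathcal{F}_{\mathcal{T}_n}=\mathcal{F}_\Pi$. As $\mathcal{F}_\Pi$ is $n$-arithmetic by Proposition~\ref{proptriangulationfriezearithmetic}, so is $\mathcal{F}_{\mathcal{T}_n}$, completing the argument.
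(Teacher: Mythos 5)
Your proof is correct and follows essentially the same route the paper intends: the paper states this lemma without proof as an immediate consequence of the bijection of Theorem~\ref{thmbijection}, and your argument is precisely the natural unpacking of that (the arc-incidence correspondence with the wrap-around case via $\iota_{k-1}$, the corner count at each vertex, and the reduction of the \enquote{in particular} clause to Theorem~\ref{thmtriangulationfrieze}, the determination of a frieze by its quiddity row, and Proposition~\ref{proptriangulationfriezearithmetic}). One small refinement: a loop at $i$ has $i$ as both its left and its right endpoint, so your correspondence is really a bijection of arc-\emph{ends} rather than of arcs (the loop meets $i^{(k)}$ through two distinct translates, $i^{(k)}i^{(k+1)}$ and $i^{(k-1)}i^{(k)}$), which your corner-counting framework accommodates but your phrasing \enquote{exactly one arc} does not literally cover.
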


Exactly as for triangulations of once-punctured discs we can use the operations of cutting and gluing triangles for fundamental domains of periodic triangulations of strips. Here, we only allow adding and removing triangles at the lower boundary. 

\begin{cor}\label{corgluecutstrip}
Let $\mathcal{T}_n$ be an $n$-periodic triangulation of $\mathcal{U}_n$ with quiddity sequence\linebreak $q_{\mathcal{T}_n}=(a_1,a_2,\dots,a_n)$.

\begin{inparaenum}[$a)$] \label{corgluecutstripa}
\item \label{corgluecutstripa} Assume that $a_x=1$ for some $x\in\{1,2,\dots,n\}.$ Let $\mathcal{T}_{n\setminus x}$ denote the union of all triangles in $\mathcal{T}_n$ other than the special triangles at $x^{(k)}$, for all $k\in\mathbb{Z}$. Then $\mathcal{T}_{n\setminus x}$ is an $(n\!-\!1)$-periodic triangulation of $\mathcal{U}_{n-1}$ with quiddity sequence
$$q_{\mathcal{T}_{n\setminus x}}=\begin{cases}
(2)&\text{if } n=2,\\
(a_1,\dots,a_{x-2},a_{x-1}\!-\!1,a_{x+1}\!-\!1,a_{x+2},\dots,a_n)&\text{otherwise}.
\end{cases}$$

\item \label{corgluecutstripb} For some $i\in\{1,2,\dots,n\}$ insert a vertex $x^{(k)}$ between $i^{(k)}$ and $(i+1)^{(k)}$ together with the arc $i^{(k)}(i+1)^{(k)}$  for every $k\in\mathbb{Z}$. Let $\mathcal{T}_{n\cup x}$ denote the union of all triangles in $\mathcal{T}_n$ together with the triangles having vertices $i^{(k)},x^{(k)}$ and $(i+1)^{(k)}$, for all $k\in\mathbb{Z}$. Then $\mathcal{T}_{n\cup x}$ is an $(n\!+\!1)$-periodic triangulation of $\mathcal{U}_{n+1}$ with quiddity sequence
$$q_{\mathcal{T}_{n\cup x}}=\begin{cases}
(4,1)&\text{if } n=1,\\
(a_{1^{(0)}},\dots,a_{i-1},a_i\!+\!1,1,a_{i+1}\!+\!1,a_{i+2},\dots,a_n)&\text{otherwise}.
\end{cases}$$
\end{inparaenum}
\end{cor}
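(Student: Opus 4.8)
The plan is to deduce the corollary from the disc-side result Corollary~\ref{corgluecuttriangle} by transporting it through the bijection of Theorem~\ref{thmbijection}, rather than re-running any periodic frieze induction. The whole content reduces to the assertion that cutting and gluing of triangles commutes with $\Phi$ (equivalently with $\Psi$); once this is established, the quiddity-sequence formulas are read off essentially for free via Lemma~\ref{lemequivquid}.

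First, for part $a)$, I would set $\Pi = \Psi(\mathcal{T}_n)$, so that $\mathcal{T}_n = \Phi(\Pi)$ by Theorem~\ref{thmbijection}. By Lemma~\ref{lemequivquid} we have $q_{\mathcal{T}_n} = q_\Pi$, hence the hypothesis $a_x = 1$ says precisely that $x$ is a special marked point of $\Pi$, and $\Pi_{\setminus x}$ is a triangulation of $S_{n-1}^1$ by Corollary~\ref{corgluecuttriangle}$a)$. The key geometric observation is that the special triangle $\tau_x$ at $x$ has its two boundary segments at $x$ together with a single non-boundary side, namely the arc $(x-1)(x+1)$ (a peripheral arc for $n\ge 3$, a loop in the degenerate case $n=2$). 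Under the embeddings $\iota_k$ this arc is sent exactly to the far sides of the special triangles at the vertices $x^{(k)}$, $k\in\mathbb{Z}$; thus deleting $\tau_x$ from $\Pi$ and then applying $\Phi$ removes precisely the same family of arcs as deleting all special triangles at the $x^{(k)}$ from $\mathcal{T}_n = \Phi(\Pi)$. This yields the identity $\mathcal{T}_{n\setminus x} = \Phi(\Pi_{\setminus x})$, after the identification of $\mathcal{U}_{n-1}$ with the strip obtained by suppressing the columns $x^{(k)}$; the relabeling here is exactly the convention $i_x = i$ for $i < x$ and $i_x = i-1$ for $i > x$, matching the renumbering of the marked points on the disc. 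Since $\Phi$ of a triangulation of $S_{n-1}^1$ is an $(n-1)$-periodic triangulation of $\mathcal{U}_{n-1}$, it follows that $\mathcal{T}_{n\setminus x}$ is such a triangulation, and applying Lemma~\ref{lemequivquid} to $\mathcal{T}_{n\setminus x}$ and $\Psi(\mathcal{T}_{n\setminus x}) = \Pi_{\setminus x}$ together with Corollary~\ref{corgluecuttriangle}$a)$ gives the stated quiddity sequence, including the value $(2)$ when $n=2$.

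For part $b)$ I would argue symmetrically, with gluing in place of cutting. Setting again $\Pi = \Psi(\mathcal{T}_n)$, adding the single triangle at the new marked point $x$ between $i$ and $i+1$ on the disc produces the triangulation $\Pi_{\cup x}$ of $S_{n+1}^1$ from Corollary~\ref{corgluecuttriangle}$b)$. The inserted triangle has its lone non-boundary side $i(i+1)$, which $\iota_k$ sends to the family of inserted arcs $i^{(k)}(i+1)^{(k)}$; hence $\Phi(\Pi_{\cup x}) = \mathcal{T}_{n\cup x}$, now after the inverse relabeling that creates the new columns $x^{(k)}$. As before, $\Phi(\Pi_{\cup x})$ is $(n+1)$-periodic over $\mathcal{U}_{n+1}$, and its quiddity sequence is forced by Lemma~\ref{lemequivquid} and Corollary~\ref{corgluecuttriangle}$b)$, with the value $(4,1)$ when $n=1$.

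The routine but only genuine difficulty is the superscript and index bookkeeping in the two commutativity identities $\mathcal{T}_{n\setminus x} = \Phi(\Pi_{\setminus x})$ and $\mathcal{T}_{n\cup x} = \Phi(\Pi_{\cup x})$: one must check that the wrap-around conventions ($x-1$ and $x+1$ read modulo $n$) and the superscript shift $k\mapsto k+1$ built into $\iota_k$ are compatible with the column-deletion and column-insertion identifications of $\mathcal{U}_n$ with $\mathcal{U}_{n\mp 1}$. This is a direct verification from the definition of $\iota_k$ and Lemma~\ref{lemarcinfundamentaldomain}, requiring no new ideas beyond carefully tracking how an arc straddling the boundary between two consecutive periods is relabeled.
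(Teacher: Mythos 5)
Your proposal is correct and takes essentially the same route as the paper: the paper states this corollary without a separate proof, explicitly framing it as one of the \enquote{immediate facts about periodic triangulations of strips implied by the bijection of Theorem~\ref{thmbijection} and our previous work on triangulations of once-punctured discs}, i.e.\ precisely the transport through $\Phi$ and $\Psi$ of Corollary~\ref{corgluecuttriangle} combined with Lemma~\ref{lemequivquid} that you carry out. Your explicit verification of the commutativity identities $\mathcal{T}_{n\setminus x}=\Phi(\Pi_{\setminus x})$ and $\mathcal{T}_{n\cup x}=\Phi(\Pi_{\cup x})$ (including the loop $(x-1)(x+1)$ in the degenerate case $n=2$ and the superscript bookkeeping in $\iota_k$) just fills in the checks the paper leaves implicit.
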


\begin{rem}
By Corollary~\ref{corgluecutstrip} $\ref{corgluecutstripb})$, starting with a fundamental domain for the star-triangulation of $\mathcal{U}_n$ and iteratively inserting triangles provides all $m$-periodic triangulations of $\mathcal{U}_m$ ($m\ge n$) with $n$ bridging arcs at a vertex on the upper boundary.
\end{rem}

Of course, we also have the following result.

\begin{cor}\label{corspecialvertexfriezeequivalce}
Given an $(n\!+\!1)$-periodic triangulation $\mathcal{T}_{n+1}$ of $\mathcal{U}_{n+1}$ with special vertex $x^{(k)}$. Let $\mathcal{F}_{\mathcal{T}_{n+1}}$ be the $(n\!+\!1)$-arithmetic frieze associated to $\mathcal{T}_{n+1}$ and let $\mathcal{F}_{\mathcal{T}_{n+1\setminus x}}$ be the $n$-arithmetic frieze associated to $\mathcal{T}_{n+1\setminus x}$. Then $\mathcal{F}_{\mathcal{T}_{n+1\setminus x}}$ is obtained from $\mathcal{F}_{\mathcal{T}_{n+1}}$ by $(n\!+\!1)$-cutting above $a_{x}$, and vice versa $\mathcal{F}_{\mathcal{T}_{n+1}}$ is obtained from $\mathcal{F}_{\mathcal{T}_{n+1\setminus x}}$ by $n$-gluing above $(a_{x-1}-1,a_{x+1}-1)$.
\end{cor}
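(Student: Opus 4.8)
The plan is to transport Corollary~\ref{corspecialmpfriezeequivalce} across the bijection of Theorem~\ref{thmbijection}, exactly as the other corollaries of this subsection have been deduced from their disc counterparts. First I set $\Pi:=\Psi(\mathcal{T}_{n+1})$, a triangulation of $S_{n+1}^1$. By Lemma~\ref{lemequivquid} the two associated arithmetic friezes coincide, $\mathcal{F}_{\mathcal{T}_{n+1}}=\mathcal{F}_{\Pi}$, and the quiddity sequences agree; write $q_{\mathcal{T}_{n+1}}=q_{\Pi}=(a_1,\dots,a_{n+1})$. Since $x^{(k)}$ is a special vertex of $\mathcal{T}_{n+1}$, the projection $\pi$ sends it to a special marked point $x$ of $\Pi$ (as observed after the construction of $\Psi$), so $a_x=1$ and in particular $\Pi\ne\Pi_\ast$. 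Thus $\Pi$ meets the hypotheses of Corollary~\ref{corspecialmpfriezeequivalce}.

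The only point with actual content is to identify the two reductions at the level of friezes, namely $\mathcal{F}_{\mathcal{T}_{n+1\setminus x}}=\mathcal{F}_{\Pi_{\setminus x}}$. For this I would apply Lemma~\ref{lemequivquid} once more, now to the $n$-periodic triangulation $\mathcal{T}_{n+1\setminus x}$ of $\mathcal{U}_n$, obtaining $\mathcal{F}_{\mathcal{T}_{n+1\setminus x}}=\mathcal{F}_{\Psi(\mathcal{T}_{n+1\setminus x})}$ together with $q_{\mathcal{T}_{n+1\setminus x}}=q_{\Psi(\mathcal{T}_{n+1\setminus x})}$. Then I would compare quiddity sequences: Corollary~\ref{corgluecutstrip}$(a)$ computes $q_{\mathcal{T}_{n+1\setminus x}}=(a_1,\dots,a_{x-2},a_{x-1}-1,a_{x+1}-1,a_{x+2},\dots,a_{n+1})$, whereas Corollary~\ref{corgluecuttriangle}$(a)$ gives precisely the same tuple for $q_{\Pi_{\setminus x}}$. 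Since a periodic infinite frieze is determined by its quiddity sequence, the equality of these tuples forces $\mathcal{F}_{\Psi(\mathcal{T}_{n+1\setminus x})}=\mathcal{F}_{\Pi_{\setminus x}}$, and hence $\mathcal{F}_{\mathcal{T}_{n+1\setminus x}}=\mathcal{F}_{\Pi_{\setminus x}}$.

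With the two identifications $\mathcal{F}_{\mathcal{T}_{n+1}}=\mathcal{F}_{\Pi}$ and $\mathcal{F}_{\mathcal{T}_{n+1\setminus x}}=\mathcal{F}_{\Pi_{\setminus x}}$ in place, the statement follows by direct substitution into Corollary~\ref{corspecialmpfriezeequivalce}: that result asserts that $\mathcal{F}_{\Pi_{\setminus x}}$ is obtained from $\mathcal{F}_{\Pi}$ by $(n\!+\!1)$-cutting above $a_x=1$, and conversely that $\mathcal{F}_{\Pi}$ is obtained from $\mathcal{F}_{\Pi_{\setminus x}}$ by $n$-gluing above $(a_{x-1}-1,a_{x+1}-1)$; rewriting $\mathcal{F}_{\Pi}$ and $\mathcal{F}_{\Pi_{\setminus x}}$ as $\mathcal{F}_{\mathcal{T}_{n+1}}$ and $\mathcal{F}_{\mathcal{T}_{n+1\setminus x}}$ yields both assertions. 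I expect the main care to be needed not in the logic but in the bookkeeping of the middle paragraph, i.e.\ in matching the strip removal $\mathcal{T}_{n+1}\mapsto\mathcal{T}_{n+1\setminus x}$ with the disc removal $\Pi\mapsto\Pi_{\setminus x}$, and in checking the degenerate value $n=1$, where Corollary~\ref{corgluecutstrip}$(a)$ returns the exceptional tuple $(2)$; there I would verify the claim against the base case of Corollary~\ref{corspecialmpfriezeequivalce} directly rather than through the generic formula.
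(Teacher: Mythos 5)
Your proposal is correct and takes essentially the same approach as the paper: the paper states this corollary without proof (``Of course, we also have the following result''), treating it as an immediate consequence of Theorem~\ref{thmbijection}, Lemma~\ref{lemequivquid}, Corollary~\ref{corgluecutstrip} and the disc-level Corollary~\ref{corspecialmpfriezeequivalce}, and your transport argument simply makes that implicit reasoning explicit. Your middle-paragraph identification $\mathcal{F}_{\mathcal{T}_{n+1\setminus x}}=\mathcal{F}_{\Pi_{\setminus x}}$ via matching quiddity sequences (friezes being determined by their quiddity rows), together with the separate check of the degenerate case $n=1$, is exactly the bookkeeping the paper leaves to the reader.
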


%%%%%%%%%
\subsection{Matching numbers}
%%%%%%%%%

Given a triangulation of a once-punctured disc we will consider the associated periodic triangulation of the corresponding strip. We are interested in the ways to allocate triangles in a periodic triangulation of a strip to sets of consecutive vertices on the lower boundary. 

\begin{defin}
Let $\mathcal{T}_n$ be an $n$-periodic triangulation of $\mathcal{U}_n$ and let $I$ be a set of $s\geq 1$ consecutive vertices $v_1,v_2\dots,v_s$ on the lower boundary of $\mathcal{U}_n$ ($v_1\le v_s$). A \emph{matching} between $I$ and $\mathcal{T}_n$ is an $s$-tuple $(\tau_1,\tau_2, \dots,\tau_s)$ of pairwise distinct triangles in $\mathcal{T}_n$ such that $\tau_i$ is incident with $v_i$. $\mathcal{M}_{v_1v_s}=\mathcal{M}_{v_1v_s}(\mathcal{T}_n)$ is the \emph{set of all matchings} between $I$ and $\mathcal{T}_n$.
\end{defin}

\begin{rem}\label{remmatchingsinglevertex}
Clearly, the number of matchings between a set $I=\{v\}$ containing one single vertex $v$ and an $n$-periodic triangulation $\mathcal{T}_n$ of $\mathcal{U}_n$ equals the number of triangles incident with $p$. In other words, if $q_{\mathcal{T}_n}=(a_1,a_2,\dots,a_n)$ is the quiddity sequence of $\mathcal{T}_n$, it follows, by definition, that $\abs{\mathcal{M}_{i^{(k)}i^{(k)}}}=a_i$. In particular, $\abs{\mathcal{M}_{i^{(k)}i^{(k)}}}=1$ if $i^{(k)}$ is special. Moreover,
for two vertices $i^{(k)}\leq j^{(l)}$ on the lower boundary of $\mathcal{U}_n$, we have $\abs{\mathcal{M}_{i^{(k)}j^{(l)}}}=\abs{\mathcal{M}_{i^{(k+m)}j^{(l+m)}}}$ for all $m\in\mathbb{Z}$.
\end{rem}

Considering two periodic triangulations of strips where one is obtained from the other by gluing triangles as in Corollary~\ref{corgluecutstrip}~\ref{corgluecutstripb}), we would like to know how the matching numbers for the two periodic triangulations are related to each other. The following lemma provides the answer.

Such as in Corollary~\ref{cornglueperiodicfrieze}, we shall use the following notation. For $i,x\in\mathbb{Z}$, we set $i_x=i-t$ whenever $x+(t-1)(n+1)< i\le x+t(n+1)$.

\begin{lem}\label{lemmatchingsnglue}
Let $\mathcal{T}_{n+1}$ be an $(n\!+\!1)$-periodic triangulation of $\mathcal{U}_{n+1}$ with quiddity sequence\linebreak $q_{\mathcal{T}_{n+1}}=(a_1,a_2,\dots,a_{n+1})$ and assume that $a_x=1$ for some $x\in\{1,2,\dots,n+1\}$. Let $\mathcal{T}_{n}=\mathcal{T}_{n+1\setminus x }$. Then for two vertices $i^{(k)}< j^{(l)}$ on the lower boundary of $\mathcal{U}_{n+1}$
$$\abs{\mathcal{M}_{i^{(k)}j^{(l)}}(\mathcal{T}_{n+1})}=\begin{cases}
\abs{\mathcal{M}_{i_{x+1}^{(k)}j_{x-1}^{(l)}}(\mathcal{T}_n)} & i\not\equiv x+1,j\not\equiv x-1,\\[0.25cm]
\abs{\mathcal{M}_{i_{x+1}^{(k)}j_{x-1}^{(l)}}(\mathcal{T}_n)} +\abs{\mathcal{M}_{i_{x+1}^{(k)}(j_{x-1}-1)^{(l)}}(\mathcal{T}_n)}& i\not \equiv x+1,j\equiv x-1,\\[0.25cm]
 \abs{\mathcal{M}_{(i_{x+1}-1)^{(k)}j_{x-1}^{(l)}}(\mathcal{T}_n)}+\abs{\mathcal{M}_{i_{x+1}^{(k)}j_{x-1}^{(l)}}(\mathcal{T}_n)} &i\equiv x+1,j\not\equiv x-1,\\[0.25cm]
\abs{\mathcal{M}_{(i_{x+1}-1)^{(k)}j_{x-1}^{(l)}}(\mathcal{T}_n)} +\abs{\mathcal{M}_{(i_{x+1}-1)^{(k)}{(j_{x-1}-1)}^{(l)}}(\mathcal{T}_n)}\\[0.25cm]
+\abs{\mathcal{M}_{i_{x+1}^{(k)}j_{x-1}^{(l)}}(\mathcal{T}_n)} +\abs{\mathcal{M}_{i_{x+1}^{(k)}{(j_{x-1}-1)}^{(l)}}(\mathcal{T}_n)} & i\equiv x+1,j\equiv x-1,
\end{cases}$$
where $\equiv$ stands for equality modulo $n+1$.
\end{lem}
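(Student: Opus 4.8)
The plan is to prove the identity by a direct combinatorial analysis of matchings, reading off the four cases from the behaviour of the two endpoints of the interval $I=[i^{(k)},j^{(l)}]$. First I would fix the local picture around the removed vertices. Since $a_x=1$, each special vertex $x^{(k')}$ is incident in $\mathcal{T}_{n+1}$ to a unique triangle $\tau^{(k')}$, the \emph{ear} with vertices $(x-1)^{(k')},x^{(k')},(x+1)^{(k')}$; passing to $\mathcal{T}_{n}=\mathcal{T}_{n+1\setminus x}$ deletes precisely these ears (Corollary~\ref{corgluecutstrip}~\ref{corgluecutstripa})), leaving the base arc $(x-1)^{(k')}(x+1)^{(k')}$ on the boundary and making $(x-1)^{(k')}$ and $(x+1)^{(k')}$ adjacent. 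This yields a bijection between the triangles of $\mathcal{T}_{n+1}$ other than the ears and the triangles of $\mathcal{T}_{n}$, preserving incidences to every non-special vertex. The one triangle $\sigma^{(k')}$ lying across the base of $\tau^{(k')}$ is incident to both $(x-1)^{(k')}$ and $(x+1)^{(k')}$ in \emph{both} triangulations, so the only genuine constraint it imposes --- that it be allotted to at most one of these two vertices --- is identical on each side.

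The central observation is that in any matching a special vertex is forced to be matched to its ear, the unique triangle incident to it (Remark~\ref{remmatchingsinglevertex}). Hence every special vertex lying strictly inside $I$ uses up its ear, and its two neighbours are thereby left with exactly the triangle sets they carry in $\mathcal{T}_{n}$; restricting a matching to the non-special vertices therefore gives, for interior special vertices, a clean bijection with matchings of the corresponding interval in $\mathcal{T}_{n}$. An ear matters beyond this only when it is adjacent to an endpoint but lies \emph{outside} $I$, and this happens exactly at the left endpoint when $i\equiv x+1$ (the ear at $(i-1)^{(k)}=x^{(k)}$) and at the right endpoint when $j\equiv x-1$ (the ear at $(j+1)^{(l)}=x^{(l)}$). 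Such a \emph{free ear} is incident to only one vertex of $I$, namely that endpoint, and so contributes an extra triangle option there that $\mathcal{T}_{n}$ does not provide.

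The four cases then arise from the two independent binary choices ``does the relevant endpoint use its free ear''. I would split $\abs{\mathcal{M}_{i^{(k)}j^{(l)}}(\mathcal{T}_{n+1})}$ accordingly: in each branch where an endpoint uses its free ear, that endpoint is removed from the remaining matching problem, and the ear, touching no other vertex of $I$, imposes no further constraint, so the branch is counted by a matching number of $\mathcal{T}_{n}$ on a suitably truncated interval; the branch where neither free ear is used is the generic term. The left and right free ears are distinct triangles (their special vertices differ, since $i^{(k)}<j^{(l)}$), so the two choices are independent, producing one, two, two and four summands in the respective cases. Finally I would translate each branch into the stated form using Remark~\ref{remmatchingsinglevertex} (translation invariance) to fix $k,l$, and the reindexing $i\mapsto i_{x+1}$, $j\mapsto j_{x-1}$ to pass from $\mathcal{U}_{n+1}$- to $\mathcal{U}_{n}$-vertices, the $\pm1$ shifts recording whether a free ear was used.

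The main obstacle I expect is precisely this last step of bookkeeping. The reindexing functions $i_{x+1}$ and $j_{x-1}$ are deliberately \emph{not} the naive delete-and-relabel bijection --- near $x$ they identify two consecutive vertices --- so one must check that they, together with the $\pm1$ shifts, exactly encode the four combinatorial branches rather than an off-by-one variant. This is the same delicate index manipulation that appears in Corollary~\ref{cornglueperiodicfrieze} (with $k=x-1$), whose four-case shape the present statement mirrors term by term; reconciling the two conventions carefully, rather than the underlying combinatorics of forced matches and free ears, is where the real work lies.
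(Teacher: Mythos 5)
Your argument is correct, and it supplies precisely the case-by-case combinatorial study that the paper declares ``tedious but straightforward'' and omits: special vertices are forced onto their ears, a free ear is available only at an endpoint (on the left iff $i\equiv x+1$, on the right iff $j\equiv x-1$), and the independence of the two endpoint choices yields exactly the one/two/two/four summands of the four cases. Your reading of the index convention is also the right one: at $i\equiv x+1$ the value $i_{x+1}$ is the $\mathcal{U}_n$-label of the truncated interval (the ``ear used'' branch) while $i_{x+1}-1$ is the honest relabelling of the endpoint (the ``ear unused'' branch), in agreement with the four-case shape of Corollary~\ref{cornglueperiodicfrieze}.
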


The proof of Lemma~\ref{lemmatchingsnglue} is a tedious but straightforward case-by-case study. We thus omit it. Note that for $x=n+1$, we actually get $x+1=n+2$, but $1\le i\le n+1$. Similarly, if $x=1$, in this case we have $x-1=0$, but $1\le j\le n+1$. This is why we consider the equality up to congruence classes reduced modulo $n+1$. 

\begin{thm}\label{thmmatchings}
Let $\mathcal{T}_n$ be an $n$-periodic triangulation of $\mathcal{U}_n$ with associated $n$-arithmetic frieze\linebreak $\mathcal{F}_{\mathcal{T}_n}=(m_{ij})_{j-i\ge -2}$. Then for $i\le j$
$$m_{ij}=\abs{\mathcal{M}_{i^{(k)}j^{(k)}}}.$$
\end{thm}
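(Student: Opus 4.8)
The plan is to argue by induction on $n$, combining the recursion for frieze entries under $n$-gluing (Corollary~\ref{cornglueperiodicfrieze}) with the recursion for matching numbers under the corresponding gluing of triangles (Lemma~\ref{lemmatchingsnglue}). Two preliminary observations make the statement well-posed and reduce the work: by Remark~\ref{remmatchingsinglevertex} the quantity $\abs{\mathcal{M}_{i^{(k)}j^{(k)}}}$ is independent of $k$, and the same remark settles the diagonal case $i=j$, since $m_{ii}=a_i=\abs{\mathcal{M}_{i^{(k)}i^{(k)}}}$. Hence it suffices to treat $i<j$, where $i^{(k)}$ and $j^{(k)}$ span $j-i+1$ consecutive lower vertices under the convention $(i+ln)^{(k)}=i^{(k+l)}$.

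For the base case, and more generally for every star-triangulation $\mathcal{T}_\ast$ of $\mathcal{U}_n$, I would compute $\abs{\mathcal{M}_{i^{(k)}j^{(k)}}}$ directly. Since the quiddity sequence is $(2,\dots,2)$, every lower vertex is incident with exactly its two neighbouring triangles in the fan, so a matching of the $s=j-i+1$ consecutive vertices is an injective choice, vertex by vertex, of a left or right adjacent triangle. A short induction on $s$ shows there are exactly $s+1=j-i+2$ such matchings, which is precisely the entry $m_{ij}$ of the basic frieze $\mathcal{F}_\ast$ (cf.\ Figure~\ref{figbasicfrieze}); as $\mathcal{F}_{\mathcal{T}_\ast}=\mathcal{F}_\ast$, this settles all star cases and in particular $n=1$.

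For the inductive step, take an $(n\!+\!1)$-periodic triangulation $\mathcal{T}_{n+1}\ne\mathcal{T}_\ast$ and, by Corollary~\ref{corspecialvertex}, choose a special vertex $x^{(k)}$, so $a_x=1$. Put $\mathcal{T}_n=\mathcal{T}_{n+1\setminus x}$; by Corollary~\ref{corgluecutstrip} and Corollary~\ref{corspecialvertexfriezeequivalce}, $\mathcal{F}_{\mathcal{T}_{n+1}}$ arises from $\mathcal{F}_{\mathcal{T}_n}$ by $n$-gluing above $(a_{x-1}\!-\!1,a_{x+1}\!-\!1)$, i.e.\ the gluing position is $k=x-1$ in the notation of Corollary~\ref{cornglueperiodicfrieze}, whence $k+2=x+1$ and the relevant subscripts are $i_{x+1}$ and $j_{x-1}$. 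For $i<j$ I would expand $\hat m_{ij}$ through the four cases of Corollary~\ref{cornglueperiodicfrieze} and expand $\abs{\mathcal{M}_{i^{(k)}j^{(k)}}(\mathcal{T}_{n+1})}$ through the four cases of Lemma~\ref{lemmatchingsnglue}. Because both statements use the identical subscript convention $i_x=i-t$ and were derived from the same gluing operation, the two expansions coincide term by term: the cases $i\not\equiv x+1,\ j\not\equiv x-1$; $i\not\equiv x+1,\ j\equiv x-1$; $i\equiv x+1,\ j\not\equiv x-1$; and $i\equiv x+1,\ j\equiv x-1$ contribute exactly one, two, two and four summands, with matching index pairs built from $(i_{x+1},j_{x-1})$, $(i_{x+1}-1,\,\cdot\,)$ and $(\,\cdot\,,j_{x-1}-1)$ on both sides. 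Applying the induction hypothesis to each summand, an entry of $\mathcal{F}_{\mathcal{T}_n}$ equal to the corresponding matching number in $\mathcal{T}_n$, yields $\hat m_{ij}=\abs{\mathcal{M}_{i^{(k)}j^{(k)}}(\mathcal{T}_{n+1})}$, completing the induction.

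I expect the main difficulty to be organisational rather than conceptual: one must align the two recursions exactly, checking that the reductions $i_{x+1},j_{x-1}$ produced by the gluing land in the range where the induction hypothesis applies, and handling the boundary congruence classes noted after Lemma~\ref{lemmatchingsnglue} (the cases $x=1$ and $x=n+1$, where $x-1$ or $x+1$ leaves $\{1,\dots,n+1\}$ and must be read modulo $n\!+\!1$). The genuinely combinatorial content sits in Lemma~\ref{lemmatchingsnglue} and in the base-case count; granting those, the theorem is exactly the statement that the four-case frieze recursion and the four-case matching recursion are the same recursion.
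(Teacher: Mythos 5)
Your proposal is correct and follows essentially the same route as the paper's own proof: the same direct count for star-triangulations as the base case (the paper uses the increment relation $\abs{\mathcal{M}_{i^{(k)}(j+1)^{(l)}}}=\abs{\mathcal{M}_{i^{(k)}j^{(l)}}}+1$ where you count monotone left/right choice sequences, both yielding $j-i+2$), followed by induction on $n$ via a special vertex, cutting, and the term-by-term alignment of the four cases of Corollary~\ref{cornglueperiodicfrieze} with those of Lemma~\ref{lemmatchingsnglue}. Nothing essential is missing.
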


\begin{proof}
W.l.o.g.\ we may take $k=0$. We first prove the claim for star-triangulations. Let $\mathcal{T}_n=\mathcal{T}_\ast$ be the star-triangulation of $\mathcal{U}_n$ with associated basic infinite frieze $\mathcal{F}_\ast=(m_{ij})_{j-i\ge -2}$, where $m_{ij}=j-i+2$, see Figures~\ref{figbstartriangulationstrip} and \ref{figbasicfrieze}. Clearly, $\abs{\mathcal{M}_{i^{(k)}i^{(k)}}}=2$ and for two vertices $i^{(k)}\leq j^{(l)}$ on the lower boundary of $\mathcal{U}_n$, we have $\abs{\mathcal{M}_{i^{(k)}(j+1)^{(l)}}}=\abs{\mathcal{M}_{i^{(k)}j^{(l)}}}+1$. Let $i \le j$, so $\abs{\mathcal{M}_{i^{(0)}j^{(0)}}}=j-i+2=m_{ij}$. Hence the result follows for $\mathcal{T}_\ast$, in particular, for $n=1$.

Now we use induction on $n$ to prove the claim for the remaining periodic triangulations. We assume that the result holds for any $n$-periodic triangulation of $\mathcal{U}_n$. We consider an $(n\!+\!1)$-periodic triangulation $\mathcal{T}_{n+1}$ of $\mathcal{U}_{n+1}$, $\mathcal{T}_{n+1}\ne\mathcal{T}_\ast$, with quiddity sequence $q_{\mathcal{T}_{n+1}}=(a_1,a_2,\dots,a_{n+1})$ and associated $(n\!+\!1)$-arithmetic frieze $\mathcal{F}_{\mathcal{T}_{n+1}}=(m_{ij})_{j-i\ge -2}$. For $i=j$, Remark~\ref{remmatchingsinglevertex} already gives the desired result, so we assume $i< j$.

Corollary~\ref{corspecialvertex} implies that $\mathcal{T}_{n+1}$ has special vertices, so there exists $x\in\{1,2,\dots n+1\}$ such that $a_x=1$, and $\mathcal{T}_{n}:=\mathcal{T}_{n+1\setminus x }$ is an $n$-periodic triangulation of $\mathcal{U}_{n}$ with quiddity sequence $q_{\mathcal{T}_n}=(\check a_1,\check a_2,\dots,\check a_n)$ as given in Corollary~\ref{corgluecutstrip}. We write $\mathcal{F}_{\mathcal{T}_n}=(\check m_{ij})_{j-i\ge -2}$ for the $n$-arithmetic frieze  associated to $\mathcal{T}_{n}$, and we obtain $\mathcal{F}_{\mathcal{T}_{n+1}}$ from $\mathcal{F}_{\mathcal{T}_n}$ by $n$-gluing above $(\check a_{x-1},\check a_x)$ (Corollaries \ref{corgluecutstrip} \ref{corgluecutstripa}) and \ref{corspecialvertexfriezeequivalce}).

It is enough to prove the claim for a fundamental domain for $\mathcal{F}_{\mathcal{T}_{n+1}}$, so we may consider $m_{ij}$ with $i\in\{1,2,\dots,n+1\}$ and $j\ge i+1$. We distinguish four separate cases and show for each case the result by making use of Corollary~\ref{cornglueperiodicfrieze} (for $k=x-1$), the inductive hypothesis, and Lemma~\ref{lemmatchingsnglue}. Let $i\not\equiv x+1$ and $j\not\equiv x-1$ (reduced modulo $n+1$). In this case we have $m_{ij}=\check m_{i_{x+1}j_{x-1}}=\abs{\mathcal{M}_{i_{x+1}^{(0)}j_{x-1}^{(0)}}(\mathcal{T}_n)}=\abs{\mathcal{M}_{i^{(0)}j^{(0)}}(\mathcal{T}_{n+1})}$. If $i\not \equiv x+1$ and $j\equiv x-1$,  we get $m_{ij}=\check m_{i_{x+1},j_{x-1}-1}+\check m_{i_{x+1}j_{x-1}} =\abs{\mathcal{M}_{i_{x+1}^{(0)}{(j_{x-1}-1)}^{(0)}}(\mathcal{T}_n)}+\abs{\mathcal{M}_{i_{x+1}^{(0)}j_{x-1}^{(0)}}(\mathcal{T}_n)}=\abs{\mathcal{M}_{i^{(0)}j^{(0)}}(\mathcal{T}_{n+1})}$. Suppose $i\equiv x+1$ and $j\not \equiv x-1$. It follows analogously that $m_{ij}=\check m_{i_{x+1}-1,j_{x-1}}+\check m_{i_{x+1},j_{x-1}}=\abs{\mathcal{M}_{(i_{x+1}-1)^{(0)}j_{x-1}^{(0)}}(\mathcal{T}_n)}+\abs{\mathcal{M}_{i_{x+1}^{(0)}j_{x-1}^{(0)}}(\mathcal{T}_n)}=\abs{\mathcal{M}_{i^{(0)}j^{(0)}}(\mathcal{T}_{n+1})}$. Finally, if $i\equiv x+1$ and $j\equiv x-1$, we have $m_{ij}=\check m_{i_{x+1}-1,j_{x-1}-1}+\check m_{i_{x+1}j_{x-1}}+\check m_{i_{x+1}-1,j_{x-1}}+\check m_{i_{x+1},j_{x-1}-1}=\abs{\mathcal{M}_{(i_{x+1}-1)^{(0)}{(j_{x-1}-1)}^{(0)}}(\mathcal{T}_n)}+\abs{\mathcal{M}_{i_{x+1}^{(0)}j_{x-1}^{(0)}}(\mathcal{T}_n)}+\abs{\mathcal{M}_{(i_{x+1}-1)^{(0)}j_{x-1}^{(0)}}(\mathcal{T}_n)}+\abs{\mathcal{M}_{i_{x+1}^{(0)}{(j_{x-1}-1)}^{(0)}}(\mathcal{T}_n)}=\abs{\mathcal{M}_{i^{(0)}j^{(0)}}(\mathcal{T}_{n+1})}.$ This completes the proof.
\end{proof}

To illustrate Theorem~\ref{thmmatchings} we give the following example.

\begin{ex}
We consider the $5$-periodic triangulation of $\mathcal{U}_5$ with associated triangulation of $S_5^1$ given in Figure~\ref{figexbijection}. Then the associated $5$-arithmetic frieze $\mathcal{F}_5=(m_{ij})_{j-i\ge -2}$ has quiddity sequence $q_{\mathcal{F}_5}=(4,3,3,1,2)$ and the fundamental domain $\mathcal{D}=(m_{ij})_{1\leq i\leq 5, j\ge i-2}$ looks as follows
\begin{center}
\scalebox{0.9}{\begin{tikzpicture}[font=\normalsize] 
  \matrix(m) [matrix of math nodes,row sep={1.5em,between origins},column sep={1.5em,between origins},nodes in empty cells]{
&&&&&&&&&&&&&&&&&&&\\
&0&&0&&0&&0&&0&&&&&&&&&&\\
&&1&&1&&1&&1&&1&&&&&&&&&\\
&&&4&&3&&3&&1&&2&&&&&&&&\\
&&&&11&&8&&2&&1&&7&&&&&&&\\
&&&&&29&&5&&1&&3&&19&&&&&&\\
&&&&&&18&&2&&2&&8&&50&&&&&\\
&&&&&&&7&&3&&5&&21&&31&&&&\\
&&&&&&&&10&&7&&13&&13&&12&&&\\
&&&&&&&&&23&&18&&8&&5&&17&&\\[+1mm]
&&&&&&&&&&&&\node[rotate=-6.5,shift={(-0.034cm,-0.08cm)}]  {\ddots};&&&&\node[rotate=-6.5,shift={(-0.034cm,-0.08cm)}]  {\ddots};&&&\\
};

\draw[opacity=0,rounded corners, fill=myblue,fill opacity=0.15] (m-1-1.south west) -- (m-10-10.south west) --($(m-10-20.south west)+(-0.25cm,0cm)$) -- ($(m-1-11.south west)+(-0.25cm,0cm)$)-- cycle;
  
\end{tikzpicture}}
\end{center}

Using matchings between sets of $s\ge 1$ consecutive vertices, starting at $i^{(0)}$, and $\mathcal{T}_5$ we obtain the matching numbers $\abs{\mathcal{M}_{i^{(0)}(i+s-1)^{(0)}]}}$ given in the table below. For fixed $i$, this provides exactly the non-trivial entries in the {\sc se}-diagonals of $\mathcal{F}_5$ whereas we get the non-trivial rows of $\mathcal{F}_5$ if $s$ is fixed.\vspace{0.25cm}

\begin{center}
\begin{tabular}{|l|*{7}{c|}}\hline
\diaghead(-1,1){\hskip1em }%
{$i$}{$s$}&\thead{$1$}&\thead{$2$}&\thead{$3$}&\thead{$4$}&\thead{$5$}&\thead{$6$}&\thead{$7$}\\  \hline
\thead{$1$}& $4$ & $11$& $29$ & $18$& $7$& $10$& $23$ \\    \hline
\thead{$2$}& $3$ & $8$& $5$ & $2$& $3$ & $7$& $18$ \\    \hline
\thead{$3$}& $3$ & $2$&$1$&$2$&$5$& $13$& $8$\\    \hline
\thead{$4$}& $1$ & $1$&$3$&$8$&$21$& $13$& $5$\\    \hline
\thead{$5$}& $2$ & $7$&$19$&$50$&$31$& $12$& $17$\\    \hline
\end{tabular}\vspace{0.25cm}
\end{center}
\end{ex}

%%%%%%%%%
%
\section{An alternative description}\label{seclabelingalgorithm}
%
%%%%%%%%%

In this section we give an alternative description of the entries in arithmetic friezes associated to triangulated once-punctured discs. This will provide, in a simple way, all diagonals of an arithmetic frieze. Conway and Coxeter in (32) of \cite{CCI} allocated non-negative numbers to the vertices of triangulated polygons and these numbers appear in the diagonals of the associated finite frieze. Adapting their strategy we assign numbers to the vertices of periodic triangulations of strips in an iterative way. Moreover, it turns out that this labeling algorithm can be used to determine the common differences of the arithmetic progressions that appear in an arithmetic frieze.

Let $\mathcal{T}_ n$ be an $n$-periodic triangulation of $\mathcal{U}_n$. We consider a fixed vertex $v$ on the lower boundary of $\mathcal{U}_n$ called \emph{starting vertex} and attach labels $n_v\!\left(w\right)\in\mathbb{Z}_{\geq 0}$ to every vertex $w$ of $\mathcal{U}_n$ as follows: we first set $n_{v}\!\left(v\right)=0$, and $n_v\!\left(w\right)=1$ whenever the vertex $w$ is joint with $v$ by a boundary segment or an arc in $\mathcal{T}_n$. Note that $w$ may lie on the upper boundary. As soon as a label is given for a vertex $0^{(k)}$ on the upper boundary of $\mathcal{U}_n$ we label all the remaining vertices on the upper boundary by the same number, i.e.\ we then set $n_v\!\left(0^{(k')}\right):=n_v\!\left(0^{(k)}\right)$ for all $k'\ne k.$

Once all neighbors (through arcs or boundary segments) of $v$ have obtained their label, we iteratively define the labels for the remaining vertices on the lower boundary. Whenever there is a triangle in $\mathcal{T}_n$ given by three vertices $w_1,w_2$ and $w_3$ on the lower boundary such that two of its vertices already have a label, e.g.\ $w_1$ and $w_2$, we take their sum for the label of the remaining vertex: $n_v\!\left(w_3\right):=n_v\!\left(w_1\right)+n_v\!\left(w_2\right).$

If there is no such a triangle in $\mathcal{T}_n$ left, we consider triangles in $\mathcal{T}_n$ of one of the following types
\begin{figure}[h]
\scalebox{0.9}{\begin{tikzpicture}[scale=1,font=\normalsize]

\node (a) at (2,1) [fill,circle,inner sep=1pt] {};
\node (b) at (6,1) [fill,circle,inner sep=1pt] {};
\node (c) at (7.5,1) [fill,circle,inner sep=1pt] {};

\draw (a) node [above] {$0^{(k)}$};
\draw (b) node [above] {$0^{(k)}$};
\draw (c) node [above] {$0^{(k+1)}$};

\draw (8,1) node [above,shift={(0.5 cm,0.1 cm)}] {$\dots$};
\draw (-0.5,1) node [above,shift={(-0.5 cm,0.1 cm)}] {$\dots$};

\draw (2.5,1) node [above,shift={(0.5 cm,0.1 cm)}] {$\dots$};
\draw (4.5,1) node [above,shift={(-0.5 cm,0.1 cm)}] {$\dots$};

\draw[semithick] (-0.5,1) to (2.5,1);
\draw[semithick] (4.5,1) to (8,1);

\foreach \x in {0,2,5,7} {
   \begin{scope}[shift={(\x cm, 0 cm)}]
    \node (\x) at (0,-1) [fill,circle,inner sep=1pt] {};
   \end{scope}
}
\draw (0) node [below] {$i^{(k)}$};
\draw (2) node [below] {$i'^{(k)}$};
\draw (5) node [below] {$i^{(k)}$};
\draw (7) node [below] {$i'^{(k+1)}$};

\draw (8,-1) node [below,shift={(0.5 cm,-0.2 cm)}] {$\cdots$};
\draw (-0.5,-1) node [below,shift={(-0.50 cm,-0.2 cm)}] {$\cdots$};
\draw (2.5,-1) node [below,shift={(0.5 cm,-0.2 cm)}] {$\cdots$};
\draw (4.5,-1) node [below,shift={(-0.50 cm,-0.2 cm)}] {$\cdots$};

\draw[semithick] (-0.5,-1) to (2.5,-1);
\draw[semithick] (4.5,-1) to (8,-1);

\draw[thin] (0) to (a);
\draw[thin] (2) to (a);
\draw[thin,out=45,in=135] (0) to (2);
\draw[thin] (5) to (b);
\draw[thin] (7) to (c);
\draw[thin,out=45,in=135] (5) to (7);
\end{tikzpicture}}
\end{figure}

\noindent where two (or three) vertices already have a label. For the triangle on the left we use the same rule as before, and take the sum of the two given labels for the remaining label. In the triangle on the right we consider $0^{(k)}$ and $0^{(k+1)}$ to be a single vertex with label $n_v\!\left(0^{(k)}\right)$ and also take the sum of the two given labels as the remaining label.

Note that either $0^{(k)}$ and $0^{(k+1)}$ already have the same label or they will get the same label now. As mentioned before once a vertex on the upper boundary has a label, all vertices on the upper boundary obtain the same label. 

Finally, we continue the labeling for the remaining vertices on the lower boundary as done before: if the labels are given for two vertices in a triangle in $\mathcal{T}_n$, we take their sum for the remaining one taking care that whenever a triangle is a four-sided region, we only take one of the labels on the upper boundary for the sum.

\begin{figure}[t]
\scalebox{0.9}{\begin{tikzpicture}[font=\normalsize]

\draw (12.5,1.25) node [above,shift={(0.5cm,0cm)}] {$\cdots$};
\draw (-1.5,1.25) node [above,shift={(-0.5 cm,0cm)}] {$\cdots$};
\draw[semithick] (-1.5,1.25) to (12.5,1.25);

\node (a) at (-1,1.25) [fill,myblue,circle,inner sep=1pt] {};
\node (b) at (4,1.25) [fill,myblue,circle,inner sep=1pt] {};
\node (c) at (9,1.25) [fill,circle,inner sep=1pt] {};

\draw (a) node [myblue,above] {$2$};
\draw (b) node [myblue,above] {$2$};
\draw (c) node [above] {$2$};

\foreach \x in {-1,0,1,2,3,4,5,6,7,8,9,10,11,12} {
   \begin{scope}[shift={(\x cm, 0 cm)}]
    \node (\x) at (0,-1) [fill,circle,inner sep=1pt] {};
   \end{scope}
}

\draw (12.5,-1) node [below,shift={(0.5cm,-0.2cm)}] {$\cdots$};
\draw (-1.5,-1) node [below,shift={(-0.5 cm,-0.2 cm)}] {$\cdots$};
\draw[semithick] (-1.5,-1) to (12.5,-1);

\draw (-1) node [myblue,below,shift={(0.25cm,0cm)}] {$5$};
\draw (0) node [myblue,below] {$1$};
\draw (1) node [myblue,below] {$1$};
\draw (2) node [myblue,below] {$\begin{matrix} n_{2^{(0)}}\! \left(2^{(0)}\right)\\[0.5mm]\,\veq\\[-1mm]0\end{matrix}$};
\draw (3) node [myblue,below] {$1$};
\draw (4) node [myblue,below] {$1$};
\draw (5) node [myblue,below] {$1$};
\draw (6) node [below] {$5$};
\draw (7) node [below] {$4$};
\draw (8) node [below] {$11$};
\draw (9) node [below] {$7$};
\draw (10) node [below] {$3$};
\draw (11) node [below] {$11$};
\draw (12) node [below,shift={(0.25cm,0cm)}] {$2^{(2)}$};

%fundamenal region
\draw[opacity=0, fill=myblue,fill opacity=0.15] (0,-1) -- (-1,1.25) -- (4,1.25) -- (5,-1) -- cycle;
\foreach \x in {0,1,2,3,4,5} {
   \begin{scope}[shift={(\x cm, 0 cm)}]
    \node (\x) at (0,-1) [myblue,fill,circle,inner sep=1pt] {};
   \end{scope}
}
\node (a) at (-1,1.25) [fill,myblue,circle,inner sep=1pt] {};
\node (b) at (4,1.25) [fill,myblue,circle,inner sep=1pt] {};

\draw (a) node [myblue,shift={(2.5cm,-0.5cm)}] {$\mathcal{P}$};

% arcs
\draw[myblue,thin] (0) to  (a);
\draw[myblue!,thin,out=42.5,in=135] (0) to  (5);
\draw[myblue,thin,out=30,in=150] (0) to  (2);
\draw[myblue,thin,out=35,in=145] (2) to (5);
\draw[myblue,thin,out=30,in=150] (2) to  (4);

\draw[myblue,thin] (5) to  (b);
\draw[thin,opacity=0.5,out=42.5,in=135] (5) to (10);
\draw[thin,opacity=0.5,out=30,in=150] (5) to  (7);
\draw[thin,opacity=0.5,out=35,in=145] (7) to (10);
\draw[thin,opacity=0.5,out=30,in=150] (7) to (9);

\draw[thin,opacity=0.5] (10) to (c);
\draw[thin,opacity=0.5,out=30,in=150] (10) to  (12);
\draw[thin,opacity=0.5,out=42.5,in=180] (10) to (12.5,0);
\draw[thin,opacity=0.5,out=35,in=200] (12) to (12.5,-0.72);
\draw[thin,opacity=0.5,out=30,in=195] (12) to (12.5,-0.77);

\draw[thin,opacity=0.5,out=-15,in=135] (-1.5,-0.1) to (0);
\draw[thin,opacity=0.5,out=0,in=150] (-1.5,-0.48) to (0);
\draw[thin,opacity=0.5,out=-15,in=150] (-1.5,-0.77) to (-1);

\node at (-0.5,-2) [opacity=0.5,align=left]  {$\Leftarrow$  };
\node at (-0.5,-2.5) [opacity=0.5,align=left]  {{\sc sw}-diagonal through $a_1=1$};
\node at (4.5,-2) [opacity=0.5,align=left]   {$\Rightarrow$};
\node at (4.5,-2.5) [opacity=0.5,align=left]   {{\sc se}-diagonal through $a_3=1$};

\end{tikzpicture}}
\caption{For starting vertex $2^{(0)}$, the labels attached to the $5$-periodic triangulation of $\mathcal{U}_5$ in Figure~\ref{figextriangulationstrip5p1} with associated $5$-arithmetic frieze given in Figure~\ref{figexfrieze5p2}.}\label{figexlabel}
\end{figure}
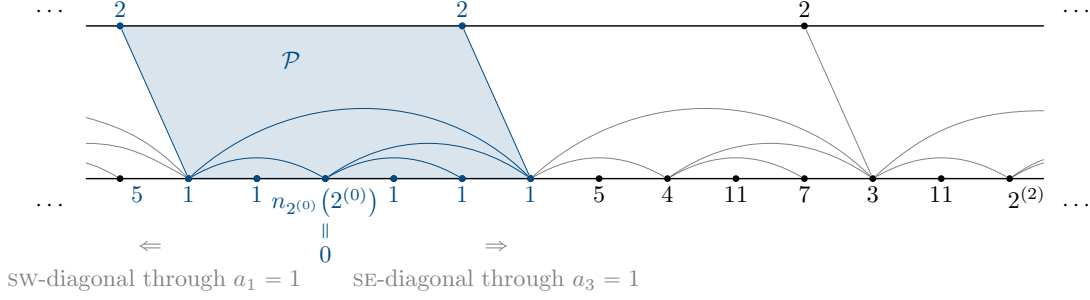

Figure~\ref{figexlabel} gives an example of this labeling for the $5$-periodic triangulation of $\mathcal{U}_5$ shown in Figure~\ref{figextriangulationstrip5p1}, with starting vertex $v=2^{(0)}$. The two sequences $(0,1,1,1,5,4,11,7,3,\cdots)$ and $(0,1,1,5,9,4,7,3,\cdots)$ are obtained for $w\geq 2^{(0)}$ and $w\leq 2^{(0)}$, respectively. Comparing these numbers with those in the associated $5$-arithmetic frieze, see Figure~\ref{figexfrieze5p2}, we observe that the first sequence is the {\sc se}-diagonal through $a_3=1$, and the second sequence gives the {\sc sw}-diagonal through $a_1=1$.
 
\begin{rem}\label{remlabelspecialvertex}
Clearly, if a special vertex $x^{(k)}$ with respect to $n$-periodic triangulation $\mathcal{T}_ n$ of $\mathcal{U}_n$ is not equal to the starting vertex $v$, we have
$$n_v\!\left(x^{(k)}\right)=n_v\!\left((x-1)^{(k)}\right)+n_v\!\left((x+1)^{(k)}\right).$$
 \end{rem}
 
The next lemma gives a useful relation if the starting vertex is special.
 
 \begin{lem}\label{lemlabelspecialvertex}
Let $\mathcal{T}_ n$ be an $n$-periodic triangulation of $\mathcal{U}_n$, $\mathcal{T}_ n\ne \mathcal{T}_ \ast$. Let $x^{(l)}$ be a special vertex with respect to $\mathcal{T}_ n$. Then for any vertex $i^{(k)}\ne x^{(l)}$ on the lower boundary of $\mathcal{U}_n$
$$n_{x^{(l)}}\!\left(i^{(k)}\right)=n_{(x-1)^{(l)}}\!\left(i^{(k)}\right)+n_{(x+1)^{(l)}}\!\left(i^{(k)}\right).$$
 \end{lem}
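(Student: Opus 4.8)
The plan is to reduce everything to the three-term (continuant) relation that the labels satisfy along the lower boundary. Writing $(x+m)^{(l)}$ for the lower-boundary vertex lying $m$ steps to the right of $x^{(l)}$ (with $(x+m)^{(l)}=(x+m-n)^{(l+1)}$ under the standing convention) and $a_{x+m}$ for its quiddity value, the key structural fact is that for \emph{any} starting vertex $v$ the labelling produced by the algorithm obeys
\[
n_v\!\left((x+m+1)^{(l)}\right)=a_{x+m}\,n_v\!\left((x+m)^{(l)}\right)-n_v\!\left((x+m-1)^{(l)}\right)
\]
at every middle vertex $(x+m)^{(l)}\neq v$, the sole defect being at the starting vertex itself (where $n_v(v)=0$ and its two neighbours carry the label $1$). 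Granting this, I would introduce the two functions $F\!\left(i^{(k)}\right):=n_{x^{(l)}}\!\left(i^{(k)}\right)$ and $G\!\left(i^{(k)}\right):=n_{(x-1)^{(l)}}\!\left(i^{(k)}\right)+n_{(x+1)^{(l)}}\!\left(i^{(k)}\right)$ and prove $F=G$ on the lower boundary, treating the vertices to the right and to the left of $x^{(l)}$ separately.

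For the initial data I would use that $x^{(l)}$ is special: the only triangle at $x^{(l)}$ is the special triangle on $(x-1)^{(l)},x^{(l)},(x+1)^{(l)}$, so $(x-1)^{(l)}$ and $(x+1)^{(l)}$ are joined by the special arc. Hence $n_{(x-1)^{(l)}}\!\left((x+1)^{(l)}\right)=n_{(x+1)^{(l)}}\!\left((x-1)^{(l)}\right)=1$ and $n_{(x\pm1)^{(l)}}\!\left(x^{(l)}\right)=1$, while $n_{x^{(l)}}\!\left((x\pm1)^{(l)}\right)=1$. This already gives $F=G=1$ at $(x\pm1)^{(l)}$. For the next vertex out I would compute, using the recurrence for the individual labellings (each valid since the relevant middle vertex differs from that labelling's own start),
\[
F\!\left((x+2)^{(l)}\right)=a_{x+1}\cdot1-0=a_{x+1},
\]
and $n_{(x-1)^{(l)}}\!\left((x+2)^{(l)}\right)=a_{x+1}\cdot1-1=a_{x+1}-1$ together with $n_{(x+1)^{(l)}}\!\left((x+2)^{(l)}\right)=1$, so that $G\!\left((x+2)^{(l)}\right)=a_{x+1}$ as well. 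The analogous computation at $(x-2)^{(l)}$ uses $a_{x-1}$.

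For the propagation I would observe that $F$ satisfies the recurrence at every middle vertex $\neq x^{(l)}$, while $G$ satisfies it at every middle vertex $\notin\{(x-1)^{(l)},(x+1)^{(l)}\}$; in particular both satisfy it at all middle vertices $(x+m)^{(l)}$ with $m\geq2$. Since the recurrence is second order, agreement of $F$ and $G$ at the two consecutive base points $(x+1)^{(l)}$ and $(x+2)^{(l)}$ then forces agreement at $(x+m)^{(l)}$ for all $m\geq1$ by induction on $m$; the need for \emph{both} base points, rather than deriving $(x+2)^{(l)}$ from $(x+1)^{(l)}$ and $x^{(l)}$, is exactly that $G$ fails the recurrence at $(x+1)^{(l)}$. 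The left-hand vertices are disposed of by the mirror-image argument.

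The main obstacle is the very first ingredient: justifying the continuant recurrence directly from the labelling algorithm. This is genuinely delicate, because the naive attempt to telescope the sum rule around the fan of triangles at a vertex breaks down, partly at the starting vertex and partly because all vertices on the upper boundary are forced to share a single label, so a fan meeting the upper boundary does not unfold into distinct summands. Making this precise is essentially the content tying the labelling to the diagonals of $\mathcal{F}_{\mathcal{T}_n}$, so I would either establish it as a preliminary lemma about the algorithm or, to stay self-contained, replace it by a direct propagation of the identity $F=G$ across triangles: away from the cluster $(x-1)^{(l)},x^{(l)},(x+1)^{(l)}$ the three starting vertices designate the same vertex of each triangle as the one reached last, so the sum rule carries the identity outward, while the finitely many triangles meeting the cluster are handled by the base-case computations above.
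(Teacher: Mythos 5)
Your overall architecture is sound: the labels $F=n_{x^{(l)}}$ and $G=n_{(x-1)^{(l)}}+n_{(x+1)^{(l)}}$ do satisfy the same second-order recurrence away from the respective starting vertices, your base-case computations at $(x\pm1)^{(l)}$ and $(x\pm2)^{(l)}$ are correct (the specialness of $x^{(l)}$ gives exactly the arc $(x-1)^{(l)}(x+1)^{(l)}$ you need), and agreement at two consecutive vertices then propagates to all of one side, with the mirror argument handling the other side; the periodic wrap-around causes no trouble since the translated special vertices $x^{(l')}$, $l'\ne l$, are not starting vertices of any of the three labelings. However, there is a genuine gap, and you have correctly located it yourself: the continuant recurrence
$n_v\bigl((x+m+1)^{(l)}\bigr)=a_{x+m}\,n_v\bigl((x+m)^{(l)}\bigr)-n_v\bigl((x+m-1)^{(l)}\bigr)$
is asserted, not proved. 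It cannot be imported from the identification of labels with frieze entries (Theorem~\ref{thmaltderscription}), because in the paper that theorem is proved \emph{using} the present lemma — citing it here would be circular. Nor does it follow formally from the triangle sum rule: as you note, the telescoping of the fan at a vertex breaks down where the fan contains a quadrilateral region meeting the upper boundary, since all vertices $0^{(k)}$ carry a single shared label. Proving the recurrence honestly on the strip is essentially of the same difficulty as the theorem this lemma serves, so as written the proposal is conditional on its hardest ingredient.

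Your fallback sketch (propagating $F=G$ triangle by triangle away from the cluster) is closer to a repair but still leaves two points unaddressed: first, the identity must also be verified at the upper boundary, i.e.\ $n_{x^{(l)}}\bigl(0^{(k)}\bigr)=n_{(x-1)^{(l)}}\bigl(0^{(k)}\bigr)+n_{(x+1)^{(l)}}\bigl(0^{(k)}\bigr)$ — the lemma does not claim this, but every triangle containing a bridging arc feeds the shared upper label back into lower-boundary labels, so the propagation cannot close without it; second, the claim that all three labelings ``designate the same vertex of each triangle as the one reached last'' is exactly the order-of-determination statement that needs proof in the bi-infinite, upper-boundary-identified setting. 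For contrast, the paper avoids the recurrence entirely: it inducts on $n$ by cutting the triangulation at a \emph{second} special vertex $y$ (Corollary~\ref{corgluecutstrip}), observing that labels with starting vertex and target both away from $y$ are unchanged by removing the special triangle at $y$, handling the target $i=y$ via Remark~\ref{remlabelspecialvertex}, with an explicit base case at $n=2$. Your route, if the recurrence were established as a preliminary lemma, would actually be more uniform than the paper's (which splits on the number of special vertices per fundamental domain and leaves its second case to the reader), but as submitted the key lemma is missing and the proposal does not constitute a proof.
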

 
\begin{proof}
We proof the result by induction on $n$. If $n=1$, there is only the star-triangulation. For $n=2$, there is only one $2$-periodic triangulation $\mathcal{T}_2\ne \mathcal{T}_\ast$ of $\mathcal{U}_2$ up to the labeling of the vertices. W.l.o.g.\ we choose the quiddity sequence of $\mathcal{T}$ to be $q_{\mathcal{T}}=(3,1)$, see figure below.
 
\begin{center}
\scalebox{0.9}{\begin{tikzpicture}[font=\normalsize]

\draw (6.5,1.25) node [above,shift={(0.5cm,0cm)}] {$\cdots$};
\draw (-1.5,1.25) node [above,shift={(-0.5 cm,0cm)}] {$\cdots$};
\draw[semithick] (-1.5,1.25) to (6.5,1.25);

\node (a) at (-1,1.25) [fill,myblue,circle,inner sep=1pt] {};
\node (b) at (1,1.25) [fill,myblue,circle,inner sep=1pt] {};
\node (c) at (3,1.25) [fill,circle,inner sep=1pt] {};
\node (d) at (5,1.25) [fill,circle,inner sep=1pt] {};

\draw (a) node [myblue,above,shift={(0.25cm,0cm)}] {$0^{(-1)}$};
\draw (b) node [myblue,above,shift={(0.25cm,0cm)}] {$0^{(0)}$};
\draw (c) node [above,shift={(0.25cm,0cm)}] {$0^{(1)}$};
\draw (d) node [above,shift={(0.25cm,0cm)}] {$0^{(2)}$};

\foreach \x in {0,1,2,3,4,5,6} {
   \begin{scope}[shift={(\x cm, 0 cm)}]
    \node (\x) at (0,-1) [fill,circle,inner sep=1pt] {};
   \end{scope}
}

\draw (6.5,-1) node [below,shift={(0.5cm,-0.2cm)}] {$\cdots$};
\draw (-1.5,-1) node [below,shift={(-0.5 cm,-0.2 cm)}] {$\cdots$};
\draw[semithick] (-1.5,-1) to (6.5,-1);

\draw (0) node [myblue,below,shift={(0.25cm,0cm)}] {$1^{(-1)}$};
\draw (1) node [myblue,below,shift={(0.25cm,0cm)}] {$2^{(-1)}$};
\draw (2) node [myblue,below,shift={(0.25cm,0cm)}] {$1^{(0)}$};
\draw (3) node [below,shift={(0.25cm,0cm)}] {$2^{(0)}$};
\draw (4) node [below,shift={(0.25cm,0cm)}] {$1^{(1)}$};
\draw (5) node [below,shift={(0.25cm,0cm)}] {$2^{(1)}$};
\draw (6) node [below,shift={(0.25cm,0cm)}] {$1^{(2)}$};

%fundamenal region
\draw[opacity=0, fill=myblue,fill opacity=0.15] (2,-1) -- (1,1.25) -- (3,1.25) -- (4,-1) -- cycle;
\foreach \x in {0,1,2} {  
    \node  at (\x)  [myblue,fill,circle,inner sep=1pt] {};
}
\node at (a) [fill,myblue,circle,inner sep=1pt] {};
\node at (b) [fill,myblue,circle,inner sep=1pt] {};
\draw (a) node [myblue,shift={(3cm,-0.5cm)}] {$\mathcal{P}$};

%arcs
\draw[myblue,thin] (0) to (a);
\draw[myblue,thin] (2) to (b);
\draw[myblue,thin,out=50,in=130] (0) to (2);

\draw[thin] (4) to (c);
\draw[thin] (6) to (d);
\draw[thin,out=50,in=130] (2) to (4);
\draw[thin,out=50,in=130] (4) to (6);

\end{tikzpicture}}
\end{center}

\noindent In a fundamental domain there is only one special vertex, namely  $2^{(k)}$ for some $k\in\mathbb{Z}$. It is enough to show the the claim for $x^{(l)}=2^{(0)}$. One easily checks that $n_{2^{(0)}}\!\left(1^{(k)}\right)=\abs{2k-1}$ and $n_{2^{(0)}}\!\left(2^{(k)}\right)=\abs{4k}$ for all $k\in \mathbb{Z}$. Moreover, we have $n_{1^{(0)}}\!\left(1^{(k)}\right)=\abs{k}$ and $n_{1^{(0)}}\!\left(2^{(k)}\right)=\abs{2k+1}$ for all $k\in \mathbb{Z}$. Finally, $n_{1^{(1)}}\!\left(1^{(k)}\right)=\abs{k-1}$ and $n_{1^{(1)}}\!\left(2^{(k)}\right)=\abs{2k-1}$ for all $k\in \mathbb{Z}$. Hence, for $k\in\mathbb{Z}$, we have $n_{1^{(0)}}\!\left(1^{(k)}\right)+n_{1^{(1)}}\!\left(1^{(k)}\right)=\abs{2k-1}=n_{2^{(0)}}\!\left(1^{(k)}\right)$, and if $k\ne 0$, $n_{1^{(0)}}\!\left(2^{(k)}\right)+n_{1^{(1)}}\!\left(2^{(k)}\right)=\abs{4k}=n_{2^{(0)}}\!\left(2^{(k)}\right)$ as desired.

Now, we assume the claim holds for every $n$-periodic triangulation $\mathcal{T}_ n\ne \mathcal{T}_ \ast$ of $\mathcal{U}_n$. Let $\mathcal{T}_{n+1} \ne \mathcal{T}_ \ast$ be an $(n\!+\!1)$-periodic triangulation of $\mathcal{U}_{n+1}$. 

\begin{inparaenum}[\emph{\text{Case}} 1:] 
\item There are at least two special vertices in a fundamental domain of $\mathcal{T}_{n+1}$. In this case, we choose a special vertex $x^{(l)}$ for $\mathcal{T}_{n+1}$. Let $y^{(l)}\ne x^{(l)}$ be another special vertex for $\mathcal{T}_{n+1}$  and consider the $n$-periodic triangulation $\mathcal{T}_n=\mathcal{T}_{n+1\setminus y}$ of $\mathcal{U}_n$ (Corollary~\ref{corgluecutstrip}). Now, let $i^{(k)}\ne x^{(l)}$ be any vertex  on the lower boundary of $\mathcal{U}_{n+1}$. 

\begin{inparaenum}[-] 
\item If $i\ne y$, we have the corresponding vertex $i_y^{(k)}$ on the lower boundary of $\mathcal{U}_n$, where $i_y=i$ if $1\le i< y$ and $i_y=i-1$ if $y<i \le n+1$ (as in Corollary~\ref{corgluecutstrip}). Similarly, $x_y^{(l)}$ is the vertex on the lower boundary of $\mathcal{U}_n$ corresponding to $x_y^{(l)}$. Clearly, $i_y^{(k)}\ne x_y^{(l)}$. Now, since both $x$ and $i$ are different from $y$, we have $n_{x^{(l)}}\!\left({i}^{(k)}\right)=n_{x_y^{(l)}}\!\left(i_y^{(k)}\right)$. Moreover, $x$ is not a neighbor of $y$, so similarly
$$n_{{(x-1)}^{(l)}}\!\left({i}^{(k)}\right)=n_{(x_y-1)^{(l)}}\!\left(i_y^{(k)}\right), n_{{(x+1)}^{(l)}}\!\left( {i}^{(k)}\right)=n_{(x_y+1)^{(l)}}\!\left( i_y^{(k)}\right).$$
By using this and the inductive hypothesis, we get 
$$n_{x^{(l)}}\!\left({i}^{(k)}\right)=n_{(x_y-1)^{(l)}}\!\left(i_y^{(k)}\right)+n_{(x_y+1)^{(l)}}\!\left( i_y^{(k)}\right)=n_{{(x-1)}^{(l)}}\!\left({i}^{(k)}\right)+n_{{(x+1)}^{(l)}}\!\left( {i}^{(k)}\right).$$

\item Otherwise, if $i=y$, then $i^{(k)}=y^{(k)}$ is a special vertex for $\mathcal{T}_{n+1}$. Since $x\ne y$, Remark~\ref{remlabelspecialvertex} tells us that
$$n_{x^{(l)}}\!\left(y^{(k)}\right)=n_{{x}^{(l)}}\!\left((y-1)^{(k)}\right)+n_{{x}^{(l)}}\!\left( (y+1)^{(k)}\right).$$
Using the result for $i\ne y$ shown above, it follows that
\begin{align*}
n_{x^{(l)}}\!\left(y^{(k)}\right)=&\; n_{{(x-1)}^{(l)}}\!\left((y-1)^{(k)}\right)+n_{{(x+1)}^{(l)}}\!\left((y-1)^{(k)}\right)+n_{{(x-1)}^{(l)}}\!\left( (y+1)^{(k)}\right)\\
&+n_{{(x+1)}^{(l)}}\!\left( (y+1)^{(k)}\right)=n_{{(x-1)}^{(l)}}\!\left(y^{(k)}\right)+n_{{(x+1)}^{(l)}}\!\left(y^{(k)}\right),
\end{align*}
the latter follows again by Remark~\ref{remlabelspecialvertex}.
\end{inparaenum}

\item If there is only one special vertices in a fundamental domain of $\mathcal{T}_{n+1}$, they are all. In this case, the proof works similar as in the induction step and we leave it to the reader to check the details.
\end{inparaenum}
 \end{proof}
 
The next result shows that this labeling algorithm provides all entries occurring in an  \mbox{$n$-arithmetic} frieze $\mathcal{F}_n$ associated to a given \mbox{$n$-periodic} triangulation of $\mathcal{U}_n$, and hence to every triangulation of $S_n^1$.

\begin{thm}\label{thmaltderscription}
Let $\mathcal{T}_n$ be an $n$-periodic triangulation of $\mathcal{U}_n$ with associated $n$-arithmetic frieze\linebreak $\mathcal{F}_{\mathcal{T}_n}=(m_{ij})_{j-i\ge -2}$. Then 
$$m_{ij}=n_{(i-1)^{(k)}}\!\left((j+1)^{(k)}\right).$$
\end{thm}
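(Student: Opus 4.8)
The plan is to prove the identity by induction on $n$, following the same scheme as in the proofs of Theorem~\ref{thmmatchings} and Lemma~\ref{lemlabelspecialvertex}: first settle the star-triangulation for every $n$, and then pass from an $(n\!+\!1)$-periodic triangulation to the $n$-periodic one obtained by cutting a special triangle. For the star-triangulation $\mathcal{T}_\ast$ of $\mathcal{U}_n$ the associated frieze is the basic one, $m_{ij}=j-i+2$ (Figures~\ref{figbstartriangulationstrip} and \ref{figbasicfrieze}). Starting the labeling at $v=(i-1)^{(k)}$ one finds $n_v\!\left(v\right)=0$, $n_v\!\left(0^{(k')}\right)=1$ for all $k'$, and, since every lower vertex is joined to the upper vertex, the triangles $\{(i-1+t)^{(k)},(i+t)^{(k)},0^{(k)}\}$ force $n_v\!\left((i-1+t)^{(k)}\right)=t$ by successive sums. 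Taking $t=j+2-i$ gives $n_{(i-1)^{(k)}}\!\left((j+1)^{(k)}\right)=j-i+2=m_{ij}$, which also covers the two trivial rows $j=i-1$ (label $1$) and $j=i-2$ (label $0$).

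For the inductive step I take $\mathcal{T}_{n+1}\neq\mathcal{T}_\ast$ with special vertex $x^{(k)}$ (Corollary~\ref{corspecialvertex}) and set $\mathcal{T}_n=\mathcal{T}_{n+1\setminus x}$, writing $\check n_v$ and $\check m$ for the labels and the frieze entries of $\mathcal{T}_n$. By Corollary~\ref{corspecialvertexfriezeequivalce}, $\mathcal{F}_{\mathcal{T}_{n+1}}$ is obtained from $\mathcal{F}_{\mathcal{T}_n}$ by $n$-gluing above $(\check a_{x-1},\check a_x)$, so Corollary~\ref{cornglueperiodicfrieze} (with $k=x-1$) expresses every $m_{ij}$ through the $\check m$ in four cases governed by whether $i\equiv x+1$ and whether $j\equiv x-1$ modulo $n+1$. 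The key observation is that the very same four cases arise on the labeling side: the starting vertex $(i-1)^{(k)}$ is a copy of the special vertex precisely when $i\equiv x+1$, and the evaluation vertex $(j+1)^{(k)}$ is a copy of the special vertex precisely when $j\equiv x-1$. When the evaluation vertex is such a copy I split it by Remark~\ref{remlabelspecialvertex}, namely $n_v\!\left((j+1)^{(k)}\right)=n_v\!\left(j^{(k)}\right)+n_v\!\left((j+2)^{(k)}\right)$; when the starting vertex is such a copy I split it by Lemma~\ref{lemlabelspecialvertex}, namely $n_{(i-1)^{(k)}}\!\left(w\right)=n_{(i-2)^{(k)}}\!\left(w\right)+n_{(i)^{(k)}}\!\left(w\right)$. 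Thus each of the four cases produces exactly one, two, or four label terms, matching term for term the one, two, or four summands of Corollary~\ref{cornglueperiodicfrieze}.

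It then remains to identify each label term in $\mathcal{T}_{n+1}$ with the corresponding $\check m$ via the induction hypothesis $\check m_{pq}=\check n_{(p-1)}\!\left((q+1)\right)$. For this I use two facts. First, a vertex other than a copy of $x$ receives the same label in $\mathcal{T}_{n+1}$ and in $\mathcal{T}_n$: the special vertex lies in a single triangle, so its label is a leaf of the propagation and never feeds back, while the triangle carrying the arc $(x-1)(x+1)$ in $\mathcal{T}_{n+1}$ carries the boundary segment $(x-1)(x+1)$ in $\mathcal{T}_n$ and propagates the identical sum-relation. Second, the index reductions are compatible, namely $(i-1)_x=i_{x+1}-1$ and $(j+1)_x=j_{x-1}+1$, which one checks directly from the definition $i_x=i-t$ for $x+(t-1)(n+1)<i\le x+t(n+1)$. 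Combining these, every label term equals $\check n_{(i_{x+1}-1)}\!\left((j_{x-1}+1)\right)=\check m_{i_{x+1}j_{x-1}}$ (and its neighbours in the obvious way), so the totals agree with $m_{ij}$ and the induction closes.

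The main obstacle I anticipate is bookkeeping rather than conceptual: one must verify that cutting the special triangle leaves all other labels untouched (the leaf property above, which implicitly relies on the labeling being well defined independently of the propagation order) and must track the reductions $i_{x+1}$ and $j_{x-1}$ carefully so that the four label terms land on exactly the same arguments as the four frieze terms of Corollary~\ref{cornglueperiodicfrieze}. An alternative, essentially equivalent, route would bypass the frieze entries altogether and instead prove directly that $n_{(i-1)^{(k)}}\!\left((j+1)^{(k)}\right)=\bigl|\mathcal{M}_{i^{(k)}j^{(k)}}\bigr|$ by the same four-case induction, since Remark~\ref{remlabelspecialvertex} and Lemma~\ref{lemlabelspecialvertex} mirror exactly the gluing recursion for matching numbers in Lemma~\ref{lemmatchingsnglue}; the claim would then follow from Theorem~\ref{thmmatchings}.
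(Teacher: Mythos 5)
Your proposal is correct and follows essentially the same route as the paper's proof: induction on $n$ with the star-triangulation as base case, cutting at a special vertex, the four-case analysis via Corollary~\ref{cornglueperiodicfrieze} (with $k=x-1$), the splitting rules of Remark~\ref{remlabelspecialvertex} (evaluation vertex special) and Lemma~\ref{lemlabelspecialvertex} (starting vertex special), and the same index compatibilities $(i-1)_x=i_{x+1}-1$ and $(j+1)_x=j_{x-1}+1$ together with the fact that labels at non-special vertices are unchanged by cutting. The only cosmetic deviation is in the case $i\equiv x+1$, $j\equiv x-1$, where the paper sidesteps the four-summand expansion by applying Corollary~\ref{cordiagonalsum} (using $a_x=1$) to reduce to the previous case, whereas you match all four summands term by term; both versions close the induction.
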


\begin{proof}
It is enough to show the claim for a fundamental domain. W.l.o.g.\ we choose $k$ to be $0$. First we show the claim for the star-triangulation $\mathcal{T}_\ast$ of $\mathcal{U}_n$ with basic infinite frieze $\mathcal{F}_\ast=(m_{ij})_{j-i\ge -2}$, where $m_{ij}=j-i+2$. Let $i^{(0)}$ be a fixed vertex on the lower boundary of $\mathcal{U}_n$, $i\in\{1,2,\dots, n\}$. Then we have $n_{i^{(0)}}\!\left(i^{(0)}\right)=0=m_{i+1,i-1}$, $n_{i^{(0)}}\!\left((i-1)^{(0)}\right)=n_{i^{(0)}}\!\left((i+1)^{(0)}\right)=1=m_{i+1,i}$ and $n_{i^{(0)}}\!\left(0^{(k)}\right)=1$ for all $k\in\mathbb{Z}$. Recall, for $j=k+ln$ with $k\in\{1,2,\dots,n\}$ and $l\in\mathbb{Z}$, we actually have $j^{(0)}=k^{(l)}$. Clearly, for $j\in \mathbb{N}$ such that $j \ge i+1$, we have $n_{i^{(0)}}\!\left(j^{(0)}\right)=n_{i^{(0)}}\!\left((j-1)^{(0)}\right)+n_{i^{(0)}}\!\left(0^{(0)}\right)$, and inductively $n_{i^{(0)}}\!\left(j^{(0)}\right)=j-i=m_{i+1,j-1}$ for $i+1\in\{2,3,\dots,n+1\}$ and $j-1\ge i+1$. Hence the claim holds for star-triangulations, in particular, for triangulations of $\mathcal{U}_1$.

For the remaining triangulations of $\mathcal{U}_n$, we proceed by induction on $n$ and assume the claim is true for $n\geq 1$. Let $\mathcal{T}_{n+1}\ne \mathcal{T}_\ast$ be an \mbox{$(n\!+\!1)$-periodic} triangulation of $\mathcal{U}_{n+1}$ with quiddity sequence $q_{\mathcal{T}_{n+1}}=(a_1,a_2,\dots,a_{n+1})$ and associated $(n\!+\!1)$-arithmetic frieze $\mathcal{F}_{\mathcal{T}_{n+1}}=(m_{ij})_{j-i\ge -2}$. We know that $\mathcal{T}_{n+1}$ contains a special vertex $x^{(0)}$ (Corollary~\ref{corspecialvertex}) and $\mathcal{T}_n:=\mathcal{T}_{n+1\setminus{x}}$ is an $n$-periodic triangulation of $\mathcal{U}_{n}$ with quiddity sequence $q_{\mathcal{T}_n}=(\check a_1,\check a_2,\dots,\check a_n)$ as given in Corollary~\ref{corgluecutstrip}. Let $\mathcal{F}_{\mathcal{T}_n}=(\check m_{ij})_{i\geq 0,j \in \mathbb{Z}}$ denote the $n$-arithmetic frieze associated to $\mathcal{T}_n$ and recall that $\mathcal{F}_{\mathcal{T}_{n+1}}$ is obtained from $\mathcal{F}_{\mathcal{T}_n}$ by $n$-gluing above $(\check a_{x-1},\check a_x)$ in $\mathcal{F}_{\mathcal{T}_n}$ (Corollary~\ref{corspecialvertexfriezeequivalce}).

Using the notation of Corollary~\ref{cornglueperiodicfrieze}, for $i\ne x$, $i_x^{(k)}$ denotes the vertex of $\mathcal{U}_n$ corresponding to the vertex $i^{(k)}$ of $\mathcal{U}_{n+1}$. To avoid confusion, we denote the attached labels for $\mathcal{T}_{n+1}$ and $\mathcal{T}_n$ respectively by $n_{\ast}(-)$ and $\check n_{\ast}(-)$. For two vertices $i^{(k)},j^{(l)}$ on the lower boundary of $\mathcal{U}_{n+1}$, it is easy to see that
$$n_{i^{(k)}}\!\left((j^{(l)}\right)=\check n_{i_x^{(k)}}\!\left(j_x^{(l)}\right)\quad \text{if } i,j\ne x.$$

By definition, for $i\in\{1,2,\dots,n+1\}$, we have $m_{i,i-2}=0=n_{(i-1)^{(0)}}\!\left((i-1)^{(0)}\right)$ and $ m_{i,i-1}=1=n_{(i-1)^{(0)}}\!\left(i^{(0)}\right)$. Now, we consider $m_{ij}$ in $\mathcal{F}_{\mathcal{T}_{n+1}}$ with $i\in\{1,2,\dots,n+1\}$ and $j\ge i$. We apply Corollary~\ref{cornglueperiodicfrieze} (for $k=x-1$) and distinguish the following four cases as in the proof of Theorem~\ref{thmmatchings}.

\begin{inparaenum}[\emph{\text{Case }}1:] 
\item Suppose $i\not\equiv x+1$ and $j\not \equiv  x-1$. Using the indcutvie hypothesis, we get $m_{ij}=\check m_{i_{x+1},j_{x-1}}=\check n_{(i_{x+1}-1)^{(0)}}\!\left((j_{x-1}+1)^{(0)}\right)=\check n_{(i-1)_x^{(0)}}\!\left((j+1)_x^{(0)}\right)=n_{(i-1)^{(0)}}\!\left((j+1)^{(0)}\right) $ as desired.

\item Let $i\not \equiv  x+1$ and $j\equiv x-1$. In this case, by induction, we get  $m_{i,x-1}=\check m_{i_{x+1},x-2}+\check m_{i_{x+1},x-1}=\check n_{(i-1)_x^{(0)}}\!\left((x-1)^{(0)}\right)+\check n_{(i-1)_x^{(0)}}\!\left(x^{(0)}\right)=n_{(i-1)^{(0)}}\!\left((x-1)^{(0)}\right)+n_{(i-1)^{(0)}}\!\left((x+1)^{(0)}\right)=n_{(i-1)^{(0)}}\!\left(x^{(0)}\right)$, where the latter follows by Ramark~\ref{remlabelspecialvertex} since $(i-1)^{(0)}\ne x^{(l)}$, as otherwise we would get $j=i-1$, a contradiction to $j\ge i$.

\item Let $i\equiv x+1$ and $j\not \equiv x-1$. Closely analogous to Case $2$, using Lemma~\ref{lemlabelspecialvertex} instead of Ramark~\ref{remlabelspecialvertex}, we have $m_{x+1,j}=\check m_{x,j_{x-1}}+\check m_{x+1,j_{x-1}}=\check n_{(x-1)^{(0)}}\!\left((j+1)_x^{(0)}\right)+\check n_{x^{(0)}}\!\left((j+1)_x^{(0)}\right)= n_{(x-1)^{(0)}}\!\left((j+1)^{(0)}\right)+ n_{(x+1)^{(0)}}\!\left((j+1)^{(0)}\right)= n_{x^{(0)}}\!\left((j+1)^{(0)}\right)$, where $x^{(0)}\ne (j+1)^{(0)}$ since \mbox{$j\ne x-1$}.

\item Finally, if $i\equiv x+1$ and $j\equiv x-1$, we apply Corollary~\ref{cordiagonalsum}, Case~$3$ and Remark~\ref{remlabelspecialvertex} and it follows that
$m_{x+1,x-1}=m_{x+1,x-2}+m_{x+1,x}=n_{x^{(0)}}\!\left((x-1)^{(0)}\right)+n_{x^{(0)}}\!\left((x+1)^{(0)}\right)=n_{x^{(0)}}\!\left(x^{(l)}\right)$.
\end{inparaenum}
This completes the proof.
\end{proof}

Clearly, we can apply the labeling algorithm to an $n$-periodic triangulation $\mathcal{T}_n$ of $\mathcal{U}_n$ for a starting vertex $v=0^{(k))}$ with $k\in\mathbb{Z}$ on the upper boundary of $\mathcal{U}_n$. Then the labels on the lower boundary provide an $n$-periodic sequence determined by $(n_1,n_2,\dots, n_n)$, where $n_i=n_v\!\left(i^{(k)}\right)$. Note that this sequence does not depend on the choice of $k$.

\begin{prop}\label{propcommondifferences}
Let $\mathcal{T}$ be an $n$-periodic triangulation of $\mathcal{U}_n$ with $1\leq r\leq n$ bridging arcs, up to translates. Let $\mathcal{F}_n=(m_{ij})_{j-1\ge -2}$ be the $n$-arithmetic frieze associated to $\mathcal{T}$ with common differences $d_{ik}=m_{i,(i+k-3)+n}-m_{i,i+k-3}$. Then $d_{ik}=r n_{i-1} n_{i+k-2}$, where $i-1$ and $i+k-2$ are reduced modulo $n$. 
\end{prop}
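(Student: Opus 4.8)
The plan is to prove the formula by induction on $n$, following the same cut-and-glue strategy used for Proposition~\ref{proptriangulationfriezearithmetic} and Theorem~\ref{thmaltderscription}, and to feed the four-case decomposition of the common differences established there into the upper-boundary labels $n_i=n_{0^{(k)}}\!\left(i^{(k)}\right)$. For the base case I take the star-triangulation $\mathcal{T}_\ast$ of $\mathcal{U}_n$: here $r=n$, every lower vertex is joined to the puncture by a bridging arc, so all upper-based labels are $n_i=1$, and the associated frieze is $\mathcal{F}_\ast$ with $m_{ij}=j-i+2$, giving $d_{ik}=n=r\,n_{i-1}n_{i+k-2}$, in particular for $n=1$.

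For the inductive step I pick, by Corollary~\ref{corspecialvertex}, a special vertex $x$ of a triangulation $\mathcal{T}_{n+1}\ne\mathcal{T}_\ast$ and cut it to obtain $\mathcal{T}_n=\mathcal{T}_{n+1\setminus x}$. Two preliminary observations are needed. First, the single triangle at a special vertex is the peripheral ear $(x-1,x,x+1)$ (its two boundary segments must bound the same triangle, whose third side is the arc $(x-1)(x+1)$), so cutting it removes no bridging arc; hence $r$ is the same for $\mathcal{T}_{n+1}$ and $\mathcal{T}_n$. Second, writing $n_i$ and $\check n_i$ for the upper-based labels of $\mathcal{T}_{n+1}$ and $\mathcal{T}_n$, the ear is a leaf of the labelling process, so $n_i=\check n_{i_x}$ for $i\not\equiv x$, while Remark~\ref{remlabelspecialvertex} gives $n_x=n_{x-1}+n_{x+1}$ (note $x$ is never the upper starting vertex).

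Now I combine these with the four-case formula of Corollary~\ref{cornglueperiodicfrieze}, used exactly as in the proof of Proposition~\ref{proptriangulationfriezearithmetic} (for $k=x-1$). Writing $a=i-1$ and $b=i+k-2$, the condition $i\equiv x+1$ reads $a\equiv x$ and the condition $k\equiv x-i+2$ reads $b\equiv x$ (modulo $n+1$); moreover, an entry of $\mathcal{F}_{\mathcal{T}_n}$ in {\sc se}-diagonal $i'$ and column $j'$ carries, by the inductive hypothesis, the common difference $r\,\check n_{i'-1}\check n_{j'+1}$, since $i'+k'-2=j'+1$. In the generic case $a,b\not\equiv x$ the entry is unchanged and $d_{ik}=r\,\check n_{a_x}\check n_{b_x}=r\,n_an_b$. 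In the case $b\equiv x$ the entry is a sum of two entries of $\mathcal{F}_{\mathcal{T}_n}$ whose column labels are the two neighbours of the deleted vertex, giving
\[
d_{ik}=r\,\check n_{a_x}(\check n_{x-1}+\check n_{x})=r\,n_a(n_{x-1}+n_{x+1})=r\,n_an_x=r\,n_an_b,
\]
and the case $a\equiv x$ is symmetric. Finally, when $a\equiv x\equiv b$ the entry splits into four, producing $r(\check n_{x-1}+\check n_{x})^2=r\,n_x^2=r\,n_an_b$. In every case $d_{ik}=r\,n_{i-1}n_{i+k-2}$, completing the induction.

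The main obstacle is the index bookkeeping in the last paragraph: one must check that, under the reductions $i_{x+1}$ and $j_{x-1}$ of Corollary~\ref{cornglueperiodicfrieze}, the base index $i_{x+1}-1$ of each contributing entry equals $a_x=(i-1)_x$, and that the two (respectively four) column labels that occur are precisely $\check n_{x-1}$ and $\check n_{x}$ — the labels of the two vertices flanking the reinserted special vertex — so that their sum collapses to $n_x$ via Remark~\ref{remlabelspecialvertex}. This is the same \emph{tedious but straightforward} congruence-chasing modulo $n+1$ as in Lemma~\ref{lemmatchingsnglue}; the entire conceptual content is carried by the single identity $n_x=n_{x-1}+n_{x+1}$ together with the four-case split already available from Proposition~\ref{proptriangulationfriezearithmetic}.
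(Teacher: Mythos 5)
Your proof is correct and follows essentially the same route as the paper's: induction on $n$ by cutting a special vertex $x$, base case the star-triangulation (where $r=n$ and all $n_i=1$), the four-case decomposition of the common differences via Corollary~\ref{cornglueperiodicfrieze} (for $k=x-1$) exactly as in Proposition~\ref{proptriangulationfriezearithmetic}, and the key label identities $n_i=\check n_{i_x}$ and $n_x=n_{x-1}+n_{x+1}$ from Remark~\ref{remlabelspecialvertex}. The only deviation is the final case $i\equiv x+1$, $k\equiv x-i+2$: you expand the entry directly into four common differences of $\mathcal{F}_{\mathcal{T}_n}$, obtaining $r\left(\check n_{x-1}+\check n_x\right)^2=r\,n_x^2$, whereas the paper invokes Corollary~\ref{cordiagonalsum} (using $a_x=1$) to write $d_{ik}=d_{i,k-1}+d_{i,k+1}$ and reduce to the preceding case --- both are valid, and your variant simply mirrors the four-term split the paper itself used when proving the arithmetic property.
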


\begin{proof}
Let $\mathcal{F}_n$ be the basic frieze with star-triangulation $\mathcal{T}_\ast$ of $\mathcal{U}_n$. Then we have $r=n$ and $n_i=1$ for all $i$, so $d_{ik}=n$ as desired. In particular, the claim holds if $n=1$. We proceed by induction on $n$ and assume that the claim is true for $n\ge1$. We consider an $(n\!+\!1)$-periodic triangulation $\mathcal{T}_{n+1}$ ($\ne \mathcal{T}_\ast$) of $\mathcal{U}_{n+1}$ with $1\leq r\leq n$ bridging arcs, up to translates, and quiddity sequence $q_{\mathcal{T}_{n+1\setminus x}}=(a_1,a_2,\dots.a_{n+1}).$ Let $\mathcal{F}_{n+1}=(m_{ij})_{j-i\ge -2}$ be the $(n\!+\!1)$-arithmetic frieze associated to $\mathcal{T}_{n+1}$. By assumption, there exists a special vertex $x^{(0)}$ in $\mathcal{T}_{n+1}$ (Corollary~\ref{corspecialvertexfriezeequivalce}) and $\mathcal{T}_n=\mathcal{T}_{n+1\setminus x}$ is an $n$-periodic triangulation of $\mathcal{U}_n$ with quiddity sequence $q_{\mathcal{T}_n}=(\check a_1,\check a_2,\dots,\check a_n)$ (Corollary~\ref{corgluecutstrip}) and associated $n$-arithmetic frieze $\mathcal{F}_n=(\check m_{ij})_{i\geq 0,j \in \mathbb{Z}}$. Clearly, the number of bridging arcs, up to translates, stays unchanged, so $\mathcal{T}_n$ has also $r$ bridging arcs. Let $n_\ast$ and $\check n_\ast$ denote the labels attached to $\mathcal{T}_{n+1}$ and $\mathcal{T}_n$, respectively. Then $n_x=n_{x-1}+n_{x+1}$ and $n_i=\check n_{i_x}$ for $i\in\{1,2,\dots n+1\}\setminus\{x\}$ (notation of Corollary~\ref{cornglueperiodicfrieze}). Corollary~\ref{corspecialvertexfriezeequivalce} implies that $n$-gluing above $(\check a_{x-1},\check a_x)$ in $\mathcal{F}_n$ leads back to $\mathcal{F}_{n+1}$ and we can use Corollary~\ref{cornglueperiodicfrieze} (for $k=x-1$) to express the entries in $\mathcal{F}_{n+1}$ in terms of entries in $\mathcal{F}_n$. In particular, we can express the common differences $d_{ik}$ for $\mathcal{F}_{n+1}$ by the common differences $\check d_{ik}$ for $\mathcal{F}_n$ (as in the proof of Proposition \ref{proptriangulationfriezearithmetic}). As usual, we must  check the following four separate cases. 

\begin{inparaenum}[\emph{\text{Case }}1:] 
\item If $i\not\equiv x+1$ and $k\not\equiv x-i+2$ (modulo $n+1$), we have $d_{ik}=\check d_{i_{x+1}\check k}$, where $\check k=(i+k-3)_{x-1}-i_{x+1}+3$. By our inductive hypothesis, we deduce $d_{ik}=r\check n_{i_{x+1}-1} \check n_{(i+k-3)_{x-1}+1}=r\check n_{(i-1)_x} \check n_{(i+k-2)_x}=r n_{i-1} n_{i+k-2}$.

\item Let $i\not \equiv  x+1$ and $k\equiv x-i+2$. In this case, we have $d_{ik}=\check d_{i_{x+1}\check k_1+}+\check d_{i_{x+1}\check k_2}$ with $\check k_1=(i+k-3)_{x-1}-i_{x+1}+2$ and $\check k_2=(i+k-3)_{x-1}-i_{x+1}+3$. By induction, it follows that $d_{ik}=r\check n_{i_{x+1}-1} \check n_{(i+k-3)_{x-1}}+r\check n_{i_{x+1}-1} \check n_{(i+k-3)_{x-1}+1}=r\check n_{(i-1)_x}\left(\check n_{(x-1)_{x-1}}+\check n_{(x-1)_{x-1}+1} \right)=r n_{i-1}\left(n_{x-1}+ n_{x+1} \right)=r n_{i-1}n_x$, where $x\equiv i+k-2$.

\item Suppose $i\equiv  x+1$ and $k\not\equiv x-i+2$. Then $d_{ik} =\check d_{i_{x+1}-1,\check k}+\check d_{i_{x+1}\check k}$ with $\check k=(i+k-3)_{x-1}-i_{x+1}+3$, so by the inductive hypothesis $d_{ik}=r(\check n_{i_{x+1}-2}+\check n_{i_{x+1}-1}) \check n_{(i+k-3)_{x-1}+1}=r(\check n_{x-1}+\check n_x) \check n_{(i+k-2)_x}=r(n_{x-1}+n_{x+1}) n_{i+k-2}=r n_{i-1} n_{i+k-2}$.

\item Finally, let $i\equiv  x+1$ and $k\equiv 1$. In this case we use Corollary~\ref{cordiagonalsum} and deduce that $d_{ik}=d_{i,k-1}+d_{i,k+1}$. By Case $3$, we obtain $d_{ik}=r n_{i-1}\left(n_{x-1}+n_{x+1}\right)=r n_{i-1}n_{i+k-2}$ as desired.
\end{inparaenum}
\end{proof}

\subsection*{Acknowledgements}
A special gratitude goes to Karin Baur who gave many inputs and shared various valuable comments for the present article. It is also a pleasure to thank the members of the research group for Algebra and Number Theory at the Institute for Mathematics and Scientific Computing of the University of Graz for several hours of discussions. Finally, I gratefully acknowledge support from NAWI Graz.

\textsc{ University of Graz, Institute for mathematics and scientific computing, Heinrichstrasse 36, 8010 Graz, Austria.}

\emph{E-mail address:}
\email{manuela.tschabold@uni-graz.at}

\end{document}